\newtheorem{thm}{Theorem}[section]
\newtheorem{prop}[thm]{Proposition}
\newtheorem{lemma}[thm]{Lemma}
\newtheorem{cor}[thm]{Corollary}
\newtheorem{prob}[thm]{Problem}
\newtheorem{observation}[thm]{Observation}
\theoremstyle{definition}
\newtheorem{defi}[thm]{Definition}
\def\cover{{\bf COVER}}
\def\factor{{\bf FACTOR}}
\def\trans{{\bf TRANS}}
\tikzstyle{aNode} = [circle, fill = black]
\tikzstyle{bNode} = [circle,draw = black, thick]
\newcommand{\pcherry}[1]{%
\begin{tikzpicture}[inner sep = 0.7pt, #1]%
\node (1) at (0,-2) [aNode]{};
\node (3) at (1.5,-2) [aNode]{};
\node (2) at (0.75,-1) [aNode]{};
\draw  (1) -- (2);
\draw  (2) -- (3);
\end{tikzpicture}%
}
\newcommand{\ppoints}[1]{%
\begin{tikzpicture}[inner sep = 0.7pt, #1]%
\node (1) at (0,-2) [aNode]{};
\node (3) at (1.5,-2) [aNode]{};
\node (2) at (0.75,-1) [aNode]{};
\end{tikzpicture}%
}
\newcommand{\pedge}[1]{%
\begin{tikzpicture}[inner sep = 0.7pt, #1]%
\node (1) at (0,-2) [aNode]{};
\node (3) at (1.5,-2) [aNode]{};
\node (2) at (0.75,-1) [aNode]{};
\draw  (1) -- (3);
\end{tikzpicture}%
}
\def\cherry{\pcherry{scale=0.11}}
\def\points{\ppoints{scale=0.11}}
\def\edge{\pedge{scale=0.14}}
\newcounter{casenum}
\newcommand*{\rom}[1]{\expandafter{\romannumeral #1\relax}}
\begin{document}
\title[Tiling multipartite hypergraphs in Quasi-random Hypergraphs]{Tiling multipartite hypergraphs in Quasi-random Hypergraphs}
\date{\today}
\author{Laihao Ding}
\author{Jie Han}
\author{Shumin Sun}
\author{Guanghui Wang}
\author{Wenling Zhou}

\address
{School of Mathematics and Statistics, Central China Normal University,
Wuhan, China.}
\thanks{LHD is supported by the National Natural Science Foundation of China (11901226).
SMS, GHW and WLZ are partially supported by the Natural Science Foundation of China  (11871311, 12231018) and Shandong University multidisciplinary research and innovation team of young scholars.}
\email[Laihao Ding]{dinglaihao@mail.ccnu.edu.cn}
\address
{School of Mathematics and Statistics, Center for Applied Mathematics, Beijing Institute of Technology, Beijing, China.}
\email[Jie Han]{han.jie@bit.edu.cn}
\address
{School of Mathematics, Shandong University, Jinan, China.}
\email[Shumin Sun]{sunshumin987@163.com}
\email[Guanghui Wang]{ghwang@sdu.edu.cn}
\address
{School of Mathematics, Shandong University,
	Jinan, China, and  Laboratoire Interdisciplinaire des Sciences du Num\'{e}rique, CNRS-Universit\'{e} Paris-Saclay, Orsay, France.}
\email[Wenling Zhou]{gracezhou@mail.sdu.edu.cn}
\date{}
\keywords {multipartite hypergraph, quasi-random hypergraph, $F$-factor, absorbing method}

\begin{abstract}
Given $k\ge 2$ and two $k$-graphs ($k$-uniform hypergraphs) $F$ and $H$, an \emph{$F$-factor} in $H$ is a set of vertex disjoint copies of $F$ that together covers the vertex set of $H$. Lenz and Mubayi
studied the $F$-factor problems in quasi-random $k$-graphs with minimum degree $\Omega(n^{k-1})$. In particular, they constructed a sequence of $1/8$-dense quasi-random $3$-graphs $H(n)$ with minimum degree $\Omega(n^2)$ and minimum codegree $\Omega(n)$ but with no $K_{2,2,2}$-factor. 
We prove that if $p>1/8$ and $F$ is a $3$-partite $3$-graph with $f$ vertices, then for sufficiently large $n$, all $p$-dense quasi-random $3$-graphs of order $n$ with minimum codegree $\Omega(n)$ and $f\mid n$ have $F$-factors. 
That is, $1/8$ is the density threshold for ensuring all $3$-partite $3$-graphs $F$-factors  in quasi-random $3$-graphs given a minimum codegree condition $\Omega(n)$. Moreover, we show that one can not replace the minimum codegree condition by a minimum vertex degree condition. In fact, we find that for any $p\in(0,1)$ and $n\ge n_0$, there exist $p$-dense quasi-random $3$-graphs of order $n$ with minimum degree $\Omega (n^2)$ having no $K_{2,2,2}$-factor. In particular, we study the optimal  density threshold of $F$-factors for each $3$-partite $3$-graph $F$ in quasi-random $3$-graphs given a minimum codegree condition $\Omega(n)$.

In addition, we also  study $F$-factor problems for 
$k$-partite $k$-graphs $F$ with stronger quasi-random assumption and minimum degree $\Omega(n^{k-1})$.
\end{abstract}

\maketitle

\section{Introduction}
For a positive integer $\ell$, we denote by $[\ell]$ the set $\{1,\dots,\ell\}$.
Given $k\geq 2$, for a finite set $V$, we use  $[V]^k$ to denote the collection of all subsets of $V$ of size $k$.
A \textit{$k$-uniform hypergraph} $H$ ($k$-graph for short) is a pair $H=(V(H),E(H))$ where $V(H)$ is a finite set of \textit{vertices} and $E(H)\subseteq [V(H)]^k$ is a set of \textit{edges}. As usual $2$-graphs are simply called graphs.
We simply write $v(H)=|V(H)|$ and $e(H)=|E(H)|$. Let $\partial H:=\{S\in [V(H)]^{k-1}: \exists \  e\in E(H), S\subseteq e\}$ be the \emph{shadow} of $H$.
Given a $k$-graph $F$, a \textit{$k$-partite realisation} of $F$ is a partition of $V(F)$ into vertex classes $V_1, \dots, V_k$ so that for any $e\in E(F)$ and $i\in [k]$ we have $|e\cap V_i| =1$. Clearly, we must have $|V_i| \ge 1$ for every $i\in [k]$. In particular, we say that $F$ is \textit{$k$-partite} if it admits a $k$-partite realisation.
Given a $k$-graph $H=(V, E)$ and a set $S$ of $s$ vertices in $V$ with $1\le s\le k-1$, let $\deg_H(S)$ (or $\deg(S)$) denote the number of edges of $H$ containing $S$ as a subset. Let $N_H(S)$ (or $N(S)$) denote the neighbor set of $S$, i.e., $N_H(S)=\{ S' \in [V]^{k-s}\ : S'\cup S\in E\}$.
The \textit{minimum $s$-degree} $\delta_s(H)$ is the minimum of $\deg(S)$ over all $s$-subsets $S$ of $V$. In particular, we call the minimum $1$-degree and the minimum $(k-1)$-degree of $H$ the \textit{minimum degree} and \textit{minimum codegree} of $H$ respectively. If $S=\{v\}$ is a singleton, then we simply write $\deg(v)$ and $N(v)$  instead.

In this paper, we focus on a classical extremal problem, the perfect tiling problem.
Given two $k$-graphs $F$ and $H$, a \textit{perfect $F$-tiling (or $F$-factor)} in $H$ is a set of vertex disjoint copies of $F$ that together covers the vertex set of $H$.
Furthermore, if $H$ does not contain copies of $F$, we say $H$ is \textit{$F$-free}.
The study of perfect tilings in graph theory has a long and profound history, ranging from classical results back to Corradi--Hajnal~\cite{Corr} and Hajnal--Szemer\'edi~\cite{zbMATH03344609} on $K_k$-factors to the celebrated result of Johansson--Kahn--Vu~\cite{Johansson2008Factors} on perfect tilings in random graphs.
We note that the $F$-factor problem for quasi-random graphs with positive density and minimum degree $\Omega(n)$ has been solved implicitly by Koml\'{o}s--S\'{a}rk\"{o}zy--Szemer\'{e}di~\cite{Koml1997Blow} in the course of developing the famous Blow-up Lemma.

\subsection{Quasi-random graphs and hypergraphs}
The study of quasi-random graphs was launched in the late 1980s by Chung, Graham and Wilson~\cite{Chung-quasi-random}.
These are constant density graphs which behave like random graphs.
There is a list of properties that force a graph to be quasi-random and it was shown that these properties are all equivalent (up to the dependency of constants).
The investigation of quasi-random $k$-graphs was started by Chung and Graham~\cite{chung1990quasi} and is widely popular   \cite{quasihyper,Ch90,Ch91,Chung10,wquasi,G06,G07,KNRS,KRS02,LenzMubayi_eig,lenz2015poset,RSk04,Towsner}.
Unlike in graphs, there are several non-equivalent notions of quai-randomness in $k$-graphs for $k\ge 3$ (see~\cite{Reiher_2017}).
In particular, the subgraph containment problem for quasi-random $k$-graphs, $k\geq3$, is quite different from the case $k=2$ and has been an interesting topic over decades.
Indeed, for $k\geq 3$, R\"odl noted that by a construction from~\cite{ErdosHajnal}, quasi-random $k$-graphs may not contain a single copy of, say, a $(k+1)$-clique.
In fact below we will consider two different notions of quasi-random $k$-graphs,
which have been studied by Reiher, R\"odl and Schacht~\cite{MR3548293}.
The first and weakest concept that we consider here is $(p, \mu, \points)$-quasi-randomness.

\begin{defi}
Given integers $n\geq k\ge 2$, let real numbers $p \in [0, 1]$, $\mu >0$, and $H=(V,E)$ be a $k$-graph with $n$ vertices. We say that $H$ is ($p,\mu,\points$)\emph{-dense} if for all $X_1,\dots,X_k\subseteq V$,
\begin{equation}\label{eq:count}
e_{\points}(X_1,\dots,X_k)\geq p|X_1|\cdots|X_k|-\mu n^k,
\end{equation}
where $e_{\points}(X_1,\dots,X_k)$ is the number of $(x_1,\dots,x_k)\in X_1\times \cdots \times X_k$ such that $\{x_1,\dots,x_k\}\in E$.
\end{defi}

Next we come to a stronger concept of quasi-random hypergraphs where rather than ``$k$ sets of vertices" we consider ``a set of vertices and a set of $(k-1)$-elements".

\begin{defi}
Given integers $n\geq k\ge 2$, let real numbers $p \in [0, 1]$, $\mu >0$, and $H=(V,E)$ be a $k$-graph with $n$ vertices. We say that $H$ is ($p,\mu,\edge$)\emph{-dense} if for all $X\subseteq V$ and  $Y\subseteq V^{k-1}$,
\begin{equation}\label{eq:count1}
e_{\edge}(X,Y)\geq p|X||Y|-\mu n^k,
\end{equation}
where $e_{\edge}(X,Y)$ is the number of pairs $(x,(y_1, \dots, y_{k-1}))\in X\times Y$ such that $\{x,y_1, \dots, y_{k-1}\}\in E$.
\end{defi}

Throughout the paper, we say that $H$ is an \emph{$(n,p,\mu, \mathscr{S})$ $k$-graph} if $H$ has $n$ vertices and is ($p,\mu,\mathscr{S}$)-dense for a symbol $\mathscr{S}\in \{\points, \edge\}$.
Given $p\in
[0,1]$ and $\mathscr{S}\in \{\points, \edge\}$, we sometimes write a $k$-graph is \emph{$p$-dense $\mathscr{S}$-quasi-random} 
 to mean that it is ($p,\mu,\mathscr{S}$)-dense for some small $\mu$.
Let $\mathcal{C}(n,p,\mu, \mathscr{S})$ denote the class of all ($n,p,\mu, \mathscr{S}$) $k$-graphs and one can observe that
\[
\mathcal{C}(n,p,\mu, \edge)\subseteq \mathcal{C}(n,p,\mu, \points)
\]
holds for all $p\in [0,1]$ and $\mu>0$.
Now for $\mathscr{S}\in \{\points, \edge\}$, we give a formal definition of Tur\'an densities for $\mathscr{S}$-quasi-random $k$-graphs, which is an extension of the definition of Tur\'an densities for quasi-random hypergraphs in~\cite{MR3548293}. Given a $k$-graph $F$, we define:
\begin{equation*}
\begin{split}
\label{dot-turan-dense}
\pi_{\mathscr{S}}(F) = \sup \{ p\in [0,1] & : \text{for\ every\ } \mu>0 \ \text{and\ } n_0\in \mathbb{N},\ \text{there\ exists\ an\ } F \text{-free} \\
&\quad (n, p,\mu,\mathscr{S})\ k \text{-graph\ } H\ \text{with\ } n\geq n_0 \}.
\end{split}
\end{equation*}


\subsection{$F$-factors in quasi-random $k$-graphs}
Lenz and Mubayi~\cite{Lenz2016Perfect} were the first to study the $F$-factor problems in quasi-random $k$-graphs, for $k\ge 3$.
For $\mathscr{S}\in \{\points, \edge\}$, $k\ge 2$ and $1\leq s \leq k-1$, define $\factor_k^{\mathscr{S} ,s}$ as the collection of $k$-graphs $F$ such that for all $0< p,\alpha<1$, there is some $n_0\in \mathbb{N}$ and $\mu > 0$ so that if $H$ is an $(n,p,\mu ,\mathscr{S})$ $k$-graph with $\delta _s(H)\geq \alpha n^{k-s}$, $n \geq n_0$ and $v(F)\mid n$, then $H$ has an $F$-factor.
For $k=2$, by the aforementioned result of~\cite{Koml1997Blow}, all graphs belong to $\factor_2^{\points ,1}$. Thus, Lenz and Mubayi raised the following natural question.

\begin{prob}{\rm(\cite[Problem 2]{Lenz2016Perfect}).}\label{prob}
For $k\ge 3$, which $k$-graphs $F$ belong to $\factor_k^{\points ,1}$?
\end{prob}

In~\cite{Lenz2016Perfect},
Lenz and Mubayi proved that all linear $k$-graphs belong to $\factor_k^{\points ,1}$ and provided a family of $3$-graphs not in $\factor_k^{\points ,1}$. 
More precisely, by a parity-based construction~\cite[Theorem 5]{Lenz2016Perfect}, they constructed an $(n,\frac{1}{8},\mu, \points )$ 3-graph $H^*$ with $\delta _1(H^*)\geq (\frac{1}{8}-\mu )\binom{n}{2}$, $\delta _2(H^*)\geq (\frac{1}{8}-\mu ){n}$ such that $H^*$ has no $F$-factor for those $F$ which have an even number of vertices and admits a partition of $V(F)$ into pairs such that each pair of vertices has a common member in their neighbor sets. 
Let $K_{2,2,2}$ denote the complete 3-partite 3-graph with parts of size two. It is easy to check that $K_{2,2,2}$ satisfies this property.
In our work~\cite{DHSWZ}, we completely solved Problem~\ref{prob} for $k=3$. Moreover, we also characterised all $k$-partite $k$-graphs $F$ with $F\in \factor^{\points,1}_k$.

\begin{thm}{\rm({\cite[Theorem 1.4]{DHSWZ}}).}\label{k-partite}
For $k \geq 3$, a $k$-partite $k$-graph $F\in \factor^{\points,1}_k$ if and only if there exists $v^*\in V(F)$ such that $|e\cap e'|\leq 1$ for any two edges $e,e'\in E(F)$ with $v^*\in e$ and $v^*\notin e'$.
\end{thm}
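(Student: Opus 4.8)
The characterisation is an ``if and only if''. For the \emph{necessity} direction we assume that $F$ has no vertex $v^*$ with the stated property and must produce, for some fixed $p\in(0,1)$, arbitrarily large $(n,p,\mu,\points)$ $k$-graphs with minimum degree $\Omega(n^{k-1})$ and no $F$-factor; for the \emph{sufficiency} direction we assume such a $v^*$ exists and must show that, for all $0<p,\alpha<1$, every sufficiently large $(n,p,\mu,\points)$ $k$-graph $H$ with $\delta_1(H)\ge\alpha n^{k-1}$ and $v(F)\mid n$ has an $F$-factor (here $\mu=\mu(p,\alpha,F)$ is small and we write $f=v(F)$).

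For necessity, the plan is to generalise the parity construction of Lenz and Mubayi. The failure of the $v^*$-condition means precisely that every $v\in V(F)$ lies in some edge $e$ for which there is an edge $e'\not\ni v$ with $|e\cap e'|\ge 2$. One builds $H$ from a random auxiliary object --- a random $2$-colouring of the vertices, or random $\mathbb{F}_2$-linear functionals on the vertex set if one wants exact codegree control --- so that $H$ carries a $\mathbb{Z}_2$-valued weight $\psi$ (on vertices, or on $(k-1)$-sets) for which every copy of $F$ in $H$ has the \emph{same} total $\psi$-weight modulo $2$, while the total weight of $V(H)$ is made incongruent to $(n/f)$ times this constant; hence $H$ has no $F$-factor. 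The ``over-covering'' $|e\cap e'|\ge 2$ at every vertex is exactly what makes each vertex rigid enough for this weight identity to persist through an arbitrary copy of $F$ and not be repairable locally. A routine concentration argument then shows $H$ is $(p,\mu,\points)$-dense with $p>0$ and has $\delta_1(H)=\Omega(n^{k-1})$.

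For sufficiency I would use the absorbing method. A supersaturation/embedding lemma --- supplied by $\points$-quasi-randomness with positive density together with the degree hypothesis --- gives $\Omega(n^{v(F')})$ copies of any fixed $k$-partite $k$-graph $F'$ in $H$; in particular, greedily removing copies of $F$ leaves an $F$-tiling missing at most $\gamma' n$ vertices. The core step is to set aside in advance an absorbing set $A$, $|A|\le\gamma' n$, such that $H[A\cup W]$ has an $F$-factor for every $W\subseteq V(H)\setminus A$ with $f\mid |W|$ and $|W|$ small. By the Lo--Markstr\"{o}m framework this reduces to $F$-reachability: for all but a small set of vertices, any two vertices $u,u'$ are joined by $\Omega(n^{f-1})$ sets $W$ of size $f-1$ with $H[W\cup\{u\}]$ and $H[W\cup\{u'\}]$ both copies of $F$. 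One produces such $W$ by taking a copy of $F-v^*$ (the $k$-graph obtained from $F$ by deleting $v^*$) and letting $u$ and $u'$ each play the role of $v^*$; counting these is the same as counting copies --- with the two ``$v^*$-vertices'' pinned to $u$ and $u'$ --- of the $k$-partite $k$-graph $F^{(2)}$ formed from two copies of $F$ glued along $F-v^*$, and since $v(F^{(2)})=f+1$ there should be $\Omega(n^{(f+1)-2})=\Omega(n^{f-1})$ of these for almost every pair. The hypothesis on $v^*$ is decisive here: because $|e\cap e'|\le 1$ whenever $v^*\in e$ and $v^*\notin e'$, the graph $F^{(2)}$ has no ``over-covered'' $(k-1)$-subset, so hosts that are merely $\points$-quasi-random still embed it evenly across pairs --- precisely the property that the Lenz--Mubayi hosts destroy for the excluded $F$'s. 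The few exceptional vertices are handled first by covering them with disjoint copies of $F$ using $\delta_1(H)\ge\alpha n^{k-1}$, and then $A$ absorbs the leftover of the almost-perfect tiling.

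I expect the main obstacle to be the reachability estimate in the sufficiency direction: proving, with only the weak $\points$-notion available --- which controls the macroscopic counts $e_{\points}(X_1,\dots,X_k)$ but gives no information about individual $(k-1)$-set neighbourhoods --- that $F^{(2)}$ embeds into every admissible $H$ with the prescribed pair pinned in $\Omega(n^{f-1})$ ways for almost all pairs, and clarifying why the $v^*$-condition is exactly what this requires. The analogous difficulty on the construction side is to design the weight $\psi$ and to verify that the absence of a valid $v^*$ forces every copy of $F$ to respect it, all while keeping $H$ genuinely quasi-random with the stated minimum degree.
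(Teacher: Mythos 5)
The paper cites this theorem from [DHSWZ] and does not reprove it here; what it exposes of the argument is the reduction $\factor^{\points,1}_k=\cover^{\points,1}_k$ (Theorem~\ref{factor-cover1}) and the absorbing framework of Section~4, under which the $v^*$-condition only has to be a characterisation of the cover property for $k$-partite $k$-graphs.

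Your sufficiency argument takes a different route and contains a genuine gap. Counting copies of $F^{(2)}$ (two copies of $F$ glued along $F-v^*$) with the two $v^*$-vertices pinned to $u,u'$ forces each $S\in N_F(v^*)$ to embed into $N_H(u)\cap N_H(u')$, so the pinned count is governed by the codegree $|N_H(u)\cap N_H(u')|$ --- a quantity that $\points$-quasi-randomness together with $\delta_1(H)=\Omega(n^{k-1})$ does not control at all. The paper's own Theorem~\ref{construction} produces a $\points$-dense $3$-graph with $\delta_1(H)=\Omega(n^2)$ containing a vertex $z$ with $N_H(z)\cap N_H(w)=\emptyset$ for every $w$; since $N_F(v^*)\neq\emptyset$ for every $k$-partite $F$, the pinned $F^{(2)}$-count is identically zero for every pair containing $z$, so the jump from an average of $\Omega(n^{f-1})$ per pair to the same bound ``for almost every pair'' does not follow. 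Your structural claim that the $v^*$-condition rids $F^{(2)}$ of over-covered $(k-1)$-subsets is also incorrect: for each $S\in N_F(v^*)$ the edges $S\cup\{v^*_1\}$ and $S\cup\{v^*_2\}$ of $F^{(2)}$ share the $(k-1)$-set $S$ (and if $v\in S$, then with $e=S\cup\{v^*_1\}\ni v$ and $e'=S\cup\{v^*_2\}\not\ni v$ one has $|e\cap e'|=k-1\ge 2$). The framework used in the paper sidesteps this entirely: from $\Omega(n^{f-1})$ $F$-covers at every vertex (Lemma~\ref{cover-supersaturation}) a pigeonhole argument (Lemma~\ref{c+1-reachable}) yields that any $\lfloor 1/\gamma\rfloor+1$ vertices contain two $(F,\beta,1)$-reachable ones --- this needs no pair to be directly reachable and no codegree information --- and then Lemmas~\ref{S-closed}, \ref{partation} and~\ref{closed} produce a partition into closed classes and merge them via transferrals, whereupon Lemma~\ref{absorb} gives the absorbing set. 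On the necessity side, the paper's own construction (Theorem~\ref{construction}) is a single-bad-vertex obstruction whose hypothesis is, for $k=3$, equivalent to the failure of the $v^*$-condition; this is a different mechanism from the parity construction you propose to generalise.
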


It is natural to consider the following problem.

\begin{prob}\label{partiteprob}
Given a $k$-partite $k$-graph $F$, what conditions of quasi-randomness and degree for the host $k$-graph $H$ are sufficient to ensure it contains an $F$-factor?
\end{prob}

By the construction of Lenz and Mubayi, we naturally want to know, does $p>1/8$ guarantee that every $p$-dense $\points$-quasi-random 3-graph $H$ with minimum degree $\Omega(n^2)$ has a $K_{2,2,2}$-factor? Unfortunately, this appears to be false by the following result.

\begin{thm}\label{construction}
Let $F$ be a $3$-graph satisfying that for each $v\in V(F)$ there exists a vertex $u$ such that $N(v)\cap N(u)\neq \emptyset$. Then for any $p\in(0,1)$ and $\mu>0$, there exists an $n_0$ and $\alpha>0$ such that for all $n\ge n_0$, there exists an $(n,p,\mu, \points)$ $3$-graph $H$ with $\delta_1(H)\ge \alpha n^{2}$ such that $H$ has no $F$-factor.
\end{thm}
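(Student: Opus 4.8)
The plan is to construct $H$ from the complete $3$-graph $K^{(3)}_n$ by deleting a \emph{sparse} family of edges chosen so as to plant a divisibility barrier that kills every $F$-factor, while perturbing the $\points$-density and the vertex degrees only negligibly. The reason the density requirement is essentially free here is that condition \eqref{eq:count} is \emph{one-sided}: it only asks that $e_{\points}(X_1,X_2,X_3)$ be \emph{at least} $p|X_1||X_2||X_3|-\mu n^3$. Hence if we delete at most $\eps n^3$ edges from $K^{(3)}_n$, where $\eps=\eps(\mu)$ is sufficiently small, then for all $X_1,X_2,X_3\subseteq V(H)$ we still have $e_{\points}(X_1,X_2,X_3)\ge |X_1||X_2||X_3|-O(\eps n^3)\ge p|X_1||X_2||X_3|-\mu n^3$, so the resulting $3$-graph is $(n,p,\mu,\points)$-dense \emph{for every} $p\in(0,1)$ simultaneously; no separate density argument or sparsification is needed.

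The work is therefore in choosing which edges to delete. First I would extract the combinatorial content of the hypothesis: since no vertex of $F$ is ``nice'' in the sense of Theorem~\ref{k-partite}, the pairs of $F$ lying in at least two edges --- the \emph{heavy pairs} --- are rich enough that every vertex of $F$ lies in an edge whose opposite pair is heavy. This is precisely the feature used in \cite{DHSWZ} to show that such an $F$ fails to lie in $\factor^{\points,1}_3$, and I would adapt (a quantitatively explicit version of) the defect constructions behind Theorem~\ref{k-partite}. In the simplest instance one fixes a set $C\subseteq V(H)$ with $|C|=\lfloor\eps n\rfloor$, with $|C|$ pushed into a prescribed residue class, lets $B=V(H)\setminus C$, and takes $H$ to consist of all triples meeting $C$ in an even number of vertices. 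Then $K^{(3)}_n$ loses only $O(\eps n^3)$ edges, all incident to $C$, so $H$ is $(n,p,\mu,\points)$-dense; and one checks $\delta_1(H)\ge\tfrac{\eps}{2}n^2$, since every vertex of $B$ lies in $\Omega(n^2)$ triples inside $B$ and every vertex of $C$ lies in all of the $(|C|-1)(n-|C|)=\Omega(\eps n^2)$ triples formed with one further vertex of $C$ and one of $B$. So $\delta_1(H)\ge\alpha n^2$ with $\alpha=\alpha(\mu)>0$.

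It remains to prove $H$ has no $F$-factor, and this is where the hypothesis on $F$ is genuinely used. For a copy $\phi$ of $F$ in $H$, the parity rule forces $\phi^{-1}(C)$ to meet every edge of $F$ in an even number of vertices, i.e.\ $\mathbf 1_{\phi^{-1}(C)}$ lies in the $\mathbb Z_2$-kernel of the edge--vertex incidence matrix $M_F$; because every vertex of $F$ lies in an edge with a heavy opposite pair, these ``balanced'' subsets are rigid, so $|\phi^{-1}(C)|$ ranges over a fixed finite set $\mathcal L_F\subseteq\{0,\dots,v(F)-1\}$. Summing $|\phi^{-1}(C)|$ over the $n/v(F)$ copies of an alleged $F$-factor must reproduce $|C|$; choosing $|C|$ outside the set of such sums --- possible whenever $\mathcal L_F$ has a non-trivial common structure, e.g.\ all its nonzero elements share a divisor larger than $1$ --- gives a contradiction.

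The step I expect to be the main obstacle is exactly this last one, carried out uniformly over all $F$ satisfying the hypothesis: for some such $F$ the single set $C$ with the parity rule does not by itself suffice (the relevant congruence can be trivial), and one must instead use a more elaborate defect --- a union of several prescribed-size sets, or a $\pm1$-weighting $w$ of $V(H)$ making $H=\{e:\sum_{x\in e}w(x)=0\}$ so that every copy of $F$ has ``total weight'' confined to a proper sublattice of $\mathbb Z$ --- and then show, via a case analysis governed by the kernel and row space of $M_F$ over $\mathbb Z$ and over $\mathbb Z_2$, that one of these gadgets always works. Establishing that the per-vertex hypothesis does guarantee this, for every admissible $F$ and simultaneously with a defect small enough for the degree bound, is the crux; the density and degree estimates above are then routine.
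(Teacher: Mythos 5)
Your opening observation is correct and worth keeping: since the $\points$-condition in \eqref{eq:count} is one-sided, any $3$-graph obtained from $K_n^{(3)}$ by deleting at most $\eps n^3$ edges is simultaneously $(n,p,\mu,\points)$-dense for every $p\in(0,1)$ once $\eps\ll\mu$, and the minimum-degree estimate for the parity gadget $C$ is routine. But the heart of the theorem is exactly the step you flag as ``the crux,'' and there you have a genuine gap: you do not show, for \emph{every} $F$ satisfying the hypothesis, that some parity or lattice defect kills all $F$-factors. What you would need is control over the $\mathbb F_2$-kernel of the edge--vertex incidence matrix of $F$ strong enough to force a nontrivial common divisor among the support sizes of kernel vectors; the hypothesis (for each $v$ there is $u$ with $N_F(v)\cap N_F(u)\neq\emptyset$) only yields that kernel vectors are constant on a ``twin'' equivalence, which bounds supports from below but does not obviously impose divisibility, and the fallback $\pm1$-weighting gadget is equally unjustified. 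As written, the factor-killing half of the argument is a plan, not a proof.

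The paper sidesteps all of this by attacking $\cover$ rather than $\factor$: instead of a divisibility obstruction, it plants a single vertex $z$ that lies in \emph{no} copy of $F$ whatsoever. Take $G\in\mathbb G(n-1,q)$ with $q^3=p$, let the edges of $H$ avoiding $z$ be the triangles of $G$, and let the edges through $z$ be the triples $\{u,v,z\}$ with $\{u,v\}$ a non-edge of $G$. Then $N_H(z)$ consists of non-edges of $G$, while for any $w\neq z$ every pair in $N_H(w)$ is either an edge of $G$ or a pair through $z$; hence $N_H(z)\cap N_H(w)=\emptyset$ for all $w$. If an embedding $g$ of $F$ into $H$ sent some $v\in V(F)$ to $z$, the hypothesis gives $u$ and a pair $S\in N_F(v)\cap N_F(u)$, whose image $g(S)$ would lie in $N_H(z)\cap N_H(g(u))=\emptyset$, a contradiction. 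Density and minimum degree follow from standard concentration. This is both stronger (it shows $z$ is uncoverable, not merely that a factor fails for a counting reason) and simpler: the per-vertex hypothesis translates directly into an embedding obstruction and no lattice or kernel analysis is needed.
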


$K_{2,2,2}$ is an example of a $3$-graph $F$ that satisfies the conditions of Theorem~\ref{construction}. For $p$ close to $1$ this shows that $\points$-quasi-randomness and minimum degree conditions of the host $3$-graph $H$ are not sufficient to ensure it contains  $K_{2,2,2}$-factors.
However, if we replace the minimum degree by the minimum codegree condition, then we can prove the following result.

\begin{thm}\label{1/8-dot-codegree}
Given $0<\varepsilon,\alpha <1$, and a $3$-partite $3$-graph $F$ of order $f$,
there exists an $n_0$ and $\mu>0$ such that the following holds for $n\geq n_0$. Let $H$ be an $(n,\frac{1}{8}+\varepsilon,\mu ,\points )$ 3-graph with $\delta_2(H)\geq \alpha n$ and $n\in f\mathbb{N}$. Then $H$ has an $F$-factor.
\end{thm}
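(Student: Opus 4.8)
The plan is to run the absorbing method, with all of the genuine difficulty concentrated in a single \emph{connectivity lemma}; everything else will go through for any positive density once the codegree bound $\delta_2(H)\ge\alpha n$ is in force. Fix a $3$-partite realisation $V(F)=W_1\cup W_2\cup W_3$ and write $f=|W_1|+|W_2|+|W_3|$; for $v\in V(H)$ let $L_v$ denote the link graph of $v$, so that the hypothesis $\delta_2(H)\ge\alpha n$ says exactly that each $L_v$ has minimum degree at least $\alpha n$. I would establish three ingredients, for a suitable small constant $\beta=\beta(\varepsilon,\alpha,F)>0$: (i) an \emph{almost-cover lemma} --- $H$ has an $F$-tiling covering all but at most $\beta n$ vertices; (ii) a \emph{connectivity lemma} --- after deleting $o(n)$ exceptional vertices, all vertices of $H$ lie in one ``$F$-reachability'' class; and (iii) an \emph{absorbing lemma} --- $H$ contains a set $A$ with $|A|\le\beta n$ and $f\mid|A|$ such that $H[A]$ and $H[A\cup L']$ have $F$-factors for every $L'\subseteq V(H)\setminus A$ with $|L'|\le\beta^2 n$ and $f\mid|L'|$. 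Granting (i)--(iii), one takes $A$ aside, applies (i) to $H-A$, and absorbs the leftover set $L$ (which satisfies $|L|\le\beta n$ and $f\mid|L|$) using $A$.

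For the almost-cover lemma I would first observe that a $\points$-dense $3$-graph of positive density, with $\mu$ sufficiently small, contains every fixed $3$-partite $3$-graph: for linear-sized $X_1,X_2,X_3$ the inequality $e_{\points}(X_1,X_2,X_3)\ge p|X_1||X_2||X_3|-\mu n^3$ forces the tripartite density between them to be at least a positive constant, and tripartite supersaturation then yields $\Omega(n^f)$ copies of $F$; thus $\pi_{\points}(F)=0$. In particular every linear-sized induced subhypergraph of $H$ still contains a copy of $F$, which suffices --- via LP duality, since otherwise the ``cheap'' vertices of an optimal fractional cover would form a linear set carrying an $F$-copy of dual weight less than $1$ --- to produce a fractional $F$-tiling of weight $(1-o(1))n/f$. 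This is rounded to an integral $F$-tiling missing only $o(n)$ vertices in the usual way, via a weak (that is, $\points$-type) regularity partition of $H$, whose reduced $3$-graph is dense because $p>0$.

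The connectivity lemma is where the hypothesis $p>1/8$ is indispensable, and I expect it to be the main obstacle. Call $u,v$ \emph{$F$-reachable} if for some constant $c=c(F)$ there are $\Omega(n^{cf-1})$ sets $Z$ of size $cf-1$, disjoint from $\{u,v\}$, with both $H[Z\cup\{u\}]$ and $H[Z\cup\{v\}]$ admitting $F$-factors; this is precisely the relation that lets one exchange $u$ and $v$ across an $F$-tiling at the cost of a bounded local rearrangement. Using $\delta_2(H)\ge\alpha n$ one shows that \emph{most} pairs are $F$-reachable in one or two steps: since $F$ embeds into $K_{f,f,f}$, a reachability witness for $\{u,v\}$ can be assembled from a copy of $K_{f,f}$ lying in the links of $u$, of $v$, and of $f-2$ further common-extension vertices, and the identity $\sum_{u}|E(L_v)\cap E(L_u)|=\sum_{\{x,y\}\in E(L_v)}(\deg(\{x,y\})-1)\ge e(L_v)(\alpha n-1)=\Omega(\alpha^2 n^3)$ makes quadratically many links pairwise share quadratically many edges. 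The crux is to upgrade ``most pairs'' to ``one class''. If the reachability graph were disconnected after discarding $o(n)$ vertices, unwinding the definitions would produce an auxiliary structure on $V(H)$ --- morally a balanced bipartition of $V$, or a density-$\tfrac12$ quasi-random graph on $V$ --- to which the copies of $F$ and of the linking gadgets are forced to be ``parity-compatible''. A counting argument would then bound the number of edge-triples compatible with such a structure and, after optimising, force $e_{\points}(V,V,V)\le(\tfrac18+o(1))n^3$, i.e.\ the $\points$-density of $H$ to be at most $1/8+o(1)$ --- the constant $\tfrac18=(\tfrac12)^3$ appearing because each edge of $H$ must have its three constituent pairs each survive an independent ``$\tfrac12$-chance'' compatibility test. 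This contradicts $p>1/8$, so the reachability graph is connected. The value is best possible, by the Lenz--Mubayi $(n,\tfrac18,\mu,\points)$ construction with $\delta_2=\Omega(n)$ and no $K_{2,2,2}$-factor, so the counting here must be carried out with no slack.

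Given connectivity, the absorbing lemma is routine. Since every vertex is reached by $\Omega(n^{cf-1})$ witnesses and, off an $o(n)$ set, there is a single reachability class, a random subfamily of bounded-size absorbing gadgets has, with positive probability, union $A$ with $|A|\le\beta n$ and $f\mid|A|$ such that $H[A]$ has an $F$-factor and, for any $L'$ as in (iii), the vertices of $L'$ can be absorbed a bounded number at a time by rerouting through as-yet-unused gadgets, giving an $F$-factor of $H[A\cup L']$. Combining (i)--(iii) completes the proof. The only hard step is the connectivity lemma: the almost-cover and absorbing parts hold for any $p>0$ together with $\delta_2(H)\ge\alpha n$, so the role of $p>1/8$ --- and the reason $1/8$ is a genuine threshold here rather than an artefact --- is exactly to force connectivity of the $F$-reachability relation against the parity-type obstruction that powers the Lenz--Mubayi construction (and, in the minimum vertex-degree setting, Theorem~\ref{construction}).
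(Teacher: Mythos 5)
Your overall framework is the same as the paper's --- absorbing method with almost-cover plus absorbing gadgets, with $p>1/8$ invoked only to establish connectivity of reachability --- and your heuristic that the threshold arises as $1/8=(1/2)^3$ from a ``triple coincidence'' is morally on the mark. But the proposal has a genuine gap at exactly the step you flag as the crux.

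Specifically, step (ii), your ``connectivity lemma,'' is never actually proved; you describe only a destination (``a counting argument would then\dots force $e_{\points}(V,V,V)\le(\tfrac18+o(1))n^3$'') without a route. The paper's version of this step (Lemma~\ref{transferral}) is established via the hypergraph regularity method: apply Theorem~\ref{Reg-lem} to $H$ with initial partition $\{X,Y\}$, form the reduced hypergraph $\mathcal{A}$ whose vertices are the bipartite pieces $P^{ij}_a$ of the regularity partition, show $\mathcal{A}$ is $p^*$-dense for some $p^*>1/8$, and then run a two-colouring argument on $V(\mathcal{A})$: colour each $P^{i_1 i_2}_a$ red or blue according to whether a dense-and-regular polyad through it crosses the $X/Y$ boundary the ``wrong'' way, observe that if $(1,-1)\notin L^{\lambda}_{\mathcal{P},F}(H)$ then (by the counting lemma applied to the $3$-complexes above good polyads) no vertex receives both colours, and finally use the density bound $p^*>1/8$ together with a short combinatorial case analysis on the colour fractions of the parts $\mathcal{P}^{ij}$ to derive a contradiction. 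This is where the ``independent $\tfrac12$-chance per pair'' intuition gets made precise: the edges of $\mathcal{A}$ constrained to a fixed triple of index-classes must lie within one colour class on each of the three parts, and since the colour classes bipartition each part, a $p^*>1/8$ density forbids this. None of this machinery --- the regularity lemma, the reduced hypergraph, the counting lemma used to translate polyad-edges into many copies of $F$ with prescribed index vector --- appears in your proposal, and the statement you aim for (``the reachability graph is connected after discarding $o(n)$ vertices'') is also not quite what is needed. The absorbing lemma in the paper (Lemma~\ref{Absorbing-Lemma}, via Lemma~\ref{closed}) requires the lattice condition $(1,-1)\in L^{\lambda}_{\mathcal{P},F}(H)$ for \emph{every} balanced-enough bipartition $\mathcal{P}$ of $V(H)$, which is a robust, quantitative transferral statement rather than a connectivity statement about a reachability graph; you would still need the reduction from one to the other (Lemmas~\ref{partation}--\ref{closed}), which is itself nontrivial.

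To a lesser extent, your sketch of the almost-cover step is also loose: the paper instead quotes an almost-perfect tiling lemma (Lemma~\ref{Almost-factor}) whose proof again rests on the regularity method, and the paper's cover property is established via the codegree-driven $\eta$-goodness argument of Proposition~\ref{x-good} and Lemma~\ref{good-reach.}, which is cleaner than your LP-duality sketch. These are presentational differences; the real missing content is the transferral argument.
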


As mentioned above, Lenz and Mubayi (see~\cite[Section 4]{Lenz2016Perfect}) constructed an $(n,\frac{1}{8},\mu, \points )$ 3-graph $H^*$ with  $\delta _2(H^*)\geq (\frac{1}{8}-\mu ){n}$ and $H^*$ has no $K_{2,2,2}$-factor. 
That is, $1/8$ is the density threshold for ensuring  all $3$-partite $3$-graphs $F$-factors in $\points$-quasi-random $3$-graphs given a minimum codegree condition $\Omega(n)$.

\subsection{Perfect tilings of $k$-graphs in quasi-random $k$-graphs with positive density}
Now we turn to $p$-dense $k$-graphs when $p>0$ is arbitrarily small, the original motivation of~\cite{Lenz2016Perfect}.
Note that (see~\cite{Han2017Minimum}) a trivial necessary condition for a $k$-graph $H$ to contain an $F$-factor is that every vertex of $H$ is covered by at least one copy of $F$.
This can be visualized as a set of (not necessarily vertex disjoint or distinct) copies of $F$ whose union covers the vertex set of the host graph, so also called an \emph{$F$-cover}. For the $F$-factor problems, the cover property plays an essential role.
Indeed, for $\mathscr{S} \in \{\points, \edge \}$ and $1\le s\le k-1$,  we define  $\cover_{k}^{\mathscr{S}, s}$ as the collection of $k$-graphs $F$ such that for all $0< p,\alpha<1$, there is some $n_0\in \mathbb{N}$ and $\mu > 0$  so that if $H$ is an $(n,p,\mu ,\mathscr{S})$ $k$-graph with $\delta _s(H)\geq \alpha n^{k-s}$ and $n \geq n_0$, then each vertex $v$ in $V(H)$ is contained in a copy of $F$, namely, $H$ has an $F$-cover. 
However, in our proofs, we need a stronger cover property.
More precisely, let $\cover_{k,>}^{\mathscr{S}, s}$ be a collection of $k$-graphs such that a $k$-graph $F$ is in it if for all $0< p,\alpha<1$, there is some $n_0\in \mathbb{N}$ and $\gamma, \mu > 0$  so that if $H$ is an $(n,p,\mu ,\mathscr{S})$ $k$-graph with $\delta _s(H)\geq \alpha n^{k-s}$ and $n \geq n_0$, then each vertex $v$ in $V(H)$ is contained in  at least $\gamma n^{k-1}$ copies of $F$.
We proved the following equivalence relation in~\cite{DHSWZ}, which reduced the problem of $F$-factors to the study of ``covering every vertex of the host $k$-graph by a copy of $F$''.

\begin{thm}{\rm({\cite[Theorem 1.3]{DHSWZ}}).}\label{factor-cover1}
For $k \geq 2$, $\factor^{\points,1}_k=\cover^{\points,1}_{k}$.
\end{thm}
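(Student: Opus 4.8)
The plan is to prove the inclusions $\factor_k^{\points,1}\subseteq\cover_k^{\points,1}$ and $\cover_k^{\points,1}\subseteq\factor_k^{\points,1}$ separately. The first is routine. Fix $F\in\factor_k^{\points,1}$, write $f=v(F)$, fix $0<p,\alpha<1$, and apply the $F$-factor property with parameters $(p,\alpha/2)$ to obtain $\mu_0>0$ and $n_0'$. For any sufficiently large $(n,p,\mu,\points)$ $k$-graph $H$ with $\mu<\mu_0/2$ and $\delta_1(H)\ge\alpha n^{k-1}$, and any vertex $v\in V(H)$, delete $r:=n\bmod f$ vertices of $V(H)\setminus\{v\}$: the resulting spanning subgraph $H'$ on $n'=n-r$ vertices is $(p,2\mu,\points)$-dense, has $\delta_1(H')\ge\alpha n^{k-1}-r\binom{n-1}{k-2}\ge\tfrac{\alpha}{2}(n')^{k-1}$, and satisfies $f\mid n'$, so it has an $F$-factor; the member of it containing $v$ is a copy of $F$ through $v$ in $H$. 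As $v$ was arbitrary, $H$ has an $F$-cover.

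For the reverse inclusion I would use the absorption method. Fix $F\in\cover_k^{\points,1}$, $f=v(F)$, $0<p,\alpha<1$, and choose $\mu\ll\eta\ll\beta\ll\min\{p,\alpha,1/f\}$; let $H$ be a large $(n,p,\mu,\points)$ $k$-graph with $\delta_1(H)\ge\alpha n^{k-1}$ and $f\mid n$. The argument rests on two ingredients. The first is an \emph{almost-perfect tiling lemma}: for every fixed $\gamma>0$ (with $\mu$ small in terms of $\gamma$), $H$ has an $F$-tiling covering all but at most $\gamma n$ vertices. This is short: let $\mathcal M$ be a maximum $F$-tiling with uncovered set $U$ and suppose $|U|\ge\gamma n$; then $H[U]$ is $(p,\mu/\gamma^k,\points)$-dense, and deleting its $o(n)$ vertices of atypically low degree leaves a quasi-random induced subgraph on at least $\tfrac{\gamma}{2}n$ vertices with minimum degree $\Omega(n^{k-1})$, to which the $\cover$ hypothesis applies, producing a copy of $F$ disjoint from $\mathcal M$ and contradicting maximality.

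The second, and principal, ingredient is an \emph{absorbing set}: a set $A\subseteq V(H)$ with $f\mid|A|\le\eta n$ such that $H[A\cup W]$ has an $F$-factor for every $W\subseteq V(H)\setminus A$ with $f\mid|W|\le\eta^2 n$. Granting $A$, the theorem follows: set $A$ aside, apply the almost-perfect tiling lemma to $H-A$ with $\gamma<\eta^2$ to obtain an $F$-tiling leaving a set $W$ with $|W|\le\eta^2 n$, note $f\mid|W|$ since $f\mid n$ and $f\mid|A|$, and absorb $W$ into $A$. To construct $A$ I would follow the now-standard route (in the spirit of Lo--Markstr\"om and Han). First upgrade $\cover$ to a counting statement: if a vertex $v$ lay in at most $\tfrac{\alpha}{2}n^{k-1}$ copies of $F$, deleting one edge through $v$ from each such copy yields a graph that is still $\points$-dense with $\delta_1\ge\tfrac{\alpha}{2}n^{k-1}$ but in which $v$ lies in no copy of $F$, contradicting $\cover$; hence every vertex lies in $\Omega(n^{k-1})$ copies, and with further work one shows each vertex lies in many ``flexible'' copies from which a vertex can be deleted and re-completed to a copy of $F$ in $\Omega(n)$ ways (using that $\points$-density forces almost all $(k-1)$-sets to have codegree $\Omega(n)$, so such re-completions exist). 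Next establish \emph{reachability}: for a constant $i$, every pair $u,v\in V(H)$ admits $\ge\beta n^{if-1}$ sets $S$ of size $if-1$ with both $H[S\cup\{u\}]$ and $H[S\cup\{v\}]$ $F$-factorable, by concatenating a flexible copy through $u$, a flexible copy through $v$, and a bounded number of disjoint auxiliary copies of $F$, gluing consecutive pieces along their re-completion neighbourhoods. Finally, feed reachability into the absorbing-set lemma, choosing $A$ inside a randomly selected reservoir.

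I expect the construction of the absorbing set, and within it the reachability step, to be the main obstacle. The source of the difficulty is that $\points$-quasi-randomness controls no individual $(k-1)$-codegree (and, by R\"odl's clique example, such hypergraphs can contain very few copies of a fixed $F$), so the copies of $F$ used for gluing must be routed through the atypically well-behaved $(k-1)$-sets, and one must force the re-completion neighbourhoods attached to a copy through $u$ and a copy through $v$ to intersect --- which is the role of the auxiliary copies, and which may leave a bounded number of ``reachability classes'' to reconcile with the target via the divisibility $f\mid n$. By contrast, the softer facts used above --- the counting upgrade of $\cover$ and the almost-perfect tiling lemma --- follow quickly from the stability of $\points$-density under vertex deletion and random subsampling, together with the $\cover$ hypothesis itself.
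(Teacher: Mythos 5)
Your first inclusion, $\factor^{\points,1}_k\subseteq\cover^{\points,1}_k$, is correct: discarding $n\bmod f$ vertices away from $v$ preserves $\points$-density and minimum degree up to negligible errors and restores divisibility, so the $F$-factor of the truncated graph supplies the desired copy of $F$ through $v$. For the reverse inclusion you have rightly identified the absorption framework (and your short derivation of an almost-perfect tiling lemma directly from $\cover$, by excising the few low-degree vertices of $H[U]$, is a reasonable alternative to the paper's Lemma~\ref{Almost-factor}). However, there are two genuine gaps in your construction of the absorbing set, and they are the parts where the real work lies.

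First, the supersaturation step is incorrect. Your edge-deletion argument only shows that every vertex lies in $\Omega(n^{k-1})$ copies of $F$: you delete one edge per copy, and deleting more than $O(n^{k-1})$ edges through $v$ would destroy the minimum-degree hypothesis, so the argument caps out at $n^{k-1}$. What the absorbing machinery actually needs (property~($\clubsuit$) in Lemma~\ref{absorb}, supplied by Lemma~\ref{cover-supersaturation}) is $\Omega(n^{f-1})$ copies through every vertex. The deletion trick cannot reach this exponent, because a single edge through $v$ can lie in $\Theta(n^{f-k})$ copies of $F$, so one need only delete $O(n^{k-1})$ edges to kill $\Theta(n^{f-1})$ copies without contradiction; a different argument (regularity-based counting, as in the cited source) is required. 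Your appeal to ``flexible'' re-completions raises the count to at most $\Omega(n^{k})$, which is still short of $n^{f-1}$ unless $f=k+1$.

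Second, and more importantly, the closure step is missing the key mechanism. You aim to show that all of $V(H)$ is $(F,\beta,i)$-closed by concatenating flexible copies, and you concede this ``may leave a bounded number of reachability classes to reconcile with the target via the divisibility $f\mid n$.'' But divisibility is not what merges reachability classes: $f\mid n$ is a global congruence, and if $V(H)$ splits into two closed parts of incompatible sizes one cannot absorb across the boundary. The machinery used here (Lemmas~\ref{absorb}--\ref{closed}, following the lattice-based absorption of Han) and, by analogy, in [DHSWZ], first produces a partition $\mathcal{P}$ of $V(H)$ into a bounded number of closed parts and then merges them by establishing \emph{transferrals}: for every coarse bipartition $\{X,Y\}$ one must show $(1,-1)\in L^{\lambda}_{\mathcal{P},F}(H)$, i.e.\ $\Omega(n^f)$ pairs of copies of $F$ with index vectors differing by $(1,-1)$. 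Deriving this from the hypothesis $F\in\cover^{\points,1}_k$ is precisely the substance of the theorem --- compare Theorem~\ref{factor3-2}, where for the codegree variant the transferral condition $\trans^2_3$ has to be imposed as a separate hypothesis, showing it is not automatic. Your sketch says nothing about how to obtain the transferral from $\cover$, so the reduction to the absorbing-set lemma does not close.
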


Actually, the proof of Theorem~\ref{factor-cover1} works for other (stronger) quasi-randomness notions (see {\cite[Section 6]{DHSWZ}}).

\begin{thm}\label{factor-cover2}
For any $k \geq 2$, $\factor^{\edge,1}_k=\cover^{\edge,1}_{k}$.
\end{thm}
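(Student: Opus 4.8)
The inclusion $\factor^{\edge,1}_k \subseteq \cover^{\edge,1}_k$ is immediate: every vertex of a host $k$-graph admitting an $F$-factor lies in one of the copies of $F$ forming that factor, so an $F$-factor is in particular an $F$-cover. The substance is the reverse inclusion, which I would prove by the absorption method, along exactly the lines of the proof of Theorem~\ref{factor-cover1}. The only thing that must be checked is that every host-graph manipulation used there stays valid under the $\edge$-quasi-random assumption, and it does: if $H$ is an $(n,p,\mu,\edge)$ $k$-graph and $D\subseteq V(H)$ with $|D|\le \delta n$, then $H-D$ is an $(n-|D|,p,\mu/(1-\delta)^k,\edge)$ $k$-graph, since the inequality $e_{\edge}(X,Y)\ge p|X||Y|-\mu n^k$ is inherited verbatim by restriction to $V(H)\setminus D$; and deleting one vertex destroys at most $n^{k-2}$ edges through any fixed vertex, so $\delta_1(H-D)\ge (\alpha-\delta)(n-|D|)^{k-1}$ whenever $\delta_1(H)\ge \alpha n^{k-1}$. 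Thus, up to adjusting $\mu$ and $\alpha$ by bounded factors, the family of $(n,p,\mu,\edge)$ $k$-graphs with $\delta_1\ge \alpha n^{k-1}$ is robust under deletion of a small linear set of vertices, which is all the proof ever asks of the host: every induced subgraph to which the cover hypothesis is applied below is again $\edge$-quasi-random with a good minimum vertex degree, and every embedding/counting step uses only the (weaker) $\points$-quasi-randomness that $\edge$-quasi-randomness implies.

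Fix $F\in\cover^{\edge,1}_k$ with $v(F)=f$ and reals $0<p,\alpha<1$. The argument rests on two lemmas. The first is an \emph{almost-perfect tiling lemma}: for any prescribed $\beta>0$, every sufficiently large $(n,p,\mu,\edge)$ $k$-graph $H$ with $\delta_1(H)\ge \alpha n^{k-1}$ contains a family of vertex-disjoint copies of $F$ covering all but at most $\beta n$ vertices. This follows by greedily removing copies of $F$ one at a time: while at least $\beta n$ vertices remain, the remaining induced subgraph is still an $(n',p,\mu',\edge)$ $k$-graph with $\delta_1\ge (\alpha/2)(n')^{k-1}$ and $\mu'\le \mu/\beta^k$ by the robustness above, so the cover hypothesis applied with parameters $(p,\alpha/2)$ supplies a copy of $F$ through some remaining vertex, which we delete; one needs only $\mu$ small in terms of $p,\alpha,\beta$ and $n$ large. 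The second is an \emph{absorbing lemma}: for a suitably small $\gamma>0$ there is $\beta>0$ such that every such $H$ contains a set $A\subseteq V(H)$ with $|A|\le \gamma n$ and $f\mid|A|$ for which $H[A\cup W]$ has an $F$-factor whenever $W\subseteq V(H)\setminus A$, $|W|\le\beta n$ and $f\mid|W|$. Granting the two lemmas, we conclude as usual: reserve such an $A$, apply the almost-perfect tiling lemma to $H-A$ to obtain vertex-disjoint copies of $F$ missing a set $W$ with $|W|\le\beta n$, note $f\mid|W|$ (since $f\mid n$, $f\mid|A|$, and every copy has $f$ vertices), and absorb $W$ with $A$.

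The absorbing lemma is the crux, and the obstacle is exactly the weakness of the hypothesis: the cover property supplies only \emph{one} copy of $F$ through each vertex, whereas absorption needs a plentiful supply of copies that can be rerouted. I would follow a Lo--Markstr\"om-type absorption scheme (the one already used for Theorem~\ref{factor-cover1}): it suffices to establish an $(F,i)$-\emph{reachability} statement for some fixed $i$ and constant $c>0$ — any two vertices $u,v$ admit at least $cn^{if-1}$ sets $S$ with $|S|=if-1$ and $u,v\notin S$ such that both $H[\{u\}\cup S]$ and $H[\{v\}\cup S]$ have $F$-factors — from which $A$ is assembled by choosing, disjointly, a bounded number of such connectors for a suitable family of vertex pairs, together with a routine handling of the divisibility constraints (which cause no difficulty here, as for a quasi-random $H$ reachability links all of $V(H)$). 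To squeeze reachability out of the cover hypothesis, the key is a bootstrapping step: iterating the cover property on the induced subgraphs obtained by deleting the vertices used so far — legitimate by the robustness discussion — produces $\Omega(n)$ copies of $F$ through any fixed vertex that pairwise meet only in that vertex; stitching such copies through $u$ and through $v$, together with further copies of $F$ on untouched vertices, into a connecting gadget, and counting its completions from this abundance of disjoint copies together with $\points$-quasi-randomness, yields the required connectors. This is precisely the content of the proof of Theorem~\ref{factor-cover1}, and because every step there either uses only $\points$-quasi-randomness of the host (implied by $\edge$-quasi-randomness) or applies the cover property to an induced subgraph (still $\edge$-quasi-random with good minimum degree), that proof transfers to the present $\edge$-quasi-random setting with only cosmetic changes; I would not reproduce it in full.
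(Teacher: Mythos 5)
Your plan matches the paper's intent: the paper's entire proof is the one sentence preceding the statement, observing that the argument for Theorem~\ref{factor-cover1} (that is, for [DHSWZ, Theorem 1.3]) carries over verbatim under the stronger $\edge$-quasi-randomness, for exactly the reasons you identify — $\edge$-density implies $\points$-density, and $\edge$-density is robust under deletion of a small linear set, so every quasi-random estimate and every application of the cover hypothesis to an induced subgraph remains available. So the route is the paper's route.

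There is, however, a concrete flaw in your greedy derivation of the almost-perfect tiling lemma: you claim that while at least $\beta n$ vertices remain, the surviving subgraph still has $\delta_1 \ge (\alpha/2)(n')^{k-1}$. Removing $m$ vertices can destroy up to about $m\,n^{k-2}/(k-2)!$ edges through any fixed surviving vertex, so the surviving degree is only bounded below by $\alpha n^{k-1} - m\,n^{k-2}/(k-2)!$, which becomes negative once $m$ is a moderate fraction of $n$ unless $\alpha$ is close to $1$; hence the cover hypothesis (which needs a minimum degree of the same order as the current instance) cannot simply be reapplied deep inside the greedy. This is precisely why the paper's Lemma~\ref{Almost-factor} carries no minimum-degree hypothesis at all, relying only on quasi-randomness together with $\pi_{\points}(F)=0$ (here one should use the $\edge$-analogue $\pi_{\edge}(F)=0$, which does follow from $F\in\cover^{\edge,1}_k$ after upgrading a single cover-copy to $\Omega(n^{f-1})$ copies by supersaturation as in Lemma~\ref{cover-supersaturation}). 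The fix to your greedy is simply to locate each new copy of $F$ by this Tur\'an-type fact rather than via the cover hypothesis; with that replacement the step is sound and the rest of your proposal agrees with the paper's.
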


As an application of Theorem~\ref{factor-cover2}, we will prove the following result.
\begin{thm}\label{dot-edge-partite}
For any $k$-partite $k$-graph $F$, $F\in \factor_k^{\edge,1}$.
\end{thm}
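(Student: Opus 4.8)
The plan is to reduce, via Theorem~\ref{factor-cover2}, to a covering statement, and then to prove that statement by a single first-moment argument that exploits the stronger $\edge$-quasi-randomness. By Theorem~\ref{factor-cover2}, $\factor^{\edge,1}_k=\cover^{\edge,1}_k$, so it is enough to show that an arbitrary $k$-partite $k$-graph $F$ lies in $\cover^{\edge,1}_k$: given $0<p,\alpha<1$, I must produce $\mu>0$ and $n_0$ so that every $(n,p,\mu,\edge)$ $k$-graph $H$ with $\delta_1(H)\ge\alpha n^{k-1}$ and $n\ge n_0$ has each vertex in a copy of $F$. Writing $f=v(F)$, a $k$-partite realisation $A_1,\dots,A_k$ of $F$ (each $|A_i|\ge1$) embeds $F$ into the complete $k$-partite $k$-graph $K_{k\times f}$ with all $k$ parts of size $f$, by mapping $A_i$ injectively into the $i$-th part; its image meets every part. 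So it suffices to cover an arbitrary $v\in V(H)$ by a copy of $K_{k\times f}$: relabel its parts so that $v$ lies in the first, then relabel inside that part so that $v$ realises a vertex of $A_1$; the induced copy of $F$ then contains $v$.

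First I would record the one place where $\edge$-quasi-randomness (rather than $\points$) is used. Let $L_v$ be the link of $v$, the $(k-1)$-graph on $V(H)\setminus\{v\}$ with $S\in E(L_v)$ iff $S\cup\{v\}\in E(H)$, so $|E(L_v)|=\deg_H(v)\ge\alpha n^{k-1}$. Applying the $\edge$-density inequality with $X=V(H)$ and $Y$ the set of ordered $(k-1)$-tuples of distinct vertices whose underlying set $S$ has $|N_H(S)|<\tfrac{p}{2}n$ gives $\tfrac{p}{2}n\,|Y|>pn\,|Y|-\mu n^k$, so there are at most $O(\mu n^{k-1})$ such ``bad'' $(k-1)$-sets. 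Choosing $\mu=\mu(p,\alpha)$ small, almost all of the $\ge\alpha n^{k-1}$ edges of $L_v$ are $(k-1)$-sets of $H$-codegree $\ge\tfrac{p}{2}n$, whence
\[
\sum_{x\in V(H)}\bigl|E(L_v)\cap E(L_x)\bigr|=\sum_{S\in E(L_v)}\deg_H(S)=\Omega(n^{k}).
\]
Since each term is at most $n^{k-1}$, at least $\Omega(n)$ vertices $x$ satisfy $|E(L_v)\cap E(L_x)|=\Omega(n^{k-1})$; for each such $x$ the $(k-1)$-graph on $V(H)\setminus\{v,x\}$ with edge set $E(L_v)\cap E(L_x)$ has positive density.

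Next I would invoke the supersaturation form of Erd\H{o}s's theorem on complete multipartite hypergraphs: $K_{(k-1)\times f}$ is $(k-1)$-partite, so $\ex(m,K_{(k-1)\times f})=o(m^{k-1})$, hence any $(k-1)$-graph on $m$ vertices with $\Omega(m^{k-1})$ edges contains $\Omega(m^{(k-1)f})$ copies of it. In particular $L_v$ contains $\Omega(n^{(k-1)f})>0$ such copies; pick one, $\mathcal{K}$, uniformly at random among them, with vertex classes $U_2,\dots,U_k$ (pairwise disjoint, each of size $f$, and every transversal $(w_2,\dots,w_k)\in U_2\times\cdots\times U_k$ satisfying $\{v,w_2,\dots,w_k\}\in E(H)$). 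Put
\[
Z=\bigl\{x\in V(H)\setminus(\{v\}\cup U_2\cup\cdots\cup U_k):\{x,w_2,\dots,w_k\}\in E(H)\text{ for every transversal }(w_2,\dots,w_k)\bigr\},
\]
so that $x\in Z$ exactly when $\mathcal{K}$ is also a copy of $K_{(k-1)\times f}$ in $L_x$; therefore
\[
\mathbb{E}\,|Z|=\sum_{x\ne v}\frac{\#\{\text{copies of }K_{(k-1)\times f}\text{ lying in }E(L_v)\cap E(L_x)\}}{\#\{\text{copies of }K_{(k-1)\times f}\text{ lying in }E(L_v)\}}\ge\frac{\Omega(n)\cdot\Omega(n^{(k-1)f})}{n^{(k-1)f}}=\Omega(n),
\]
the numerator bounded below by summing over the $\Omega(n)$ good vertices $x$ and the denominator above by the trivial bound $n^{(k-1)f}$. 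Hence for some $\mathcal{K}$ we have $|Z|\ge f-1$; setting $U_1:=\{v\}\cup(f-1\text{ vertices of }Z)$ yields pairwise disjoint $U_1,\dots,U_k$ of size $f$ with $U_1\times\cdots\times U_k\subseteq E(H)$, i.e.\ a copy of $K_{k\times f}$ through $v$, and hence a copy of $F$ through $v$ as in the first paragraph.

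The main obstacle---really the crux---is the codegree estimate: it is false for mere $\points$-quasi-randomness, since a $\points$-dense $k$-graph may have $\Theta(n^{k-1})$ $(k-1)$-sets of codegree zero, which is precisely the mechanism behind the counterexample of Theorem~\ref{construction}, so the $\edge$-notion is essential here. Everything else is routine: the reduction through Theorem~\ref{factor-cover2}, the embedding $F\hookrightarrow K_{k\times f}$, and the classical fact $\ex(m,K_{(k-1)\times f})=o(m^{k-1})$ with its standard supersaturation consequence. The only remaining care is bookkeeping of the constant hierarchy---$\mu\ll\alpha,p$, and $n_0$ large enough for the supersaturation counts---which I would fix at the outset of the formal proof.
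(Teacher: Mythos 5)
Your proof is correct, and it takes the same high-level route as the paper: reduce via Theorem~\ref{factor-cover2} to the $\cover$ statement, and then show that each vertex $v$ lies in a copy of the complete $k$-partite $k$-graph $K^k_k(f)$, which contains $F$. The difference is entirely in how the containment in $K^k_k(f)$ is achieved. The paper first proves Proposition~\ref{x-good} (an averaging argument showing that, once almost all $(k-1)$-sets have codegree $\Omega(n)$, each vertex $v$ is in an $\eta$-good pair $\{u,v\}$), and then applies Lemma~\ref{good-reach.} of Lo and Markstr\"{o}m as a black box, which says that an $\eta$-good pair forces a $K^k_k(m)$ through $u$ and through $v$. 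You instead give a self-contained replacement for that black box: from the same codegree estimate (itself the one genuine use of $\edge$-density, identical to the paper's) you deduce that $\Omega(n)$ vertices $x$ have $|E(L_v)\cap E(L_x)|=\Omega(n^{k-1})$, then use the Erd\H{o}s supersaturation of complete $(k-1)$-partite $(k-1)$-graphs and a first-moment average over copies of $K_{(k-1)\times f}$ in the link $L_v$ to produce $f-1$ vertices $x$ all compatible with a single such copy, which assembles $K^k_k(f)$ through $v$. This is essentially re-proving (a slightly weaker, ``one copy suffices'' form of) the Lo--Markstr\"{o}m lemma from scratch; the paper's route is shorter because it cites that lemma, while yours is more self-contained and makes the supersaturation mechanism explicit. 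Both arguments isolate the same crucial point you highlight: $\edge$-density is what controls the bad $(k-1)$-sets, which is exactly what fails under $\points$-density and underlies Theorem~\ref{construction}.

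One small bookkeeping remark: the paper embeds $F$ into $K^k_k(m)$ with $m=\max_i|X_i|\le f$ rather than $K^k_k(f)$; using $m$ instead of $f$ everywhere in your argument would give slightly better constants but nothing essential changes.
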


Recalling that the construction of Lenz and Mubayi~\cite{Lenz2016Perfect} showed that $K_{2,2,2}\notin \factor_3^{\points,2}$. 
Since $\factor_3^{\points,1}$ is completely characterised, a natural next step to consider is $\factor_3^{\points,2}$. 
Let us recall some definitions introduced in our work~\cite{DHSWZ}.
Given a $k$-graph $F$ with $k\ge 2$, for a bipartition $\mathcal{P}:=\{V_1,V_2\}$ of $V(F)$ and $e\in E(F)$, we write $\textbf{i} _{\mathcal{P},F}:=(|V_1|, |V_2|)$ and $\textbf{i} _{\mathcal{P},F}(e)=(|e\cap V_1|, |e\cap V_2|)$. 
Given an integer $2\le s\le k-1$, we say that a bipartition $\mathcal{P}$ of $V(F)$ is \emph{$s$-shadow disjoint} if every $e, e'\in E(F)$ with $\textbf{i} _{\mathcal{P},F}(e)\neq \textbf{i} _{\mathcal{P},F}(e')$ satisfy that $|e\cap e'|<s$.
Let $L^{s}_{F}$ be the lattice generated by all $\textbf{i} _{\mathcal{P},F}$ such that $\mathcal{P}$ is $s$-shadow disjoint and we denote $\trans^{s}_k$ as the collection of $k$-graphs $F$ with $(1,-1)\in L^{s}_F$. 
Inspired by Theorem~\ref{factor-cover2}, we prove the following result.

\begin{thm}\label{factor3-2}
 $\cover_{3,>}^{\points ,2}\cap \trans^2_3\subseteq \factor_3^{\points ,2}$.
\end{thm}

In \cite[Section 6]{DHSWZ}, we proved the following result using a random construction.

\begin{observation}{\rm(\cite[Observation 6.4]{DHSWZ}).}\label{construction-trans}
	For $2\le s\le k-1$, $\factor_k^{\points, s} \subseteq \trans^{s}_k$.
\end{observation}

Therefore, by Observation~\ref{construction-trans}, we actually have
\[
\cover_{3,>}^{\points ,2}\cap \trans^2_3\subseteq \factor_3^{\points ,2} \subseteq \cover_{3}^{\points ,2}\cap \trans^2_3.
\]
However, at this moment we are unable to prove $\cover_{3,>}^{\points ,2}= \cover_{3}^{\points ,2}$.
Again, the problem is easier for $3$-partite 3-graphs, in which case we know everything, including the optimal densities.

For 3-graphs $F$, let $p_F$ be the minimum of $p$ such that for every $0<\varepsilon,\alpha <1$, there exists an $n_0$ and $\mu>0$ such that  every $(n, p+\varepsilon,\mu ,\points )$ 3-graph $H$ with $\delta_2(H)\geq \alpha n$, $n\ge n_0$ and $v(F)\mid n$,  satisfies that $H$ has an $F$-factor. 
Then we have the following result.

\begin{thm}\label{3-partite3-graph}
For any $3$-partite $3$-graph $F$, $p_F=0$ if $F\in  \trans^2_3$, and $p_F=1/8$ otherwise. 
\end{thm}

We are unable to extend Theorem~\ref{factor3-2} or Theorem~\ref{3-partite3-graph} to $k\ge 3$, mainly because of our limited understanding of $\trans^s_k$.
However, the case of complete $k$-partite $k$-graphs is easier.
That is, for $k\ge 3$, 
we give a characterization of all complete $k$-partite $k$-graphs in $\factor_k^{\points,k-1}$ and prove that a class of $k$-partite $k$-graphs belongs to $\factor_k^{\points,k-1}$. 
For this, let us recall some definitions introduced by Mycroft~\cite{Mycroft-par}.
Given a $k$-partite $k$-graph $F$, we define 
\[\mathcal{S}(F):=\bigcup_{\chi} \{|V_1|, \dots, |V_k|\},\]
where in each case the union is taken over all $k$-partite realisations $\chi$ of $F$ into vertex classes $V_1, \dots, V_k$ of $V(F)$.
The greatest common divisor of $\mathcal{S}(F)$, is denoted by $\gcd(\mathcal{S}(F))$.
For example, let $K_{a_1,\dots ,a_k}$ denote the complete $k$-partite $k$-graph with parts of size $a_1,\dots ,a_k$. Observe that $K_{a_1,\dots ,a_k}$ has only one $k$-partite realisation up to permutations of the vertex classes $V_1, \dots, V_k$, so $\gcd(\mathcal{S}(K_{a_1,\dots ,a_k}))=\gcd(a_1,\dots ,a_k)$.
For all $k$-partite $k$-graphs $F$ with $\gcd(\mathcal{S}(F))=1$, we have the following result.

\begin{thm}\label{gcd}
If a $k$-partite $k$-graph $F$ satisfies $\gcd(\mathcal{S}(F))=1$, then $F\in \factor_k^{\points,k-1}$.
Moreover, if $F$ is a complete $k$-partite $k$-graph, then $F\in \factor_k^{\points,k-1}$ if and only if $\gcd(\mathcal{S}(F))=1$.
\end{thm}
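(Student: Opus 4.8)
The plan is to handle the two assertions separately: the sufficiency of $\gcd(\mathcal S(F))=1$ by the absorbing method, and its necessity for complete $k$-partite $F$ by a divisibility-barrier construction generalising that of Lenz and Mubayi. The first observation is that $\gcd(\mathcal S(F))=1$ already forces $F\in\trans^{k-1}_k$. Indeed, for any $k$-partite realisation of $F$ into classes $W_1,\dots,W_k$ and any $I\subseteq[k]$, the bipartition $\mathcal P_I=\{\bigcup_{i\in I}W_i,\ \bigcup_{i\notin I}W_i\}$ is $(k-1)$-shadow disjoint, because every edge of $F$ meets $\bigcup_{i\in I}W_i$ in exactly $|I|$ vertices, so no two edges carry distinct index vectors; hence $\mathbf{i}_{\mathcal P_I,F}\in L^{k-1}_F$. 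Subtracting $\mathbf{i}_{\mathcal P_{\{j\}},F}$ from $\mathbf{i}_{\mathcal P_{\{i,j\}},F}$ (valid for $k\ge 3$) gives $(|W_i|,-|W_i|)\in L^{k-1}_F$ for every class of every realisation, so a B\'ezout combination of these yields $(1,-1)\in L^{k-1}_F$, that is $F\in\trans^{k-1}_k$.

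With $F\in\trans^{k-1}_k$ in hand, the first assertion follows for $k=2$ from~\cite{Koml1997Blow} and for $k=3$ from Corollary~\ref{3-partite3-graph}. For $k\ge 4$ we prove the analogue of Corollary~\ref{3-partite3-graph} by running the absorbing method on an arbitrary $(n,p,\mu,\points)$ $k$-graph $H$ with $\delta_{k-1}(H)\ge\alpha n$ and $f\mid n$. Two ingredients are cheap here: $\pi_{\points}(F)=0$, since a positive-density $\points$-dense $k$-graph embeds every fixed $k$-partite $k$-graph by a routine counting argument; and $F\in\cover_k^{\points,k-1}$, since the codegree bound alone forces each vertex into a copy of $F$ (its link has positive density, and a complete $(k-1)$-partite sub-hypergraph is found in the common link of suitably many vertices). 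Using $F\in\trans^{k-1}_k$ one then shows that in $H$ all vertices lie in a single $F$-reachability class witnessed by polynomially many bounded-size connectors — the element $(1,-1)$ of $L^{k-1}_F$ being precisely what allows a vertex $u$ to be exchanged for a vertex $v$ by recombining a bounded number of copies of $F$ taken in suitable realisations — and hence that $H$ has an $F$-absorbing set of size $o(n)$. A separate lemma, built from $F\in\cover_k^{\points,k-1}$ and $\points$-density, provides an almost-perfect $F$-tiling of the remainder (a fractional $F$-tiling covering all but $o(n)$ vertices, then rounded); the absorber swallows the $o(n)$ leftover, and since $f\mid n$ this yields an $F$-factor.

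For the necessity, let $F=K_{a_1,\dots,a_k}$ with $d:=\gcd(a_1,\dots,a_k)>1$; we must exhibit, for some fixed $p\in(0,1)$ and all $\mu>0$, $n_0$ and $n\ge n_0$ with $f\mid n$, an $(n,p,\mu,\points)$ $k$-graph $H$ with $\delta_{k-1}(H)\ge\alpha n$ and no $F$-factor. Partition $V$ into classes $V_0,\dots,V_{d-1}$ of carefully chosen sizes and superimpose a random $\mathbb Z_d$-valued auxiliary structure, arranged so that $H$ is $\points$-dense with linear codegree — the randomisation is essential, since a ``clean'' $d$-partite host would fail $\points$-density, exactly as in the Lenz--Mubayi construction — while the profile $(|C\cap V_0|,\dots,|C\cap V_{d-1}|)$ of every copy $C$ of $F$ in $H$ is confined to a coset of a proper sublattice of $\{\mathbf{x}\in\mathbb Z^d_{\ge 0}:\textstyle\sum_j x_j=f\}$, a ``mod $d$'' constraint inherited from $d\mid a_i$ for all $i$. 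An $F$-factor would force $(|V_0|,\dots,|V_{d-1}|)$ to lie in the $n/f$-fold sumset of these admissible profiles, so choosing the $|V_j|$ outside it (compatibly with $f\mid n$) rules out any $F$-factor; for $d=2$, $k=3$ this recovers the Lenz--Mubayi construction.

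The principal obstacle is the general-$k$ absorbing step: converting $F\in\trans^{k-1}_k$ into the reachability statement inside a host that is only $\points$-quasi-random with linear codegree requires delicate embedding lemmas — to place copies of $F$ with prescribed vertices in prescribed classes and then recombine a bounded number of them so that two configurations become interchangeable — where, with only $\points$-density available, even locating one copy of $F$ through a given vertex rests entirely on the codegree bound. A secondary difficulty is verifying $\points$-density and linear codegree of the barrier construction for general $d$ and $k$, the mod-$2$ parity trick of Lenz--Mubayi having to be replaced by a $\mathbb Z_d$-valued randomisation.
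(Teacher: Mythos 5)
Your proposal has the right skeleton (cover + $\pi_{\points}=0$ + a transferral condition, then absorbing), but both directions have genuine gaps, and in both cases the paper takes a substantially shorter route that avoids the very difficulties you flag.

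For sufficiency, you first derive the abstract membership $F\in\trans^{k-1}_k$ from $\gcd(\mathcal S(F))=1$ (this derivation is correct), and then propose to prove the $k\ge 4$ analogue of Corollary~\ref{3-partite3-graph}, i.e.\ to show that $\cover_k^{\points,k-1}\cap\trans^{k-1}_k\cap\{\pi_{\points}(F)=0\}\subseteq\factor_k^{\points,k-1}$. That analogue is not in the paper and is the real obstacle: Lemma~\ref{transferral3-2} converts $F\in\trans^2_3$ into the in-host lattice condition $(1,-1)\in L^{\lambda}_{\mathcal P,F}(H)$ only for $k=3$, and its proof leans on the $3$-graph regularity lemma, the reduced-hypergraph machinery, and the embedding lemmas of Reiher--R\"odl--Schacht (Lemmas~\ref{alter}, \ref{Rembed}, \ref{biembed}). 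For $k\ge 4$ this toolkit is not developed here. The paper sidesteps this entirely: for $k$-partite $F$ with $\gcd(\mathcal S(F))=1$, Lemma~\ref{k-transferral} establishes $(1,-1)\in L^{\lambda}_{\mathcal P,F}(H)$ directly by supersaturation (Proposition~\ref{supersaturation}). Given any partition $\{X,Y\}$, $\points$-density yields $\eta n^k$ edges in each of $X^k$, $Y\times X^{k-1}$, etc., hence $\lambda n^f$ copies of each $k$-partite realisation $K_{a_{i1},\dots,a_{ik}}$ of $F$ with one prescribed class in $Y$ and the rest in $X$; subtracting gives $(a_{ij},-a_{ij})\in L^{\lambda}_{\mathcal P,F}(H)$ for every $i,j$, and B\'ezout over $\mathcal S(F)$ gives $(1,-1)$. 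No regularity and no detour through $\trans^{k-1}_k$ is needed.

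For necessity, you propose a new $\mathbb Z_d$-valued probabilistic barrier construction generalising Lenz--Mubayi, but you leave unverified both its $\points$-density and its linear codegree, which you yourself note as the ``secondary difficulty.'' The paper does not build a new construction. Instead it argues lattice-theoretically: if $\mathcal P=\{V_1,V_2\}$ is any $(k-1)$-shadow disjoint bipartition of $V(F)$ for a \emph{complete} $k$-partite $F$ with classes $U_1,\dots,U_k$, then for $x,y\in U_i$ the two edges $\{x\}\cup\{u_j:j\ne i\}$ and $\{y\}\cup\{u_j:j\ne i\}$ share $k-1$ vertices, forcing $x,y$ to the same side; hence each $U_i$ lies wholly in $V_1$ or $V_2$, so $\gcd(\mathcal S(F))$ divides both $|V_1|$ and $|V_2|$, and $(1,-1)\notin L^{k-1}_F$, i.e.\ $F\notin\trans^{k-1}_k$. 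It then invokes Observation~\ref{construction-trans} ($\factor_k^{\points,s}\subseteq\trans^s_k$), which already packages the necessary random construction and whose hypotheses are checked once and for all in~\cite{DHSWZ}. Your plan would essentially re-derive that Observation from scratch in a special case; the gap is that the derivation is sketched, not completed.
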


\noindent {\bf Remark:} Note that for a non-complete $k$-partite $k$-graph $F$, $F\in \factor_k^{\points,k-1}$ does not imply  $\gcd(\mathcal{S}(F))=1$ in general. For example, consider $M_2$ as a matching of size $2$. By Theorem~\ref{factor-cover1},  $M_2\in \factor_k^{\points,1}\subseteq  \factor_k^{\points,k-1}$ but $\gcd(\mathcal{S}(M_2))=2$.

\subsection{Related work.} 
As mentioned above, minimum degree conditions forcing perfect $F$-tilings in graphs have a long history and have been well studied. 
In the past decade there has been substantial interest in extending these results to $k$-graphs (see a recent survey~\cite{Zhao-survey}).
However, only a handful of optimal minimum degree thresholds are known. There are also several results related to our setting, namely, the recent results on locally dense (hyper)graphs. These are notions posed by Erd\H{o}s and  S\'os~\cite{E-S1982} as strengthenings of \emph{having sublinear independence number}. Namely, a $k$-graph $H$ on $n$ vertices is \emph{locally dense} if it satisfies that for every subset $X\subseteq V(H)$, $H[X]$ contains at least $p|X|^k-\mu n^k$ edges, where $1/n\ll \mu \ll p$. Recently, Reiher and Schacht~\cite{locally-clique} gave the minimum degree condition forcing a $K_k$-factor in locally dense graphs. Staden and Treglown~\cite{bandwidth} proved a version of the Bandwidth theorem for locally dense graphs. Similar results on perfect matchings and tilings in $k$-graphs have been considered by Han~\cite{PMtilings}.

Other notions of quasi-randomness in $k$-graphs have also been studied extensively.
One prime example is the work of Keevash~\cite{keevash-design-1} and Glock et al.~\cite{glock-design-2016} on the Existence Conjecture of Block Designs.
In particular, both of their results apply to $k$-graphs under a significantly stronger quasi-randomness condition than the one considered in this paper (a constant number of $(k-1)$-sets have good joint degrees as one expects in random $k$-graphs).

At last, although (following Lenz--Mubayi~\cite{Lenz2016Perfect}) we defined the quasi-randomness with only one-sided inequalities, all known constructions are based on random constructions, which provide two-sided edge counts.
That is, current results in this line can not be improved by requiring two-sided inequalities in the definition of quasi-randomness e.g. in~\eqref{eq:count} or~\eqref{eq:count1}. 
On the other hand, for every notion of quasi-randomness that has been studied, Lenz and Mubayi~\cite[Theorem 1]{lenz2016perfect-II} identified a large family of $k$-graphs $F$ so that every quasirandom $k$-graph $H$ admits an $F$-factor.

\subsection*{Organization}
The rest of this paper is organized as follows.
In the next section, we introduce proof ideas and give a probabilistic construction to prove Theorem~\ref{construction}. 
We give a proof of Theorem~\ref{dot-edge-partite} in Section~\ref{dot-edge-partite-thm}.
In Section~\ref{tools}, we prove an absorbing lemma, which is the main tool in the proofs of Theorem~\ref{1/8-dot-codegree}, Theorem~\ref{factor3-2} and Theorem~\ref{gcd}. We review the hypergraph regularity method in Section~\ref{regular-lemma}.
The following three sections contain proofs of Theorem~\ref{1/8-dot-codegree}, Theorem~\ref{factor3-2} and Theorem~\ref{gcd} respectively. At the end of Section~\ref{3-partite3-pf}, we will give the proof of Theorem~\ref{3-partite3-graph} using  a probabilistic construction combined with  Theorem~\ref{1/8-dot-codegree} and Theorem~\ref{factor3-2}.   
We include some discussions and further problems in the last section.

\section{Proof ideas and a proof of Theorem~\ref{construction}}

\subsection{Proof ideas}
Although our constructions of $F$-factors follow the popular framework of the absorption method (indeed a variant developed by the second author~\cite{Han2017Perfect}), 
the proof does contain novel components and reveals interesting phenomena in quasi-random $k$-graphs.
As mentioned above, for $\mathscr{S}\in \{\points, \edge\}$, $1\leq s \leq k-1$ and $k$-graphs $F$ with $F\in \factor_k^{\mathscr{S} ,s}$,
 one variant of the Tur\'an problem  needs to be considered:
\begin{enumerate}
	\item[{\rm ($\square$)}] $F\in \cover_{k}^{\mathscr{S} ,s}$, that is, every vertex must be contained in a copy of $F$ in $H$.
\end{enumerate}
When $s=1$ which is the minimum degree condition, we know that the property  {\rm ($\square$)} is enough to guarantee the existence of $F$-factors by the Theorems~\ref{factor-cover1} and ~\ref{factor-cover2}. However, for $s>1$, we need $F\in \cover_{k,>}^{\mathscr{S} ,s}$ to guarantee the existence of $F$-factors in our proofs.
In addition,  the verification of {\rm ($\square$)} is straightforward in our proofs for $k$-partite $k$-graphs (for general $k$-graphs this is very challenging and actually the bottleneck to a solution of Problem~\ref{prob} for $k\ge 4$).
For Theorem~\ref{factor3-2}, another variant of the Tur\'an problem also needs to be considered:
\begin{enumerate}
\item[{\rm ($\diamondsuit$)}] $F\in \trans^2_3$, from which we will deduce that given any bipartition $(X, Y)$ of $V(H)$, there are certain types of embeddings of $F$ that give rise to the existence of a \emph{transferral}.
\end{enumerate}
The   point will be made precise in Section~\ref{3-partite3-pf}.
Both these two properties are clearly strengthenings of the Tur\'an problem and are crucial in the construction of absorbing sets.
For the study of {\rm ($\diamondsuit$)} in Theorems~\ref{1/8-dot-codegree} and~\ref{factor3-2},
we use the regularity method and embedding techniques pioneered by Reiher--R\"odl--Schacht~\cite{Reiher2018Hypergraphs} in quasi-random $3$-graphs (see Sections~\ref{1/8} and~\ref{3-partite3-pf}).

\subsection{A proof of Theorem~\ref{construction}}
Now we prove Theorem~\ref{construction} using the following example. \\
\noindent {\bf Construction.} For $n\in \mathbb{N}$, define a probability distribution $H(n)$ on $3$-graphs of order $n$ as follows.
Let $G\in \mathbb{G}(n-1,q)$ be the random graph on $n-1$ vertices with $q\in(0,1)$.
Now let the vertex set of $H(n)$ be $V(G)\cup \{z\}$ where $z$ is a new vertex and we define $E(H(n))$ as follows. For each $3$-set $e\subseteq V(G)$, if $G[e]$ is a triangle in $G$, then add $e$ into $E(H(n))$; for each pair $\{u,v\}\subseteq V(G)$, if $\{u,v\}\notin E(G)$, then
add $\{u,v,z\}$ into $E(H(n))$.

Therefore, for each $3$-set $e$ not containing $z$, we have $e \in E(H(n))$ with probability $q^3$. Let $X_1, X_2, X_3\subset V(H(n)) \setminus \{z\}$, then the expected value of $e_{\points}(X_1,X_2, X_3)$ is at least $q^3|X_1|(|X_2|-1)(|X_3|-2)$.
For each vertex $w\in V(H(n))\setminus \{z\}$, the expected value of $\deg(w)$ in $H(n)$ is at least $q^3\binom{n-2}{2}$, and the expected value of $\deg(z)$ in $H(n)$ is $(1-q)\binom{n-1}{2}$.
Let $\alpha=\min\{\frac{q^3}{4},\frac{1-q}{4}\}$. By concentration inequalities (e.g. Janson's inequality) and the union bound, for every $\mu>0$ and sufficiently large $n$, there exists $H\in H(n)$ such that $H$ is an $(n,q^3,\mu,\points)$ $3$-graph and $\delta_1(H)\ge \alpha n^2$.

Now let us consider the vertex $z$ in $H$. Suppose that there exists a copy $F'$ of $F$ in $H$ and an isomorphic mapping  $g: V(F)\to V(F')$ such that $g(v)=z$. Recalling the condition of $F$, there is a vertex $u\in V(F)$ such that $N_F(u)\cap N_F(v)\neq \emptyset$. Then $N_H(g(u))\cap N_H(z)\neq \emptyset$, which
contradicts with the construction of $H$ that $N(z) \cap N(w)=\emptyset$ for any $w\in V(H)$.
Therefore, $H$ does not have an $F$-factor.

\section{The proof of Theorem~\ref{dot-edge-partite}}\label{dot-edge-partite-thm}
In this section, our aim is to prove Theorem~\ref{dot-edge-partite}.
Throughout the proof, we use a notion introduced by Lo and Markstr\"{o}m~\cite{Lo2015}. Let $H$ be a $k$-graph on $n$ vertices.
Given two vertices $u, v\in V(H)$ and a constant $\eta>0$, a $(k-1)$-set $S\in N(u)\cap N(v)$ is said to be \textit{$\eta$-good} for $\{u,v\}$ if $\deg(S)\ge \eta n$. Otherwise, $S$ is \textit{$\eta$-bad} for $\{u,v\}$. In particular, we say that $\{u, v\}$ is \textit{$\eta$-good} if the number of $\eta$-good $(k-1)$-sets for $\{u, v\}$ is at least $\eta n^{k-1}$. Otherwise, we say $\{u,v\}$ is \textit{$\eta$-bad}.
We have the following proposition.

\begin{prop}\label{x-good}
Given $0<\alpha<1$, there exists an integer $n_0$ such that the following holds for $n\ge n_0$.
Let $H$ be a $k$-graph of order $n$ with $\delta_1(H)\ge \alpha \binom{n-1}{k-1}$. If there exists $\alpha'>0$ such that all but at most $\frac{\alpha}{2}\binom{n-1}{k-1}$ $(k-1)$-sets $S\subset V(H)$ satisfy $\deg_H(S)\ge \alpha'n$, then for any $0<\eta<\frac{\alpha\alpha'}{4k!}$ and every $v\in V(H)$, there exists $u\in V(H)\setminus \{v\}$ such that $\{u,v\}$ is $\eta$-good in $H$.
\end{prop}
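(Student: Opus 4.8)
The plan is to fix an arbitrary vertex $v\in V(H)$ and to locate a suitable partner $u$ by a double-counting/averaging argument carried out inside the link of $v$. The first move is to clean up the link: set $\mathcal{G}_v:=\{S\in N_H(v):\deg_H(S)\ge\alpha' n\}$. From $|N_H(v)|=\deg_H(v)\ge\delta_1(H)\ge\alpha\binom{n-1}{k-1}$ together with the hypothesis that at most $\tfrac{\alpha}{2}\binom{n-1}{k-1}$ of \emph{all} $(k-1)$-subsets of $V(H)$ have codegree below $\alpha' n$, we obtain $|\mathcal{G}_v|\ge\tfrac{\alpha}{2}\binom{n-1}{k-1}$.

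Next I would count the pairs $(S,u)$ with $S\in\mathcal{G}_v$, $u\in V(H)\setminus\{v\}$ and $S\cup\{u\}\in E(H)$. Each $S\in\mathcal{G}_v$ already has $v$ as a neighbour and satisfies $\deg_H(S)\ge\alpha' n$, so it is counted at least $\alpha' n-1$ times (the vertices $u\ne v$ with $S\cup\{u\}\in E(H)$; note $u\notin S$ automatically); hence there are at least $\tfrac{\alpha}{2}\binom{n-1}{k-1}(\alpha' n-1)$ such pairs. Since there are only $n-1$ choices for $u$, averaging yields some $u^{*}\in V(H)\setminus\{v\}$ with $S\cup\{u^{*}\}\in E(H)$ for at least $\tfrac{1}{n-1}\cdot\tfrac{\alpha}{2}\binom{n-1}{k-1}(\alpha' n-1)$ sets $S\in\mathcal{G}_v$. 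Every such $S$ lies in $N_H(v)\cap N_H(u^{*})$ — in particular $u^{*},v\notin S$ — and has $\deg_H(S)\ge\alpha' n\ge\eta n$ because $\eta<\alpha\alpha'/(4k!)\le\alpha'$ (using $\alpha<1$); so $S$ is an $\eta$-good $(k-1)$-set for $\{u^{*},v\}$.

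It then remains to check that the number of these sets is at least $\eta n^{k-1}$, which forces $\{u^{*},v\}$ to be $\eta$-good and finishes the proof. For $n$ sufficiently large (in particular $\alpha' n\ge 2$) one has $\binom{n-1}{k-1}\ge n^{k-1}/\big(2(k-1)!\big)$ and $\alpha' n-1\ge\alpha' n/2$, so the count is at least $\tfrac{\alpha\alpha'}{8(k-1)!}\,n^{k-1}$, and $\tfrac{1}{8(k-1)!}\ge\tfrac{1}{4k!}>\tfrac{\eta}{\alpha\alpha'}$ since $\tfrac{4k!}{8(k-1)!}=\tfrac{k}{2}\ge 1$ for $k\ge 2$. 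The argument is entirely elementary; the only point that needs a little care is this last step of constant bookkeeping — one must make sure that the slack between $\tfrac{1}{8(k-1)!}$ and $\tfrac{1}{4k!}$ (equivalently the factor $\tfrac{k}{2}\ge 1$) absorbs the losses incurred by replacing $\binom{n-1}{k-1}$ with $n^{k-1}/(k-1)!$ and by the stray $-1$'s in the counting, and this is exactly where the hypotheses $k\ge 2$ and $\eta<\alpha\alpha'/(4k!)$ are used.
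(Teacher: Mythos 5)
Your proof is correct and is essentially the same as the paper's: both restrict to $N^*(v)=\mathcal G_v=\{S\in N(v):\deg(S)\ge\alpha'n\}$, count pairs $(S,u)$ with $S\in N^*(v)$ and $S\cup\{u\}\in E(H)$, and extract a suitable $u$ by averaging. The only cosmetic difference is that you phrase the pigeonhole step directly (``there exists $u^*$ receiving at least the average'') while the paper assumes every $\{u,v\}$ is $\eta$-bad and reaches a contradiction by the same computation; the constant bookkeeping via $4k!=8(k-1)!\cdot(k/2)$ matches the paper's as well.
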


\begin{proof}
Suppose that $H$ is a $k$-graph satisfying the above conditions.
Let
\[
\mathcal{S}=\left\{S\in [V]^{k-1} : \deg(S)<\alpha'n \right\}.
\]
Then $|\mathcal{S}|<\frac{\alpha}{2}\binom{n-1}{k-1}$. For every $v\in V(H)$, we have $|N(v)|\ge \alpha \binom{n-1}{k-1}$ since $\delta_1(H)\ge \alpha \binom{n-1}{k-1}$.
Let $N^*(v):=N(v)\setminus \mathcal{S}$. Then $|N^*(v)|\ge \frac{\alpha}{2}\binom{n-1}{k-1}$ and every $(k-1)$-set $S\in N^*(v)$ satisfies $\deg(S)\ge \alpha'n$. Assume that for any $u\in V(H)\setminus \{v\}$, $\{u,v\}$ is \textit{$\eta$-bad}, namely $|N(u)\cap N^*(v)|<\eta n^{k-1}$. By double counting, we have
\[
\frac{\alpha}{2}\binom{n-1}{k-1} \alpha'n \le \sum_{S\in N^{*}(v)}{\deg(S)}< | V(H)\setminus \{v\}|\eta n^{k-1}+|N^{*}(v)|,
\]
which yields $\eta>\frac{\alpha\alpha'}{4k!}$, a contradiction. 
Therefore, there exists a vertex $u\in V(H)\setminus \{v\}$ such that $\{u,v\}$ is $\eta$-good in $H$ for all $0<\eta<\frac{\alpha\alpha'}{4k!}$.
\end{proof}

Let $K^k_k(m)$ be a complete $k$-partite $k$-graph with $m$ vertices in each part.
Our proof of Theorem~\ref{dot-edge-partite} is based on the following special form of a lemma from~\cite{Lo2015}, which tells that if $\{u,v\}$ is $\eta$-good, then $u$ is in a copy of $K^k_k(m)$.

\begin{lemma}{\rm{(}\cite[Lemma 4.2]{Lo2015}\rm{)}}.\label{good-reach.}
Let $k,m\ge 2$ be integers and $\eta\in (0,1)$. There exists a constant $\beta_0=\beta_0(k,m,\eta)>0$ and an integer $n_0=n_0(k,m,\eta)$ such
that for every $k$-graph $H$ of order $n\ge n_0$, if $\{u,v\}$ is $\eta$-good in $H$ for $u,v\in V(H)$, then there are at least $\beta_0 n^{km-1}$ $(km-1)$-sets $W$ such that both $H[\{u\}\cup W]$ and $H[\{v\}\cup W]$ contain $K^k_k(m)$. In particular, $u$ is  in at least $\beta_0 n^{km-1}$ copies of $K^k_k(m)$.
\end{lemma}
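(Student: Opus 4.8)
The statement to prove is Lemma~\ref{good-reach.}, which is cited from Lo--Markstr\"om~\cite{Lo2015}. Since this is an external citation, I should still sketch how one proves it.

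The plan is to use a supersaturation / weighted counting argument combined with the definition of $\eta$-good pairs. Recall $\{u,v\}$ being $\eta$-good means there are at least $\eta n^{k-1}$ many $(k-1)$-sets $S$ that lie in $N(u)\cap N(v)$ and additionally satisfy $\deg_H(S)\ge \eta n$. Call such sets the \emph{good links} of $\{u,v\}$.

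First, I would show that one can greedily build a copy of $K^k_k(m)$ containing $u$ (and similarly one containing $v$) with many ``free'' choices. The idea: pick a good link $S_1$ of $\{u,v\}$; since $\deg_H(S_1)\ge \eta n$, there are many vertices $w$ with $S_1\cup\{w\}\in E(H)$. Iterating this $k(m-1)$ or so times, while maintaining at each step that we are choosing from a polynomial-sized set of extensions, one assembles the desired clique. More robustly, a hypergraph supersaturation argument (e.g.\ via a counting lemma for $K^k_k(m)$ in dense host configurations, or via the Erd\H{o}s--Ko--Rado-type counting in the ``neighborhood link'' of $\{u,v\}$) shows that not just one but $\Theta(n^{f-1})$ sets $W$ of size $f-1$ work: the point is that once we fix a good link $S$ for $\{u,v\}$ and a common neighbor structure, the number of completions is a positive fraction of $n^{f-1}$, and summing over the $\ge \eta n^{k-1}$ good links gives the bound $\beta n^{f-1}$ after accounting for overcounting.

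The key steps, in order: (i) extract from $\eta$-goodness a large ``link hypergraph'' $L$ on $V(H)\setminus\{u,v\}$ whose edges are good links, with $e(L)\ge \eta n^{k-1}$ and each edge extendable into an actual edge of $H$ in $\ge \eta n$ ways; (ii) use a (k-1)-uniform supersaturation result to find $\ge \delta n^{k(m-1)+\cdots}$ embeddings of the ``core'' of $K^k_k(m)$ minus one apex into $L$, then extend via the $\deg_H(S)\ge \eta n$ condition to genuine copies of $K^k_k(m)$ simultaneously attachable to $u$ and to $v$; (iii) bookkeep the count so the family of witnessing $(f-1)$-sets $W$ has size $\ge \beta n^{f-1}$ for a suitable $\beta=\beta_0(k,m,\eta)$; (iv) deduce the ``in particular'' clause by taking any single such $W$.

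The main obstacle I expect is step (ii): controlling the simultaneous embedding so that the \emph{same} set $W$ works for both $u$ and $v$ --- this is exactly what $\eta$-goodness buys us, since the good links are common to $N(u)$ and $N(v)$, but one must be careful that the greedy extensions of each good link into a $K^k_k(m)$ avoid collisions and remain valid for both apexes. Handling the overcounting (many embeddings can share the same vertex set $W$) and ensuring the count stays a positive power-of-$n$ fraction requires a clean double-counting or dependent random choice step, which is the technical heart of the argument in~\cite{Lo2015}.
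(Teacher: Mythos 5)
This lemma is cited from Lo--Markstr\"om~\cite{Lo2015} and the paper gives no proof of its own, so there is nothing in the paper to compare against line-by-line. Your sketch correctly recognizes this, and it captures the right high-level ingredients: restricting attention to the good links of $\{u,v\}$ (a dense $(k-1)$-graph with the additional property that each of its edges has $\deg_H\geq \eta n$), running a supersaturation argument, and using the fact that good links are simultaneously in $N(u)$ and $N(v)$ to make the same $W$ work for both vertices.

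However, as a proof the sketch has a genuine gap in step~(ii), which you yourself flag as the hard part. The naive plan --- greedily extend a good link $S$ vertex by vertex using $\deg_H(S)\geq \eta n$ --- does not obviously terminate in a copy of $K^k_k(m)$: to place all $m$ vertices of the ``apex'' part one needs them to lie in the common neighborhood of \emph{all} $m^{k-1}$ transversals of the other parts, and each individual constraint being satisfied by $\geq\eta n$ vertices does not give a common intersection of size $m$ without a further dependent-random-choice or K\H{o}v\'ari--S\'os--Tur\'an style averaging step (one must start from a larger complete $(k-1)$-partite $(k-1)$-graph and then shrink). Similarly, invoking ``a $(k-1)$-uniform supersaturation result'' without specifying what $(k-1)$-partite pattern is being supersaturated and how its transversals interact with the apex and with both $u$ and $v$ leaves open exactly the collision/overcounting issues you note. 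Lo and Markstr\"om resolve these by a concrete choice of auxiliary dense $k$-graph built from good links and their $H$-extensions, to which Erd\H{o}s' supersaturation theorem is applied directly, followed by a deterministic pruning to the right part sizes; your sketch is compatible with that argument but does not actually carry it out. In short: the outline is pointed in the right direction and the ``in particular'' clause follows trivially once the main count is established, but steps (ii)--(iii) are not proved and are precisely where the real work lies.
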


Theorem~\ref{dot-edge-partite} easily follows from Theorem~\ref{factor-cover2}, Proposition~\ref{x-good} and Lemma~\ref{good-reach.}.

\begin{proof}[Proof of Theorem~\ref{dot-edge-partite}]
Suppose that $F$ is a $k$-partite $k$-graph with vertex classes $X_1,\dots,X_k$.
By Theorem~\ref{factor-cover2}, it suffices to show that $F\in \cover^{\edge,1}_{k}$.
Let $m:=\max\{|X_i|: i\in[k]\}$, then $F$ is a sub-hypergraph of $K^k_k(m)$. Next, our goal is to show that given $0<p,\alpha<1$, we take $\mu>0$ small enough and $n$ large enough such that every vertex $v$ in an $(n,p,\mu,\edge)$ $k$-graph $H$ with $\delta_1(H)\geq \alpha n^{k-1}$ is contained in a copy of $K^k_k(m)$.

Given $0<p,\alpha<1$, we select $0<\mu<\eta<\frac{\alpha p}{8k!}$. Next we apply Lemma~\ref{good-reach.} and get the returned constant $\beta_0=\beta_0(k,m,\eta)>0$. Let $H$ be an $(n,p,\mu,\edge)$ $k$-graph with $\delta_1(H)\geq \alpha n^{k-1}$ and $v(F)\mid n$ and set $V:=V(H)$. Let
\[
\mathcal{S}=\left\{S\in [V]^{k-1} : \deg(S)<\frac{p}{2}n \right\}.
\]
Since $H$ is $(p,\mu, \edge)$-dense, we obtain that
\[
p|\mathcal{S}||V|-\mu n^k\le e_{\edge}(\mathcal{S},V)<|\mathcal{S}|\frac{p}{2}n,
\]
which implies that $|\mathcal{S}|<\frac{2\mu}{p}n^{k-1}<\frac{\alpha}{2}\binom{n-1}{k-1}$.
We apply Proposition~\ref{x-good} with $\alpha'=p/2$ to $H$ and obtain that for every $v\in V(H)$, there exists $u\in V(H)\setminus \{v\}$ such that $\{u,v\}$ is $\eta$-good in $H$. By Lemma~\ref{good-reach.}, 
$v$ is contained in a copy of $K^k_k(m)$ and we are done.
\end{proof}

\section{Tools}\label{tools}
In this section, we introduce some tools to prove Theorems~\ref{1/8-dot-codegree}, ~\ref{factor3-2} and~\ref{gcd}.
The key tool is the lattice-based absorption method developed by Han~\cite{Han2017Perfect}, which builds on the absorbing method initiated by R\"{o}dl, Ruci\'{n}ski and Szemer\'{e}di~\cite{R2015A}.
Throughout the paper, we write  $a\ll b\ll c$ to mean that we can choose the positive constants  $a, b, c$ from right to left. More precisely, there are increasing functions
$f$ and $g$ such that, given $c$, whenever we choose some $b\le  f(c)$ and $a\le g(b)$, the subsequent
statement holds. Hierarchies of other lengths are defined similarly.

The absorption approach splits the proofs into the following two parts: one is on finding an almost perfect $F$-tiling in $H$ by an almost perfect tiling lemma (Lemma~\ref{Almost-factor}), and the other is on ``finishing up" the perfect $F$-tiling by an absorbing lemma (Lemma~\ref{Absorbing-Lemma}).

\begin{lemma}{\rm(Almost Perfect Tiling {\cite[Lemma 2.1]{DHSWZ}}).} \label{Almost-factor}
Given $0< p,\alpha <1$ and a $k$-graph $F$ satisfying $\pi_{\points}(F)=0$, for any $0 <\omega < 1$, there exists an $n_0$ and $\mu > 0$ such that the following holds for $n\geq n_0$. If $H$ is an $(n,p,\mu,\points)$ $k$-graph, then there exists an $F$-tiling that covers all but at most $\omega n$ vertices of $H$.
\end{lemma}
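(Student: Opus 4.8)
The plan is a greedy extraction argument that uses only the hypothesis $\pi_{\points}(F)=0$ together with the elementary observation that $(p,\mu,\points)$-density is inherited, up to a controlled loss in the error parameter, by large induced subgraphs. Note that the parameter $\alpha$ plays no role and no degree hypothesis is needed; it appears in the statement only for uniformity with the other lemmas.

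First I would unpack the hypothesis. Since $p>0=\pi_{\points}(F)$, the value $p$ does not lie in the set defining $\pi_{\points}(F)$, so by negating that condition there exist $\mu^{*}>0$ and $n_{1}\in\mathbb{N}$ such that every $(m,p,\mu^{*},\points)$ $k$-graph with $m\ge n_{1}$ contains a copy of $F$. I would then set $\mu:=\mu^{*}\omega^{k}$ and pick $n_{0}$ large enough that $\omega n_{0}\ge n_{1}$. The key elementary fact is the following: if $H$ is an $(n,p,\mu,\points)$ $k$-graph and $U\subseteq V(H)$ with $m:=|U|\ge\omega n$, then $H[U]$ is an $(m,p,\mu^{*},\points)$ $k$-graph, because for all $X_{1},\dots,X_{k}\subseteq U$ we have $e_{\points}(X_{1},\dots,X_{k})\ge p|X_{1}|\cdots|X_{k}|-\mu n^{k}$ and $\mu n^{k}=\mu^{*}\omega^{k}n^{k}\le\mu^{*}m^{k}$ since $m\ge\omega n$.

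The argument is then a straightforward loop: starting from $H$, while the set $U$ of currently uncovered vertices satisfies $|U|\ge\omega n$, use the two observations above (noting $|U|\ge\omega n\ge\omega n_{0}\ge n_{1}$) to locate a copy of $F$ inside $H[U]$, add it to our tiling, and delete its $v(F)$ vertices from $U$. Each round removes $v(F)$ vertices, so the loop terminates, and it can only terminate once $|U|<\omega n$. The collected copies are pairwise vertex disjoint and hence form an $F$-tiling of $H$ covering all but fewer than $\omega n$ vertices, which is the desired conclusion.

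There is essentially no serious obstacle: the only point requiring care is the bookkeeping of constants so that the additive error $\mu n^{k}$ stays negligible against $|U|^{k}$ throughout the (at most $n$) extraction rounds, which is exactly what the choice $\mu=\mu^{*}\omega^{k}$ arranges uniformly in $U$. (If one wanted the stronger statement that $H$ admits a tiling with nearly as many parts as a perfect $F$-tiling, a supersaturation step would be needed, but for the stated ``all but $\omega n$ vertices'' bound extracting a single copy per round is enough.)
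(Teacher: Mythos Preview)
Your proof is correct. The paper does not supply its own proof of this lemma but instead cites it from~\cite{DHSWZ}; the greedy extraction argument you give---unpacking $\pi_{\points}(F)=0$ to get a $\mu^{*}$ and $n_{1}$, then setting $\mu=\mu^{*}\omega^{k}$ so that the $(p,\mu^{*},\points)$-density persists on any induced subgraph with at least $\omega n$ vertices---is the standard proof and almost certainly what the cited reference does as well.
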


\subsection{Absorbing lemma}
Before giving the absorbing lemma, we need some definitions introduced by Keevash and Mycroft~\cite{Ke2015}.

Let $r, f, k$ be integers with $f\geq k\geq 2$ and let $F$ be a $k$-graph of order $f$. Suppose that $H$ is a $k$-graph with a partition $\mathcal{P} =\{V_0,V_1,\dots,V_r\}$ of $V (H)$.
The \textit{index vector} $\mathbf{i}_{\mathcal{P}}(S)\in \mathbb{Z}^r$ of a subset $S\subseteq V(H)$ w.r.t. (with respect to)  $\mathcal{P}$ is the vector whose coordinates are the sizes of the intersections of $S$ with $V_1,\dots,V_r$, that is, $\mathbf{i}_{\mathcal{P}}(S)=(|S\cap V_1|, \dots, |S\cap V_r|)$ and $V_0$ is not a part of $\mathbf{i}_{\mathcal{P}}(S)$.
We call an index vector $\mathbf{i}_{\mathcal{P}}(S)$ an \emph{$s$-vector} if all its coordinates are non-negative and their sum is $s$.
Given $\lambda>0$, an $f$-vector $\mathbf{v} \in \mathbb{Z}^r$ is called a \emph{$\lambda$-robust $F$-vector} if at least $\lambda (v(H))^f$ copies $F'$ of $F$ in $H$ satisfy $\mathbf{i}_{\mathcal{P}}(V(F'))=\mathbf{v}$. Let $I^{\lambda}_{\mathcal{P},F}(H)$ be the set of all $\lambda$-robust $F$-vectors and $L^{\lambda}_{\mathcal{P},F}(H)$ be the lattice generated by the vectors of $I^{\lambda}_{\mathcal{P},F}(H)$. For $j \in [r]$, let $\mathbf{u}_j\in \mathbb{Z}^r$ be the $j^{th}$ unit vector, namely, $\mathbf{u}_j$ has $1$ on the $j^{th}$ coordinate and $0$ on other coordinates.
To ease notation,  throughout the paper, for a $k$-graph $H$ if we consider a partition $\mathcal{P}=\{V_0, V_1, V_2\}$ of $V(H)$ satisfying  $V_0=\emptyset$, then we write $X=V_1$ and $Y=V_2$, i.e.,  $\{X,Y\}:=\{ V_1, V_2\}$.

\begin{lemma}[Absorbing Lemma]\label{Absorbing-Lemma}
Suppose that $1/n\ll \mu, \gamma'\ll \lambda \ll \zeta \ll \gamma \ll  p,\alpha <1$ and $f,k, n \in \mathbb{N}$ with $f\ge k\ge 3$.
Let $F\in \cover_{k,>}^{\points, k-1}$  with $f:=v(F)$ and $H$ be an $(n, p,\mu,\points)$ $k$-graph with $\delta_{k-1}(H)\ge \alpha n$. 
If each partition $\mathcal{P}=\{V_0, V_1, V_2\}$ of $V(H)$ with $|V_i| \ge \zeta n$ for $i=1, 2$ satisfies $(1,-1) \in L^{\lambda}_{\mathcal{P},F}(H)$, 
then there exists a vertex set $W \subseteq V (H)$ with $|W| \leq \gamma n$ such that for any vertex set $U\subseteq V(H)\setminus W$ with $|U|\leq \gamma' n$ and $|U|\in f\mathbb{N}$, both $H[W]$ and $H[W \cup U]$ contain $F$-factors.
\end{lemma}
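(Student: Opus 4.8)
The plan is to follow the standard lattice-based absorption scheme of Han~\cite{Han2017Perfect}, adapted to the quasi-random setting. Fix the hierarchy $1/n\ll \mu,\gamma'\ll\lambda\ll\zeta\ll\gamma\ll p,\alpha$. The central object is an \emph{absorber} for a single $k$-set: given a $k$-set $T\subseteq V(H)$, an $(F,T)$-absorber is a vertex set $A$ of bounded size (say $|A|=(2f-k)$ or a small multiple of $f$) disjoint from $T$ such that both $H[A]$ and $H[A\cup T]$ have $F$-factors. The first step is to show that for every $k$-set $T$ there are many, that is at least $c n^{|A|}$, absorbers for $T$. Here is where the two hypotheses enter: the condition $F\in\cover_k^{\points,k-1}$ together with $\delta_{k-1}(H)\ge\alpha n$ and $(p,\mu,\points)$-density guarantees that every vertex — and, by a short supersaturation argument, every vertex in robustly many copies — lies in a copy of $F$, while the hypothesis that $(1,-1)\in L^{\lambda}_{\mathcal{P},F}(H)$ for every balanced bipartition supplies the ``transferral'' that lets us swap one vertex of $T$ for another across any two parts. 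Concretely, to build an absorber for $T=\{t_1,\dots,t_k\}$ one takes a copy of $F$ covering $t_1$ (using $\cover$), then uses a sequence of robust $F$-vectors realizing $(1,-1)$ to reroute, one vertex at a time, so that the remaining $t_2,\dots,t_k$ get absorbed; the number of choices at each stage is linear in $n$ raised to the appropriate power, giving the required count $\Omega(n^{|A|})$.

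The second step is the probabilistic selection. Having $\Omega(n^{|A|})$ absorbers per $k$-set, one picks a random family $\mathcal{F}$ of vertex sets where each candidate absorber is included independently with probability $q=\Theta(n^{1-|A|})$ (or selects $\Theta(n)$ of them uniformly). Standard first-moment and concentration (Chernoff/Janson) arguments then show that with positive probability: (i) $|\bigcup\mathcal{F}|\le \gamma n$; (ii) the absorbers in $\mathcal{F}$ are pairwise vertex-disjoint after deleting the few that overlap; and (iii) every $k$-set $T$ is absorbed by at least $\gamma' n/k$ members of $\mathcal{F}$. One then sets $W_0:=\bigcup\mathcal{F}$. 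Since the absorbers in $\mathcal{F}$ each individually span a set with an $F$-factor, $H[W_0]$ has an $F$-factor.

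The third step is to pad $W_0$ to the final set $W$ and verify the absorbing property. Given any $U\subseteq V(H)\setminus W$ with $|U|\le\gamma' n$ and $f\mid|U|$: we wish to partition $U$ into $k$-sets and absorb each using a distinct member of $\mathcal{F}$. But $|U|$ need only be divisible by $f$, not $k$; the clean fix (used in~\cite{Han2017Perfect,DHSWZ}) is to first greedily remove from $U$ a bounded number of vertex-disjoint copies of $F$ entirely inside $V(H)\setminus(W_0\cup U)$ that each swallow $f$ vertices of $U$ at a time — or, more simply, to arrange that $|W_0|\in f\mathbb{N}$ and that the leftover $|U|$ after pairing with absorbers is a multiple of $k$ handled by chopping $U$ into $k$-sets and noting $f\mid |U|$ forces a consistent count once we also throw in $\le f$ ``buffer'' copies of $F$. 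Either way, one ends up needing at most $|U|\le\gamma' n$ absorptions, and since each $k$-set is absorbed by $\ge\gamma' n/k$ members of $\mathcal{F}$ we can process them one at a time, always finding an unused absorber for the current $k$-set; the absorbers not used still span sets with $F$-factors. Combining these $F$-factors with the copies of $F$ used to absorb $U$ yields an $F$-factor of $H[W\cup U]$, and taking $U=\emptyset$ gives the $F$-factor of $H[W]$.

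The main obstacle is the first step: establishing that every $k$-set $T$ has $\Omega(n^{|A|})$ absorbers. The $\cover$ property only guarantees \emph{one} copy of $F$ through a given vertex, so one must bootstrap it to a \emph{robust} statement (many copies, and copies realizing the needed index vectors) using supersaturation from $(p,\mu,\points)$-density and $\delta_{k-1}\ge\alpha n$; and then the transferral hypothesis $(1,-1)\in L^{\lambda}_{\mathcal{P},F}(H)$ must be applied along a carefully chosen sequence of bipartitions to move the $k$ target vertices into position while keeping a linear number of free choices at every stage. Making the bookkeeping of these sequential reroutings precise — ensuring disjointness from $T$ and from the partial absorber built so far, and that each robust $F$-vector is genuinely available given the $\zeta n$ lower bounds on part sizes — is the technical heart of the argument; the probabilistic selection and the final absorption are by now routine.
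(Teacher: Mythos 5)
Your high-level plan (construct many absorbers per small set, select randomly, absorb greedily) is the classical R\"odl--Ruci\'nski--Szemer\'edi scheme, while the paper instead routes everything through Han's lattice-based/reachability machinery: it verifies ($\clubsuit$) every vertex is in $\Omega(n^{f-1})$ copies of $F$ (Lemma~\ref{cover-supersaturation}) and ($\spadesuit$) $V(H)$ minus a small set is $(F,\beta'_0,i'_0)$-closed (via Lemmas~\ref{c+1-reachable}, \ref{S-closed}, \ref{partation}, \ref{closed}), and then invokes the black-box absorbing Lemma~\ref{absorb}, which performs the probabilistic selection \emph{and} the final absorption for you. These are related but genuinely different decompositions of the argument, and your version has real gaps.

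The most concrete gap is the divisibility bookkeeping in your ``$(F,T)$-absorber for a $k$-set $T$'': with $|T|=k$ and $|A|=2f-k$, you need both $|A|$ and $|A\cup T|=2f$ divisible by $f$, which forces $f\mid k$, i.e.\ $k=f$, so this only works when $F$ is a single edge. For $F$-factors the unit to absorb is an $f$-set (or, better, a single vertex, which is what the reachability framework does), not a $k$-set, and your later phrase ``absorb each using a distinct member of $\mathcal{F}$'' inherits this confusion. The deeper gap is in your Step 1, which you yourself flag as ``the technical heart'': you assert that the $\cover$ hypothesis plus the transferral $(1,-1)\in L^{\lambda}_{\mathcal{P},F}(H)$ yields $\Omega(n^{|A|})$ absorbers for \emph{every} small set $T$, but you give no construction. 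The difficulty is that $T$ is an arbitrary set; a single transferral step only changes the index vector by $(1,-1)$, and you must chain a bounded number of such steps while maintaining $\Omega(n)$ free choices at every stage and disjointness from everything built so far. The paper does not try to do this directly. Instead it shows $V(H)\setminus V'_0$ is $(F,\beta'_0,i'_0)$-closed — i.e.\ every \emph{pair} of vertices is reachable via $\Omega(n^{i'_0f-1})$ reachable sets — by first producing a bounded partition $V_1,\dots,V_r$ of almost all of $V(H)$ into closed pieces (Lemma~\ref{partation}) and then merging them using precisely the transferral hypothesis $\mathbf{u}_j-\mathbf{u}_l\in L^{\lambda}_{\mathcal{P}'',F}(H)$ (Lemma~\ref{closed}); the abstract Lemma~\ref{absorb} then builds absorbers \emph{vertex-by-vertex} out of reachable sets, avoiding the need for ``absorbers for an arbitrary $f$-set $T$'' altogether. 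If you want to make your proposal rigorous, you would essentially have to re-derive this closedness machinery, at which point you would be reproducing the paper's proof; as written, Step~1 is not established and the $k$-set bookkeeping would not compile.
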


To prove Lemma~\ref{Absorbing-Lemma}, we use the notion of reachability introduced by Lo and Markstr\"{o}m~\cite{Lo2015}. Let $H$ be a $k$-graph on $n$ vertices.
Given a $k$-graph $F$ of order $f$, a constant $\beta> 0$ and an integer $i \ge 1$, we say that two vertices $u, v$ in $H$ are \textit{$(F, \beta, i)$-reachable} (in $H$) if there are at least $\beta n^{if-1}$ $(if-1)$-sets $W$ such that both $H[\{u\}\cup W]$ and $H[\{v\}\cup W]$ contain $F$-factors. Moreover, we call $W$ a \emph{reachable set} for $\{u,v\}$.
Given a vertex set $U\subseteq V(H)$, $U$ is said to be \emph{$(F, \beta, i)$-closed} if every two vertices in $U$ are $(F, \beta, i)$-reachable in $H$.
For $v\in V(H)$, let $\tilde{N}_{F,\beta, i}(v)$ be the set of vertices that are $(F,\beta, i)$-reachable to $v$.

Our proof of Lemma~\ref{Absorbing-Lemma} is based on the following lemma.

\begin{lemma}{\rm({\cite[Lemma 2.9]{DHSWZ}}).}~\label{absorb}
Suppose that $1/n \ll \gamma' \ll \gamma , \beta'_0 <1$ and $i'_0, k, f, n \in \mathbb{N}$. Let $F$ be a $k$-graph with $f:=v(F)$.
Suppose $H$ is a $k$-graph on $n$ vertices with the following two properties.
\begin{enumerate}[label=$(\roman*)$]
  \item[{\rm ($\clubsuit$)}] For any $v\in V(H)$, there are at least $\gamma n^{f-1}$ copies of $F$ containing it; \label{item:1}
  \item[{\rm ($\spadesuit$)}]  there exists $V'_0\subseteq V(H)$ with $|V'_0|\leq \gamma^2n$ such that $V (H)\setminus V'_0$ is $(F, \beta'_0, i'_0)$-closed in $H$.\label{item:2}
\end{enumerate}
Then there exists a vertex set $W$ with $V'_0\subseteq W\subseteq V(H)$ and $|W|\leq  \gamma n$ such that for any vertex set $U \subseteq V (H)\setminus W$ with $|U| \leq \gamma' n$ and $|U| \in f\mathbb{N}$, both $H[W]$ and $H[U \cup W]$ contain $F$-factors. 
\end{lemma}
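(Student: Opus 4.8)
\textbf{Proof plan for Lemma~\ref{Absorbing-Lemma}.}

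The strategy is to deduce the Absorbing Lemma from Lemma~\ref{absorb}; the bulk of the work is verifying its two hypotheses ($\clubsuit$) and ($\spadesuit$) for our host graph $H$. Set the constant hierarchy so that, in addition to the chain given in the statement, we also have $1/n \ll \gamma' \ll \gamma$ and a suitable closeness parameter $\beta_0'$ and integer $i_0'$ fitting between $\lambda$ and $\zeta$; concretely, one reads off $\beta_0', i_0'$ from the partition transferral analysis below and then shrinks $\gamma'$. Hypothesis ($\clubsuit$) — every vertex lies in $\gamma n^{f-1}$ copies of $F$ — is close to automatic: since $F \in \cover_k^{\points, k-1}$, for $\mu$ small and $n$ large every vertex of $H$ lies in at least one copy of $F$, and then a standard supersaturation argument (embed $F$ greedily using the $(p,\mu,\points)$-density at each step, which gives a positive fraction of the $\binom{n-1}{f-1}$ choices of the remaining vertices) upgrades this to $\gamma n^{f-1}$ copies through $v$. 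So $V_0' = \emptyset$ on that front.

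The heart of the proof is producing the exceptional set $V_0'$ with $|V_0'| \le \gamma^2 n$ such that $V(H) \setminus V_0'$ is $(F, \beta_0', i_0')$-closed. Following Lo--Markstr\"om and Han, one first shows that the reachability relation has \textbf{bounded-size neighborhoods from below}: for a suitable small $\beta_1$, the set $V_{\text{sm}}$ of vertices $v$ with $|\tilde N_{F,\beta_1,1}(v)| < \zeta n$ has size at most $\gamma^2 n / 2$ — otherwise two such vertices, or a vertex and a large antineighborhood, would contradict the density of $H$ together with the supersaturation count (here one uses that any two vertices with a common large "link" can be joined by a single copy of $F$ built through that link, because $F$, being $k$-partite or at least $\pi_{\points}$-controlled, embeds into the dense trace). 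Put these few bad vertices into $V_0'$. On $V' := V(H) \setminus V_0'$ the relation "$|\tilde N_{F,\beta_1,1}(u) \cap \tilde N_{F,\beta_1,1}(v)| \ge $ something" is dense enough that $V'$ partitions into at most a bounded number of "reachability-equivalence" classes $\mathcal{P} = \{V_1, \ldots\}$, each of size $\ge \zeta n$, such that any two vertices in the same class are $(F, \beta_0', i_0')$-reachable for some $\beta_0' \ll \beta_1$ and $i_0'$ depending only on that bounded number of classes (chaining $i_0'$ single-step reachable sets, absorbing the parities via copies of $F$).

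It remains to collapse this partition to a \emph{single} class, and this is where the hypothesis of the lemma enters and is, I expect, the main obstacle to state cleanly (the reachability bookkeeping above is routine but tedious). If the partition $\mathcal{P}$ had $r \ge 2$ parts, pick any part $V_1$ and let $V_2 = V' \setminus V_1$; after moving at most $\zeta n$ vertices around we may assume $|V_1|, |V_2| \ge \zeta n$, so the hypothesis applies and gives $(1,-1) \in L^{\lambda}_{\mathcal{P}', F}(H)$ for the coarsened bipartition $\mathcal{P}' = \{\emptyset, V_1, V_2\}$. Unwinding $L^{\lambda}_{\mathcal{P}',F}(H)$: there are $\lambda$-robust $F$-vectors whose integer combination equals $(1,-1)$, meaning one can find, for a vertex $x \in V_1$ and $y \in V_2$, a bounded collection of vertex-disjoint copies of $F$ covering $x$, $y$ and otherwise balanced across $(V_1, V_2)$, which is exactly an $(if-1)$-set witnessing that $x$ and $y$ are $(F, \beta_0', i)$-reachable for a bounded $i$; robustness ($\lambda n^f$ copies per vector) gives the required $\beta_0' n^{if-1}$ count. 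Hence $x$ and $y$ are reachable, contradicting that $V_1, V_2$ lie in distinct classes — so $r = 1$ and $V'$ is closed. Feeding $V_0'$ and the closeness data into Lemma~\ref{absorb} yields the absorbing set $W$ with $|W| \le \gamma n$ having the stated property, completing the proof. The one genuinely delicate point is matching the "robust $F$-vector" language of $L^\lambda_{\mathcal{P},F}$ with the "reachable set" language of Lemma~\ref{absorb}, i.e.\ turning an $L$-combination summing to $(1,-1)$ into an actual family of disjoint copies of $F$ of controlled total size; this requires a short greedy extraction argument (iteratively pull out copies realizing each robust vector, discarding a vanishing fraction of vertices at each of the boundedly many steps).
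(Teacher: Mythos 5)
Your proposal does not address the statement it was meant to prove; it proves a different lemma and invokes the target statement as a black box. The statement in question is Lemma~\ref{absorb}: for an \emph{arbitrary} $k$-graph $H$ --- no quasi-randomness, no degree condition --- properties $(\clubsuit)$ and $(\spadesuit)$ already imply the existence of the absorbing set $W$. What you have written is instead a plan for Lemma~\ref{Absorbing-Lemma}: you verify $(\clubsuit)$ via supersaturation and $(\spadesuit)$ via reachability partitions collapsed using the transferral hypothesis $(1,-1)\in L^{\lambda}_{\mathcal{P},F}(H)$, which is essentially the route the paper takes for that lemma (via Lemmas~\ref{cover-supersaturation}, \ref{c+1-reachable}, \ref{S-closed}, \ref{partation} and \ref{closed}), and then you conclude with ``Feeding $V_0'$ and the closeness data into Lemma~\ref{absorb} yields the absorbing set $W$.'' Relative to the assigned statement this is circular: the implication from $(\clubsuit)$ and $(\spadesuit)$ to the existence of $W$ is precisely what had to be established, and none of your steps touch it. (The paper itself does not reproduce this proof either; it imports it from Han--Zang--Zhao, remarking only that the argument is insensitive to the choice of $F$.)

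The missing content is the absorber construction and selection. For an $f$-set $S=\{v_1,\dots,v_f\}\subseteq V(H)\setminus V_0'$, one defines an absorber to be an $(i_0'f^2)$-set $A=\{u_1,\dots,u_f\}\cup W_1\cup\dots\cup W_f$ where $\{u_1,\dots,u_f\}$ spans a copy of $F$ disjoint from $S$ and each $W_j$ is a reachable $(i_0'f-1)$-set for $\{u_j,v_j\}$, all parts pairwise disjoint; then both $H[A]$ (via the $F$-factors of the $W_j\cup\{u_j\}$) and $H[A\cup S]$ (via the copy of $F$ on the $u_j$'s plus the $F$-factors of the $W_j\cup\{v_j\}$) contain $F$-factors. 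Property $(\clubsuit)$ supplies $\Omega(n^f)$ choices of the initial copy of $F$ and $(\spadesuit)$ supplies $\beta_0'n^{i_0'f-1}$ choices of each $W_j$, so every $f$-set in $V(H)\setminus V_0'$ has $\Omega(n^{i_0'f^2})$ absorbers. A random sparse selection followed by deletion of intersecting or atypical members produces a pairwise disjoint family of total size at most $\gamma n/2$ in which every $f$-set retains more than $\gamma' n/f$ absorbers; one then sets $W$ to be the union of this family with $V_0'$ and with a greedily chosen collection of disjoint copies of $F$ covering $V_0'$ (possible because $|V_0'|\le\gamma^2 n$ and each of its vertices lies in $\gamma n^{f-1}$ copies of $F$). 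Any $U$ with $|U|\le\gamma' n$ and $f\mid |U|$ is then absorbed $f$-set by $f$-set using distinct absorbers. None of this --- the definition and counting of absorbers, the probabilistic selection, or the treatment of $V_0'$ --- appears in your proposal.
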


It is worth mentioning that Han--Zang--Zhao~\cite[Lemma 3.6]{Han2017Minimum} proved Lemma~\ref{absorb} for $F$ being a complete 3-partite 3-graph.
Note that to prove Lemma~\ref{Absorbing-Lemma}, it suffices to show that ($\clubsuit$) and ($\spadesuit$) hold for $k$-graphs $H$ satisfying the conditions of Lemma~\ref{Absorbing-Lemma}. We first consider the property ($\clubsuit$). 
Recall that the family $\cover_{k,>}^{\points,s}$ consists of $k$-graphs $F$ satisfying that for all $0< p, \alpha<1$, there is some $n_0\in \mathbb{N}$ and $\gamma, \mu > 0$ so that if $H$ is an $(n, p, \mu, \points)$ $k$-graph $H$ with $\delta_{s}(H)\geq \alpha n^{k-s}$ and $n \geq n_0$, then each vertex $v$ in $V(H)$ is contained in at least $\gamma n^{v(F)-1}$ copies of $F$. 
Therefore, the property ($\clubsuit$) follows from the condition $F\in \cover_{k,>}^{\points,k-1}$ in Lemma~\ref{Absorbing-Lemma}. 

Next we consider the property ($\spadesuit$). Following the approach in~\cite{Han2017Minimum}, given a $k$-graph $H$, we first find a partition of $V(H)$ such that all parts are $(F, \beta, i)$-closed in $H$ and then study the reachability between different parts. The following lemma provides such a partition.

\begin{lemma}{\rm (\cite[Theorem 6.3]{Han_2020})}.
\label{partation}
Suppose that $1/n\ll \beta_0\ll \beta \ll \delta, 1/c, 1/f <1$ and $c, f,k, n \in \mathbb{N}$ with $f\ge k\ge 3$.
 Let $F$ be a $k$-graph with $f:=v(F)$. Assume $H$ is a $k$-graph on $n$ vertices and $S\subseteq V(H)$ is such that $|\tilde{N}_{F, \beta , 1}(v)\cap S|\geq \delta n$ for any $v\in S$. Further, suppose every set of $c+1$ vertices in $S$ contains two vertices that are $(F, \beta , 1)$-reachable in $H$. Then we can find a partition $\mathcal{P}$ of $S$ into $V_1,\dots,V_r$ with $r \leq \min\{c,\lfloor1/{\delta}\rfloor\}$ such that for any $i \in [r]$, $|V_i|\geq (\delta -\beta )n$ and $V_i$ is $(F, \beta_0 , 2^{c-1})$-closed in $H$. 
\end{lemma}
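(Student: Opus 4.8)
The plan is to recast the hypotheses in terms of the auxiliary \emph{reachability graph} $G$ on the vertex set $S$, whose edges are the $(F,\beta,1)$-reachable pairs. The assumption $|\tilde N_{F,\beta,1}(v)\cap S|\ge\delta n$ says $\delta(G)\ge\delta n$, and the assumption that every $(c+1)$-set of $S$ contains a reachable pair says $\alpha(G)\le c$. The single analytic ingredient I would need is the standard ``transitivity with index doubling'' for reachability: if $A,B\subseteq V(H)$ are $(F,\beta',i)$-closed and $|A\cap B|\ge\sigma n$, then $A\cup B$ is $(F,\beta'',2i)$-closed with $\beta''\ge\sigma(\beta')^2/C(f)$. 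This follows from a bridge double-count: for any $a\in A$, $b\in B$, glue a reachable $(if-1)$-set for the pair $\{a,w\}$ to one for the pair $\{w,b\}$ through a bridge vertex $w\in A\cap B$; the $\ge\sigma n$ available bridges supply exactly the extra factor $n$ that keeps the count of the resulting $(2if-1)$-sets of the right linear order. Because each use of this step squares the parameter, I would fix beforehand a chain $\beta_0\ll\beta^{(c)}\ll\cdots\ll\beta^{(1)}\ll\beta^{(0)}=\beta$ of about $c$ constants, so that up to $c-1$ doublings still end above $\beta_0$.

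Next I would produce the parts greedily. Pick representatives $v_1,v_2,\dots\in S$ in turn, each $v_{j+1}$ taken from outside $\bigcup_{a\le j}\tilde N_{F,\beta,1}(v_a)$ and (to keep parts large) chosen so that at least a $(\delta-\beta)n$-portion of $\tilde N_{F,\beta,1}(v_{j+1})$ has not yet been claimed; stop when no such vertex remains, so the closed neighbourhoods $\tilde N_{F,\beta,1}(v_a)$ cover $S$. Any $v_{j+1}$ produced this way is $(F,\beta,1)$-non-reachable to all earlier $v_a$, so the representatives form an independent set of $G$, whence $r\le\alpha(G)\le c$. Assigning each $u\in S$ to the part $V_a$ of the first $v_a$ it reaches yields a partition of $S$ with $|V_a|\ge(\delta-\beta)n$ by the selection rule; disjointness then forces $r(\delta-\beta)n\le n$, i.e.\ $r\le\lfloor1/\delta\rfloor$, and an inclusion--exclusion count against $\delta(G)\ge\delta n$ recovers the same bound independently.

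It remains to show each $V_a$ is $(F,\beta_0,2^{c-1})$-closed. For this I would run a second greedy process inside $V_a$: view $V_a$ as built from pieces of the form $\tilde N_{F,\beta,1}(u)\cap V_a$ (each large, by the selection rule), merge any two pieces meeting in $\ge\sigma n$ vertices via the transitivity lemma, doubling the index each time, and iterate until one piece remains. That merges keep occurring until only one piece is left follows from the inclusion--exclusion bound (too many pairwise almost-disjoint large pieces would overflow $n$); that at most $c-1$ merges happen follows because the pieces carried into successive merges have pairwise $(F,\beta,1)$-non-reachable representatives, of which there are at most $\alpha(G)\le c$. Hence the final index is at most $2^{c-1}$ and the final parameter at least $\beta_0$. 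The main obstacle, I expect, is precisely this last bookkeeping --- arranging the merging so its depth is governed by $c$ rather than by $\log_2 n$, while still guaranteeing that the pieces to be merged always share $\ge\sigma n$ vertices --- together with making the greedy choice of representatives compatible with the $(\delta-\beta)n$ size requirement; the transitivity lemma and the counting estimates for $r$ are routine once the constant hierarchy is chosen.
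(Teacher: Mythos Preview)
The paper does not prove this lemma; it is quoted verbatim from \cite[Theorem 6.3]{Han_2020} and used as a black box. So there is no ``paper's own proof'' to compare against here. That said, your sketch has a genuine gap in Phase~2 that would prevent it from going through as written.

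Your transitivity lemma requires the two input sets $A,B$ to already be $(F,\beta',i)$-closed before you merge them. But the initial pieces you propose inside $V_a$, namely $\tilde N_{F,\beta,1}(u)\cap V_a$ for $u\in V_a$, are \emph{not} closed: membership in $\tilde N_{F,\beta,1}(u)$ only says each element is $(F,\beta,1)$-reachable to the single vertex $u$, not to one another, and bridging through the single vertex $u$ loses a factor of $n$ (you get $\beta^2 n^{2f-2}$ reachable sets, not $\beta' n^{2f-1}$). So the very first merge step has no closed seeds to feed into your transitivity lemma. The claim ``each large, by the selection rule'' is also unjustified for $u\neq v_a$: the selection rule controls $|V_a|$, not $|\tilde N_{F,\beta,1}(u)\cap V_a|$ for arbitrary $u\in V_a$. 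Finally, the depth bound ``at most $c-1$ merges because the representatives are pairwise non-reachable'' is not substantiated: once you merge two pieces $\tilde N(u_1)\cap V_a$ and $\tilde N(u_2)\cap V_a$ that \emph{do} overlap in $\ge\sigma n$ vertices, there is no reason the natural representatives $u_1,u_2$ are non-adjacent in $G$, so you cannot tie the merge count to $\alpha(G)$ in this way.

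The standard route (as in \cite{Han_2020}) runs the merging at the level of the whole of $S$ rather than inside each $V_a$, and the seeds that are genuinely closed come from a different source: one grows closed sets by repeatedly adjoining all vertices whose $G$-neighbourhood meets the current closed set in $\ge\sigma n$ vertices (this step is what supplies the $\Omega(n)$ bridges), and the bound on the number of rounds comes from the fact that any vertex left outside after a round is $G$-nonadjacent to the representatives chosen so far, so at most $c$ rounds occur before $S$ is exhausted. Your Phase~1 is essentially correct for producing the $r\le\min\{c,\lfloor 1/\delta\rfloor\}$ count; it is Phase~2 that needs to be rebuilt around seeds that are actually closed.
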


In order to use Lemma~\ref{partation} later, we need the following two lemmas.

\begin{lemma}{\rm (\cite[Lemma 2.6]{DHSWZ})}. \label{S-closed}
Given integers $c, k, f\geq 2$ and a constant $0 < \beta  <1$, there exist $1/n \ll \delta \ll 1/c$ such that the following holds.
Let $F$ be a $k$-graph on $f$ vertices. Assume that $H$ is a $k$-graph on $n$ vertices satisfying that every set of $c+1$ vertices contains two vertices that are $(F,\beta ,1)$-reachable in $H$. Then there exists $S\subseteq V(H)$ with $|S|\geq (1-c\delta )n$ such that $|\tilde{N}_{F, \beta , 1}(v)\cap S|\geq \delta n$ for any $v\in S$.
\end{lemma}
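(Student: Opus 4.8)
The plan is to prove Lemma~\ref{S-closed} by a routine iterative cleaning argument, in which the only input from the hypothesis is that the reachability relation, regarded as an auxiliary graph, has bounded independence number.

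To set things up, given $c,k,f$ and $\beta$ I would first fix $\delta$ with $\delta\ll 1/c$ and then take $n$ large. Define an auxiliary graph $R$ on vertex set $V(H)$ by joining $u$ to $v$ whenever $u$ and $v$ are $(F,\beta,1)$-reachable in $H$; this is well defined because reachability is a symmetric relation, and the hypothesis that every $(c+1)$-set of vertices contains two $(F,\beta,1)$-reachable vertices is precisely the statement that $R$ contains no independent set of size $c+1$, i.e.\ its independence number is at most $c$. Now iterate: start with $S:=V(H)$, and while there is some $v\in S$ with $|\tilde{N}_{F,\beta,1}(v)\cap S|<\delta n$, delete one such $v$ from $S$. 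When this process halts, every vertex $v$ in the final set $S$ satisfies $|\tilde{N}_{F,\beta,1}(v)\cap S|\ge\delta n$, which is the second conclusion; so it remains only to bound $T:=V(H)\setminus S$.

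For this bound, list the vertices of $T$ as $v_1,\dots,v_m$ in their order of deletion. At the moment $v_j$ is deleted the current set still contains $v_{j+1},\dots,v_m$, so $v_j$ has fewer than $\delta n$ $R$-neighbors among $\{v_{j+1},\dots,v_m\}$; hence this listing exhibits $R[T]$ as a graph of degeneracy less than $\delta n$, and therefore $R[T]$ contains an independent set of size roughly $|T|/(\delta n)$ (greedily take the least-indexed remaining vertex, delete it together with its remaining $R$-neighbors, and repeat; symmetry of $R$ shows the chosen vertices are pairwise non-reachable). If $|T|$ exceeded $c\delta n$, this would produce an independent set of $R$ of size $c+1$ — here one uses $\delta\ll 1/c$ to absorb the small slack in the counting — contradicting that $R$ has independence number at most $c$. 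Hence $|T|\le c\delta n$, so $|S|\ge(1-c\delta)n$, as required.

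I expect no genuine obstacle here: the whole argument is elementary and uses nothing about hypergraphs except that $(F,\beta,1)$-reachability is a symmetric binary relation, the only mild point requiring care being the constant bookkeeping in the last step (matching the degeneracy/independence estimate for $|T|$ against $c\delta n$), which is exactly why $\delta$ is taken much smaller than $1/c$. For context, this lemma is the first half of the verification of property ($\spadesuit$) in the Absorbing Lemma: once $S$ is in hand one feeds it into Lemma~\ref{partation} to obtain a partition of $S$ into a bounded number of $(F,\beta_0,2^{c-1})$-closed parts.
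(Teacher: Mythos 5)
Your proof is correct and is the standard iterative-cleaning argument one expects behind \cite[Lemma~2.6]{DHSWZ}: repeatedly delete a vertex whose reachability degree into the current set is below $\delta n$, then note that the deletion order witnesses small degeneracy of the reachability graph $R$ on the removed set $T$, so that a too-large $T$ would contain an independent set of size $c+1$ in $R$, contradicting the hypothesis. One small correction to the final accounting: each round of the greedy independent-set extraction removes the chosen vertex together with at most $\lceil\delta n\rceil-1$ later $R$-neighbours, so what one directly obtains is $|T|\le c\lceil\delta n\rceil\le c\delta n+c$ rather than $c\delta n$ on the nose; the extra additive $c$ is negligible because $1/n\ll\delta$ (not, as you suggest, because $\delta\ll 1/c$), and the stated bound $|S|\ge(1-c\delta)n$ is recovered after a harmless relabelling of $\delta$.
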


\begin{lemma}\label{c+1-reachable}
Given integers $k, f\geq 2$ and a constant $0 <\gamma <1$, there exist $1/n \ll \beta  \ll \gamma$ such that the following holds. Let $F$ be a $k$-graph on $f$ vertices.
 If $H$ is a $k$-graph on $n$ vertices satisfying that every vertex $v$ in $V(H)$ is contained in at least $\gamma n^{f-1}$ copies of $F$, then every set of $\lfloor 1/\gamma \rfloor+1$ vertices in $V(H)$ contains two vertices that are $(F,\beta ,1)$-reachable in $H$.
\end{lemma}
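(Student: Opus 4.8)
The plan is a short double-counting argument on rooted copies of $F$. First I would fix the hierarchy $1/n\ll\beta\ll\gamma$ (choosing $\beta$ small in terms of $\gamma$ and $f$, and only then $n$ large in terms of $\beta$), and introduce, for each $v\in V(H)$, the family $\mathcal{W}(v)$ of all $(f-1)$-subsets $W\subseteq V(H)\setminus\{v\}$ for which $H[\{v\}\cup W]$ contains a spanning copy of $F$; since $|\{v\}\cup W|=f=v(F)$, this is the same as $H[\{v\}\cup W]$ having an $F$-factor. By hypothesis (counting, for each $v$, the $f$-sets through $v$ that host a copy of $F$) we get $|\mathcal{W}(v)|\ge\gamma n^{f-1}$ for every $v$; passing instead to a count of labelled copies of $F$ would cost only a bounded factor, which is immaterial below. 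The role of $\mathcal{W}(\cdot)$ is a clean restatement of reachability: for distinct $u,v$, an $(f-1)$-set $W$ is a reachable set for $\{u,v\}$ witnessing $(F,\beta,1)$-reachability precisely when $W\in\mathcal{W}(u)\cap\mathcal{W}(v)$ --- and such a $W$ automatically avoids both $u$ and $v$ by the definition of $\mathcal{W}(\cdot)$ --- so $u$ and $v$ are $(F,\beta,1)$-reachable if and only if $|\mathcal{W}(u)\cap\mathcal{W}(v)|\ge\beta n^{f-1}$.

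Next I would argue by contradiction. Put $m:=\lfloor 1/\gamma\rfloor+1$; since $m>1/\gamma$, the quantity $\delta_0:=m\gamma-1$ is a fixed positive constant, and I would have chosen $\beta$ small enough that $\binom{m}{2}\beta<\delta_0$. Suppose some $m$-set $T=\{v_1,\dots,v_m\}\subseteq V(H)$ contains no two $(F,\beta,1)$-reachable vertices; then $|\mathcal{W}(v_i)\cap\mathcal{W}(v_j)|<\beta n^{f-1}$ for all $i<j$, so by the Bonferroni inequality
\[
\binom{n}{f-1}\ \ge\ \Bigl|\bigcup_{i=1}^{m}\mathcal{W}(v_i)\Bigr|\ \ge\ \sum_{i=1}^{m}|\mathcal{W}(v_i)|-\sum_{i<j}|\mathcal{W}(v_i)\cap\mathcal{W}(v_j)|\ >\ \Bigl(m\gamma-\binom{m}{2}\beta\Bigr)n^{f-1}.
\]
Dividing by $n^{f-1}$ and using $\binom{n}{f-1}\le n^{f-1}$ gives $m\gamma-\binom{m}{2}\beta<1$, that is, $\delta_0=m\gamma-1<\binom{m}{2}\beta$, contradicting the choice of $\beta$. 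Hence every set of $\lfloor 1/\gamma\rfloor+1$ vertices contains two that are $(F,\beta,1)$-reachable, as claimed.

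The argument is elementary and I do not foresee a real obstacle; the only thing needing care is the order of quantifiers in the constant hierarchy. One must choose $\beta$ after $\gamma$ (and $f$), small enough to be dominated by the fixed gap $\delta_0=(\lfloor 1/\gamma\rfloor+1)\gamma-1>0$ --- and this gap can be arbitrarily small (when $1/\gamma$ sits just above an integer), which is exactly why the relation $\beta\ll\gamma$ is needed --- and only afterwards take $n$ large in terms of $\beta$. Apart from that one merely keeps track of the harmless slack between $\binom{n}{f-1}$ and $n^{f-1}$, which in fact leaves room to spare once $f\ge 3$, since then $\binom{n}{f-1}\le n^{f-1}/2$.
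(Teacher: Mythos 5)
Your proof is correct and takes essentially the same route as the paper's: both are short double-counting arguments that compare $\sum_i|\mathcal{W}(v_i)|\ge m\gamma n^{f-1}$ against the total number of $(f-1)$-sets and conclude that some pair of the $\mathcal{W}(v_i)$'s must overlap in $\ge\beta n^{f-1}$ sets, choosing $\beta$ small enough relative to the gap $(\lfloor 1/\gamma\rfloor+1)\gamma-1>0$. The only difference is presentational — you phrase the pigeon-hole via the Bonferroni inequality and argue by contradiction, whereas the paper states the pigeon-hole implication directly — and your bookkeeping of the constant hierarchy and of the copies-versus-$(f-1)$-sets conversion factor is, if anything, slightly more explicit than the paper's.
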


\begin{proof}
Set $c:=\lfloor 1/\gamma \rfloor$ and choose $\beta $ such that $(c+1)\gamma>1+(c+1)^2\beta $. For a vertex $v\in V(H)$, let $C_F(v)$ be the family of copies of $F$ containing $v$.
Given any $c+1$ vertices $v_1, \dots , v_{c+1}$, since each of them is contained in at least 
$\gamma n^{f-1}$
copies of $F$, we have $\sum_{i=1} ^{c+1}|C_F(v_i)|\geq (c+1)\gamma n^{f-1}>(1+(c+1)^2\beta )n^{f-1}$. This implies that there exist two vertices $u,w$ such that there are at least $\beta n^{f-1}$ $(f-1)$-sets $W$ such that both $H[\{u\}\cup W]$ and $H[\{w\}\cup W]$ are copies of $F$ in $H$. Namely, they are $(F,\beta,1)$-reachable in $H$.
\end{proof}

By Lemma~~\ref{partation} and the conditions of $H$ in Lemma~\ref{Absorbing-Lemma}, the property ($\spadesuit$) can be obtained by the following lemma directly.

\begin{lemma}{\rm{(\cite[\rm Lemma 3.9]{Han2017Minimum})}.}
\label{closed}
Let $i_0, k, r,f> 0$ be integers and let $F$ be a $k$-graph with $f:=v(F)$. Given constants $\zeta, \beta_0 , \lambda > 0$, there exists $\beta_0'>0$ and integers $i'_0, n_0$ such that the following holds for all $n\geq n_0$. Let $H$ be a $k$-graph on $n$ vertices with a partition $\mathcal{P} =\{V_0,V_1,\dots,V_r\}$ of $V (H)$ such that for each $j\in [r]$, $|V_j|\geq \zeta n$ and $V_j$ is $(F,\beta_0 , i_0)$-closed in $H$. If $\mathbf{u}_j -\mathbf{u}_l \in  L^{\lambda}_{\mathcal{P},F}(H)$ for all $1 \leq  j <l \leq r$, then $V(H)\setminus V_0$ is $(F, \beta'_0, i'_0)$-closed in $H$. 
\end{lemma}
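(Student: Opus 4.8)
The plan is to prove that, under the stated hypotheses, any two vertices $u\in V_j$ and $v\in V_l$ (with $j,l\in[r]$) are $(F,\beta_0',i_0')$-reachable in $H$ for suitable constants $\beta_0'>0$ and $i_0'\in\mathbb N$ that depend only on $f,r,\lambda,\beta_0,i_0$. This immediately gives that $V(H)\setminus V_0=V_1\cup\cdots\cup V_r$ is $(F,\beta_0',i_0')$-closed: for $u,v$ in the same part $V_j$, which is already $(F,\beta_0,i_0)$-closed, one just pads each reachable $(i_0f-1)$-set with $i_0'-i_0$ further vertex-disjoint copies of $F$ to turn it into a reachable $(i_0'f-1)$-set. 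We may assume $r\ge 2$ and $j\ne l$, the other cases being trivial. Before the construction I record the key preliminary fact: every $\lambda$-robust $F$-vector is an $f$-vector in $\mathbb Z_{\ge0}^{\,r}$, and there are at most $\binom{f+r-1}{r-1}$ of those, so $L^{\lambda}_{\mathcal P,F}(H)$ is one of at most $2^{\binom{f+r-1}{r-1}}$ lattices; consequently there is a constant $C=C(f,r)$, \emph{independent of $H$}, such that $\mathbf u_j-\mathbf u_l\in L^{\lambda}_{\mathcal P,F}(H)$ can be witnessed by $\mathbf u_j-\mathbf u_l=\sum_t a_t\mathbf v_t$ with $\mathbf v_t\in I^{\lambda}_{\mathcal P,F}(H)$ and $\sum_t|a_t|\le C$. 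Comparing coordinate sums gives $\sum_t a_t=0$, and we set $M:=\sum_{a_t>0}a_t=\sum_{a_t<0}|a_t|\le C$.

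First I carry out the core construction. Greedily choose mutually vertex-disjoint copies of $F$ in $H$ avoiding $\{u,v\}$: for $t$ with $a_t>0$ take $a_t$ copies with index vector $\mathbf v_t$, for $t$ with $a_t<0$ take $|a_t|$ such copies; this is possible because each $\mathbf v_t$ is $\lambda$-robust (so at least $\lambda n^f$ copies) while only a bounded number of vertices are ever forbidden. Let $A$ be the union of the vertex sets of the copies with $a_t>0$ and $B$ that of the copies with $a_t<0$, so $|A|=|B|=Mf$ and $\mathbf i_{\mathcal P}(A)-\mathbf i_{\mathcal P}(B)=\sum_t a_t\mathbf v_t=\mathbf u_j-\mathbf u_l$. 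Hence $\mathbf i_{\mathcal P}(\{v\}\cup A)=\mathbf u_l+\mathbf i_{\mathcal P}(A)=\mathbf u_j+\mathbf i_{\mathcal P}(B)=\mathbf i_{\mathcal P}(\{u\}\cup B)$. Since an $f$-vector of coordinate sum $f$ forces every such copy of $F$ to lie entirely inside $V_1\cup\cdots\cup V_r$, the sets $A,B$ (hence $\{u\}\cup B$ and $\{v\}\cup A$) avoid $V_0$; and since $\{u\}\cup B$ and $\{v\}\cup A$ have equal size $Mf+1$ and equal index vector, they contain the same number of vertices in each $V_s$, $s\in[r]$. Therefore there is a bijection $\phi\colon\{u\}\cup B\to\{v\}\cup A$ such that $x$ and $\phi(x)$ lie in a common part $V_{s(x)}$ with $s(x)\in[r]$.

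Next I use closedness of the parts. For each $x\in\{u\}\cup B$, since $V_{s(x)}$ is $(F,\beta_0,i_0)$-closed and $x,\phi(x)\in V_{s(x)}$, there are at least $\beta_0 n^{i_0f-1}$ reachable sets of size $i_0f-1$ for the pair $\{x,\phi(x)\}$; greedily choose one, $W_x$, for each $x$, with all the $W_x$ mutually disjoint and disjoint from $A\cup B\cup\{u,v\}$. Set $W_0:=A\cup B\cup\bigcup_{x}W_x$. Then $H[\{u\}\cup W_0]$ has an $F$-factor: $H[A]$ is a union of copies of $F$, and $\{u\}\cup B\cup\bigcup_xW_x=\bigcup_{x\in\{u\}\cup B}(\{x\}\cup W_x)$ is a disjoint union of sets each spanning an $F$-factor. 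Symmetrically $\{v\}\cup A\cup\bigcup_xW_x=\bigcup_{x\in\{u\}\cup B}(\{\phi(x)\}\cup W_x)$ together with $H[B]$ shows $H[\{v\}\cup W_0]$ has an $F$-factor. Finally, $|W_0|+1=(2M+(Mf+1)i_0)f$ is divisible by $f$, so adjoining a bounded number of further vertex-disjoint copies of $F$ (these exist since $I^{\lambda}_{\mathcal P,F}(H)\ne\emptyset$ yields at least $\lambda n^f$ copies of $F$ in $H$) produces a set $W$ with $|W|=i_0'f-1$ for a fixed integer $i_0'$ depending only on $f,r,C,i_0$; adjoining copies of $F$ preserves the property that $H[\{u\}\cup W]$ and $H[\{v\}\cup W]$ both have $F$-factors, so $W$ is a reachable set for $\{u,v\}$ of the prescribed size.

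To finish, I count the sets $W$ thus obtained: at each of the boundedly many greedy steps the number of admissible choices is, for $n$ large, at least half the relevant robustness bound, so multiplying these and dividing by the bounded number of ways a fixed $W$ could be partitioned back into the pieces $A$, $B$, $(W_x)$ and padding copies, at least $c\,n^{i_0'f-1}$ sets $W$ arise, where $c>0$ depends only on $f,r,C,i_0,\lambda,\beta_0$. Taking $\beta_0'$ to be the minimum of this $c$ over the finitely many pairs $(j,l)$ and the in-part case shows $u,v$ are $(F,\beta_0',i_0')$-reachable, which proves the lemma. The work here is almost entirely bookkeeping; the two points that need care are (i) obtaining the bound $C$ on the lattice coefficients \emph{without reference to $H$}, so that $i_0'$ and $\beta_0'$ can legitimately be fixed before $H$ is introduced, and (ii) the size-matching: all reachable sets produced must have exactly the same size $i_0'f-1$, which is precisely what the padding step arranges. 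The one genuinely structural observation is that $\{u\}\cup B$ and $\{v\}\cup A$ have identical index vectors and lie outside $V_0$, so that the closedness of $V_1,\dots,V_r$ can be invoked vertex by vertex along the part-preserving bijection $\phi$.
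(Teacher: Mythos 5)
The paper does not actually prove this lemma---it is imported verbatim from Han--Zang--Zhao \cite{Han2017Minimum} (Lemma 3.9)---and your argument is essentially the standard proof from that source: bound the lattice coefficients by a constant $C(f,r)$ independent of $H$, realize $\mathbf u_j-\mathbf u_l$ by vertex-disjoint robust copies of $F$ split into $A$ and $B$, exploit that $\{u\}\cup B$ and $\{v\}\cup A$ have equal index vectors and avoid $V_0$ to build a part-preserving bijection along which the closedness of each $V_s$ is applied, and pad everything to a common size $i_0'f-1$. The proof is correct; the only blemish is the harmless arithmetic slip $|W_0|+1=(M+(Mf+1)i_0)f$ rather than $(2M+(Mf+1)i_0)f$, which does not affect the divisibility by $f$ that the padding step needs.
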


\begin{proof}[Proof of Lemma~\ref{Absorbing-Lemma}]
Let  $f\ge k\ge 3$ be integers. We choose constants satisfying the following hierarchy:
\[
1/n \ll \mu, \gamma'  \ll \beta'_0 \ll \lambda, \beta_0 \ll \zeta, \beta \ll \delta\ll   \gamma \ll  p,\alpha, 1/f <1,
\]
$\delta< \gamma^3$ and $\delta-\beta>\zeta $.
Let $F\in \cover_{k,>}^{\points, k-1}$ with $f:=v(F)$ and $H$ be an $(n, p,\mu,\points)$ $k$-graph with $\delta_{k-1}(H)\ge \alpha n$ satisfying conditions in Lemma~\ref{Absorbing-Lemma}. By Lemma~\ref{absorb}, it suffices to verify that $H$ satisfies properties ($\clubsuit$) and ($\spadesuit$). As mentioned above, $H$ satisfies property ($\clubsuit$) since $F\in \cover_{k,>}^{\points, k-1}$. 
Since every vertex $v$ in $V(H)$ is contained in at least $\gamma n^{f-1}$ copies of $F$, every set of $\lfloor 1/\gamma \rfloor+1$ vertices in $V(H)$ contains two vertices that are $(F,\beta ,1)$-reachable in $H$ by Lemma~\ref{c+1-reachable}.
Set $c:=\lfloor 1/\gamma \rfloor$. Next by Lemma~\ref{S-closed}, there exists $S\subseteq V(H)$ such that $|S|\geq (1-c\delta )n\ge  (1-\gamma^2)n$ and $|\tilde{N}_{F, \beta , 1}(v)\cap S|\geq \delta n$ for any $v\in S$.
Following Lemma~\ref{partation}, we get a partition $\mathcal{P}'$ of $S$ into $V_1,\dots,V_r$ with $r \leq \min\{c,\lfloor1/{\delta}\rfloor\}$ such that for any $i \in [r]$, $|V_i|\geq (\delta -\beta )n>\zeta n $ and $V_i$ is $(F, \beta_0 , 2^{c-1})$-closed in $H$.
Let $\mathcal{P}''=\{V'_0, V_1,\dots,V_r \}$ with $V'_0=V(H)\setminus S$. Clearly, $|V'_0|\le \gamma^2 n$.
Recalling the conditions in Lemma~\ref{Absorbing-Lemma}, for all $1 \leq  j <l \leq r$, $\mathcal{P}=\{V_0, V_j, V_l\}$ with $V_0=\bigcup_{i\in [r]\setminus \{j,l\}}V_i$ satisfies $(1,-1)\in L^{\lambda}_{\mathcal P,F}(H)$. We have $\mathbf{u}_j -\mathbf{u}_l\in L^{\lambda}_{\mathcal P'',F}(H)$. Finally, we apply Lemma~\ref{closed} with $i_0=2^{c-1}$ and partition $\mathcal{P}''$. Therefore, the property ($\spadesuit$) holds for $H$ by  Lemma~\ref{closed}.
\end{proof}

At last, by combining Lemma~\ref{Almost-factor} and Lemma~\ref{Absorbing-Lemma} we obtain the following corollary.

\begin{cor}\label{covertrans}
Suppose that $1/n\ll \mu\ll \lambda' \ll \zeta\ll   p,\alpha, 1/f <1$ and $f,k, n \in \mathbb{N}$ with $f\ge k\ge 3$ and $n\in f\mathbb{N}$.
Let $F\in \cover_{k,>}^{\points, k-1}$ 
with $f:=v(F)$  
and $\pi_{\points}(F)=0$.
Suppose $H$ is an $(n, p,\mu,\points)$ $k$-graph with $\delta_{k-1}(H)\ge \alpha n$ and the following property.
\begin{enumerate}[label=$(\roman*)$]
  \item[{\rm ($\heartsuit$)}]  For each induced subhypergraph $H'$ on $V'\subseteq V(H)$ with $|V'|\ge 2\zeta n$, each partition $\mathcal{P'}=\{X, Y\}$ of $V'$ with $|X|, |Y|\ge \zeta n$ satisfies $(1,-1) \in L^{\lambda'}_{\mathcal{P'},F}(H')$.\label{item:3}
\end{enumerate}
Then $H$ has an $F$-factor. 
\end{cor}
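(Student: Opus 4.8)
The plan is to deduce Corollary~\ref{covertrans} by combining the Almost Perfect Tiling Lemma (Lemma~\ref{Almost-factor}) with the Absorbing Lemma (Lemma~\ref{Absorbing-Lemma}) in the standard two-step fashion of the absorption method. First I would set up the constant hierarchy so that it is compatible with both lemmas: we need $1/n \ll \mu, \gamma' \ll \lambda \ll \zeta' \ll \gamma \ll p, \alpha, 1/f$ as demanded by Lemma~\ref{Absorbing-Lemma}, and separately $1/n \ll \mu$ small enough (in terms of $p$, $\alpha$, $f$, and the eventual error parameter $\omega$) for Lemma~\ref{Almost-factor}; since $\pi_{\points}(F)=0$ is assumed, Lemma~\ref{Almost-factor} applies to $F$. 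The key point is to relate the given hypothesis ($\heartsuit$) about bipartitions of arbitrary large induced subhypergraphs to the single hypothesis needed by Lemma~\ref{Absorbing-Lemma}, namely that every partition $\mathcal{P}=\{V_0,V_1,V_2\}$ of $V(H)$ with $|V_1|,|V_2|\ge \zeta n$ satisfies $(1,-1)\in L^{\lambda}_{\mathcal{P},F}(H)$.

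The reduction of ($\heartsuit$) to that hypothesis is the conceptual heart of the argument, so I would do it carefully. Given such a partition $\mathcal{P}=\{V_0,V_1,V_2\}$ with $|V_1|,|V_2|\ge \zeta n$, I would consider the induced subhypergraph $H' := H[V_1\cup V_2]$; then $|V(H')| \ge 2\zeta n$, and $\{X,Y\}:=\{V_1,V_2\}$ is a bipartition of $V(H')$ with both parts of size at least $\zeta n$. By ($\heartsuit$) applied with $V'=V_1\cup V_2$ we get $(1,-1)\in L^{\lambda'}_{\{X,Y\},F}(H')$. Now every copy $F'$ of $F$ inside $H'$ is in particular a copy of $F$ in $H$ with all its vertices avoiding $V_0$, so its index vector with respect to $\mathcal{P}=\{V_0,V_1,V_2\}$ is simply $(0,\mathbf{i}_{\{X,Y\}}(V(F')))$; hence every $\lambda'$-robust $F$-vector of $H'$ lifts to a $\lambda'$-robust $F$-vector of $H$ (robustness is preserved since $v(H')\le v(H)=n$), and therefore $L^{\lambda'}_{\{X,Y\},F}(H')$ embeds into $L^{\lambda'}_{\mathcal{P},F}(H)$ via $\mathbf{w}\mapsto (0,\mathbf{w})$. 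In particular $(1,-1)\in L^{\lambda'}_{\{X,Y\},F}(H')$ gives $(0,1,-1)\in L^{\lambda'}_{\mathcal{P},F}(H)$, which is exactly the statement ``$(1,-1)\in L^{\lambda'}_{\mathcal{P},F}(H)$'' in the notation convention $\{X,Y\}=\{V_1,V_2\}$ used in Lemma~\ref{Absorbing-Lemma}. Choosing the hierarchy so that $\lambda \le \lambda'$ and $\zeta = \zeta'$ (or adjusting names), the hypothesis of Lemma~\ref{Absorbing-Lemma} is met.

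With that in hand the argument closes routinely. I would apply Lemma~\ref{Absorbing-Lemma} to obtain an absorbing set $W\subseteq V(H)$ with $|W|\le \gamma n$ such that for every $U\subseteq V(H)\setminus W$ with $|U|\le \gamma' n$ and $f\mid |U|$, both $H[W]$ and $H[W\cup U]$ have $F$-factors. Then set $H^\circ := H[V(H)\setminus W]$; note $v(H^\circ) \ge (1-\gamma)n$, and since deleting at most $\gamma n$ vertices changes densities by at most $O(\gamma)n^k$ and $F\in\cover_k^{\points,k-1}$ is only used through $\pi_{\points}(F)=0$, $H^\circ$ is still an $(v(H^\circ),p,\mu',\points)$ $k$-graph for a slightly larger $\mu'$ with $\mu'\ll \omega$. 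Applying Lemma~\ref{Almost-factor} to $H^\circ$ with $\omega$ chosen so small that $\omega n < \gamma' n$ and so that the leftover is divisible appropriately — here one uses $n\in f\mathbb{N}$ and that the almost-tiling covers all but fewer than $\omega n$ vertices, so the uncovered set $U$ has $|U| < \gamma' n$ and, being the complement of a union of copies of $F$ together with $W$ inside a set of size divisible by $f$... actually $|U| = v(H^\circ) - (\text{number covered})$, and since $n - |W| - |U|$ is a multiple of $f$ and we will further need $|W\cup U|\in f\mathbb{N}$, which holds because $|W\cup U| = n - (\text{size of the almost-tiling of }H^\circ) \equiv 0 \pmod f$. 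Finally absorb $U$ into $W$ using Lemma~\ref{Absorbing-Lemma}: the almost-$F$-tiling of $H^\circ$ minus $U$, together with an $F$-factor of $H[W\cup U]$, is an $F$-factor of $H$.

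The main obstacle, as usual in these proofs, is bookkeeping the divisibility and the constant hierarchy so that $\omega n < \gamma' n$ while all of $\mu, \gamma'$ are small enough relative to $\lambda, \zeta$; and making the translation between the ($\heartsuit$)-formulation over induced subhypergraphs and the Lemma~\ref{Absorbing-Lemma}-formulation over partitions of $V(H)$ airtight, in particular checking that robust-$F$-vector lattices behave well under passing to induced subhypergraphs and padding index vectors with a leading zero. None of these steps is deep, but the lattice-lifting step is the one place where a sign or a normalization error would be easy to make, so I would write it out explicitly as above.
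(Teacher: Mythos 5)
Your proposal follows the same route as the paper: translate the hypothesis ($\heartsuit$) into the ``$(1,-1)\in L^{\lambda}_{\mathcal{P},F}(H)$ for all tripartitions'' hypothesis of Lemma~\ref{Absorbing-Lemma} by passing to $H'=H[V_1\cup V_2]$, extract an absorbing set $W$, run Lemma~\ref{Almost-factor} on $H[V(H)\setminus W]$ with $\omega=\gamma'$, and absorb the leftover. One small correction in your lattice-lifting step: the parenthetical ``robustness is preserved since $v(H')\le v(H)=n$'' has the inequality pointing the wrong way — a $\lambda'$-robust vector for $H'$ gives only $\lambda'(v(H'))^f$ witnesses, which could be far less than $\lambda' n^f$, so robustness is \emph{not} preserved at level $\lambda'$; what saves the argument is the lower bound $v(H')\ge 2\zeta n$, which shows the vector is $\lambda$-robust in $H$ for $\lambda:=\lambda'(2\zeta)^f$, and that is the explicit value the paper uses. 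Your hedge ``choosing the hierarchy so that $\lambda\le\lambda'$'' reaches the right conclusion, so the proof goes through, but the stated justification should be replaced by the lower bound on $v(H')$ rather than the upper bound.
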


\begin{proof}
 We choose constants satisfying the following hierarchy:
\[
 1/n\ll \mu, \gamma' \ll \lambda' \ll \zeta \ll \gamma \ll \alpha, p, 1/f < 1.
\]
Let $n\in f\mathbb{N}$ and $F\in \cover_{k,>}^{\points, k-1}$ with $f:=v(F)$ and $\pi_{\points}(F)=0$. Suppose that $H$ is an $(n,p,\mu,\points)$ $k$-graph with $\delta_{k-1}(H)\ge \alpha n$ and $\mathcal{P}=\{V_0,V_1,V_2\}$ is a partition of $V(H)$ with $|V_i|\geq \zeta n$ for $i=1,2$. 
Set $H':=H[V_1\cup V_2]$. By  the property ($\heartsuit$), the partition $\mathcal P'=\{V_1, V_2\}$ of $V(H')$ satisfies $(1,-1) \in L^{\lambda'}_{\mathcal{P'},F}(H')$ which implies that $(1,-1)\in L^{\lambda}_{\mathcal{P},F}(H)$, where $\lambda:=\lambda'(2\zeta)^f$.
By Lemma~\ref{Absorbing-Lemma}, we can find an absorbing set $W\subseteq V(H)$ with $|W|\leq \gamma n$.
Let $H'':=H[V(H)\setminus W]$ and  $\mu_1=\mu/(1-\gamma)^k$. Note that $H''$ is an $(n-|W|,p,\mu_1, \points)$ $k$-graph since $|W|\leq \gamma n$ and
\[ 
\mu n^k \le \frac{\mu}{(1-\gamma)^k}v(H'')^k= \mu_1 v(H'')^k.
\]
Since $\pi_{\points}(F)=0$, applying Lemma~\ref{Almost-factor} on $H''$ with $\omega=\gamma'$, we obtain an $F$-tiling that covers all but a set $U$ of at most $\gamma' n$ vertices. By the absorbing property of $W$, $H[W\cup U]$ contains an $F$-factor and thus we obtain an $F$-factor of $H$.
\end{proof}

\section{Hypergraph Regularity Method}\label{regular-lemma}
In order to prove Theorem~\ref{1/8-dot-codegree} and Theorem~\ref{factor3-2}, we will use the hypergraph regularity method for $3$-graphs to verify the conditions of Corollary~\ref{covertrans}.
Here we follow the approach from R\"{o}dl and Schacht~\cite{MR2351689,regular-lemmas2007}, combined with results from~\cite{CFKO} and~\cite{K2010Hamilton}.

The central concepts of the hypergraph regularity lemma are \emph{regular complex} and \emph{equitable partition}. Before we state the hypergraph regularity lemma, we introduce some necessary notation.
For reals $x,y,z$ we write $x = y \pm z$ to denote that $y-z \leq x \leq y+z$.

\subsection{Regular complexes}
A \emph{hypergraph} $\mathcal{H}$ consists of a vertex set $V(\mathcal{H})$ and an edge set $E(\mathcal{H})$, where every edge $e \in E(\mathcal{H})$ is a non-empty subset of $V(\mathcal{H})$. So a $3$-graph as defined earlier is a $3$-uniform hypergraph in which every edge has size $3$. A hypergraph $\mathcal{H}$ is a \emph{complex} if whenever $e \in E(\mathcal{H})$ and $e'$ is a non-empty subset of $e$ we have that $e'\in E(\mathcal{H})$. All the complexes considered in this paper have the property that all vertices are contained in an edge. A complex $\mathcal{H}^{\leq 3}$ is a \emph{$3$-complex} if all the edges of $\mathcal{H}^{\leq 3}$ consist of at most $3$ vertices. Given a $3$-complex $\mathcal{H}^{\leq 3}$, for each $i \in \{1,2,3\}$, the edges of size $i$ are called \emph{$i$-edges} of $\mathcal{H}^{\leq 3}$ and we denote by $H^{(i)}$ the \emph{underlying $i$-graph} of $\mathcal{H}^{\leq 3}$: the vertices of $H^{(i)}$ are those of $\mathcal{H}^{\leq 3}$ and the edges of $H^{(i)}$ are the $i$-edges of $\mathcal{H}^{\leq 3}$. Note that a $3$-graph $H$ can be turned into a $3$-complex by making every edge into a \emph{complete $i$-graph} $K^{(i)}_3$ (i.e., consisting of all $\binom{3}{i}$ different $i$-tuples on $3$ vertices), for each $ i\in \{1,2,3\}$.

Given positive integers $s\geq 3$ and $i\in \{1,2,3\}$, an \emph{$(s,i)$-graph} $H^{(i)}_s$ is an $s$-partite $i$-graph, by which we mean that the vertex set of $H^{(i)}_s$ can be partitioned into sets $V_1,\dots, V_s$ such that every edge of $H^{(i)}_s$ meets each $V_j$ in at most one vertex for $j\in [s]$. For every $\Lambda_i\in [s]^i$, let $H^{(i)}_{s}[\cup_{\lambda\in \Lambda_i }V_{\lambda}]$ denote the  subgraph of $H^{(i)}_{s}$ induced by  $\cup_{\lambda\in \Lambda_i }V_{\lambda}$.
Similarly, an \emph{$(s,i)$-complex} $\mathcal{H}^{\leq i}_s$ is an $s$-partite $i$-complex.

Given $i \in\{2,3\}$, let $H^{(i)}_i$ and $H^{(i-1)}_i$ be on the same vertex set. We denote by $\mathcal{K}_i(H^{(i-1)}_i)$ the family of $i$-sets of vertices which form a copy of the complete $(i-1)$-graph $K^{(i-1)}_i$ in $H^{(i-1)}_i$. We define the \emph{density} of $H^{(i)}_i$ w.r.t. $H^{(i-1)}_i$ to be
\[
d(H^{(i)}_i|H^{(i-1)}_i):= \begin{cases}
\frac{|E(H^{(i)}_i)\cap \mathcal{K}_i(H^{(i-1)}_i)|}{|\mathcal{K}_i(H^{(i-1)}_i)|} &\text{if\ } |\mathcal{K}_i(H^{(i-1)}_i)|>0,\\
0&\text{otherwise}.
\end{cases}
\]
More generally, if $\mathbf{Q}:= (Q(1), Q(2),\dots, Q(r))$ is a collection of $r$ subgraphs of $H^{(i-1)}_i$, we define $\mathcal{K}_i(\mathbf{Q}):= \bigcup^r_{j=1}\mathcal{K}_i(Q(j))$ and
\[
d(H^{(i)}_i|\mathbf{Q}):= \begin{cases}
\frac{|E(H^{(i)}_i)\cap \mathcal{K}_i(\mathbf{Q})|}{|\mathcal{K}_i(\mathbf{Q})|} &\text{if\ } |\mathcal{K}_i(\mathbf{Q})|>0,\\
0&\text{otherwise}.
\end{cases}
\]

We say that an $H^{(i)}_i$ is \emph{$(d_i, \delta, r)$-regular} w.r.t. an $H^{(i-1)}_i$ if every $r$-tuple $\mathbf{Q}$ with $|\mathcal{K}_i(\mathbf{Q})| \geq \delta|\mathcal{K}_i(H^{(i-1)}_i)|$ satisfies $d(H^{(i)}_i|\mathbf{Q}) = d_i \pm \delta$. Instead of $(d_i, \delta, 1)$-regular, we refer to $(d_i, \delta)$-\emph{regular}.
Moreover, for $i\in \{2,3\}$ and $s \geq i$, we say that $H^{(i)}_{s}$ is \emph{$(d_i, \delta,r)$-regular} w.r.t. $H^{(i-1)}_{s}$ if for every $\Lambda_i\in  [s]^i$ the restriction $H^{(i)}_{s}[\Lambda_i]=H^{(i)}_{s}[\cup_{\lambda\in \Lambda_i }V_{\lambda}]$ is \emph{$(d_i, \delta,r)$-regular} w.r.t. the restriction $H^{(i-1)}_{s}[\Lambda_i]=H^{(i-1)}_{s}[\cup_{\lambda\in \Lambda_i }V_{\lambda}]$.

\begin{defi}[$(d_2, d_3, \delta_3, \delta, r)$-regular complexes]
Given $s\ge 3$ and an $(s, 3)$-complex $\mathcal{H}$, we say that $\mathcal{H}$ is \emph{$(d_2, d_3, \delta_3, \delta, r)$-regular} if the following conditions hold:\vspace{3mm}

$\bullet$ for every $2$-tuple $\Lambda_2$ of vertex classes, either $H_s^{(2)}[\Lambda_2]$ is $(d_2, \delta)$-regular w.r.t $H_{s}^{(1)}[\Lambda_2]$ or 
\\
$d(H_s^{(2)}[\Lambda_2]|H^{(1)}_s[\Lambda_2]) = 0$;

$\bullet$ for every $3$-tuple $\Lambda_3$ of vertex classes either $H^{(3)}_s[\Lambda_3]$ is $(d_3, \delta_3, r)$-regular w.r.t $H^{(2)}_s[\Lambda_3]$ or $d(H^{(3)}_s[\Lambda_3]|H^{(2)}_s[\Lambda_3]) = 0$.\vspace{3mm}
\end{defi}

\subsection{Equitable partition}\label{section-equ-partition}


Suppose that $V$ is a finite set of vertices and $\mathcal{P}^{(1)}$ is a partition of $V$ into sets $V_1, \dots , V_{a_1}$, which will be called \emph{clusters}. For two disjoint sets $V_i$ and $V_j$, we denote by $K(V_i,V_j)$ the \emph{complete bipartite} graph between $V_i$ and $V_j$. We denote by  $\mathrm{Cross}_2:=\bigcup_{1\leq i<j\leq a_1}K(V_i,V_j)$ and $\mathrm{Cross}_3$ the set of all $3$-subsets of $V$ that meet each $V_i$ in at most one vertex for $1\leq i\leq a_1$. Let $\mathcal{P}^{ij}$ be a partition of $K(V_i,V_j)$. Then the partition classes of $\mathcal{P}^{ij}$ are some bipartite graphs $P^{(2)}$. Let
$\mathcal{P}^{(2)}=\cup_{1\le i<j\le a_1} \mathcal{P}^{ij}$. So $\mathcal{P}^{(2)}$ is a partition of $\mathrm{Cross}_2$ into several bipartite graphs. 


For every $\{x,y\}\in \mathrm{Cross}_2$ with $x\in V_i$ and $y\in V_j$ for $1\le i<j\le a_1$, there exists a unique bipartite graph $P^{(2)}(\{x,y\})\in \mathcal{P}^{(2)}$ so that $\{x,y\}\in P^{(2)}(\{x,y\})$. Now we give the definition of a \emph{polyad}. The polyad of $\{x,y\}$ is
\[
\hat{P}^{(1)}=\hat{P}^{(1)}(\{x,y\})=V_i\dot\cup V_j.
\]
Clearly, $\mathcal{K}_2(\hat{P}^{(1)})=K(V_i,V_j)$.  For every $3$-set $J\in \mathrm{Cross}_3$ the polyad of $J$ is:
\[
\hat{P}^{(2)}(J):=\bigcup\big\{P^{(2)}(I): I\in [J]^{2}\big\}.
\]
So we can view $\hat{P}^{(2)}(J)$ as a $3$-partite graph (whose vertex classes are clusters intersecting $J$). Let $\mathcal{\hat{P}}^{(2)}$ be the set consisting of all the $\hat{P}^{(2)}(J)$ for all $J\in \mathrm{Cross}_3$. It is easy to verify $\{\mathcal{K}_3(\hat{P}^{(2)}) : \hat{P}^{(2)}\in \mathcal{\hat{P}}^{(2)}\}$ is a partition of $\mathrm{Cross}_3$.

Given a vector of positive integers $\mathbf{a}=(a_1,a_2)$, we say that $\mathcal{P} = \{\mathcal{P}^{(1)},\mathcal{P}^{(2)}\}$ is a \emph{family of partitions} on $V$, if the following conditions hold:\vspace{3mm}

$\bullet$ $\mathcal{P}^{(1)}$ is a partition of $V$ into $a_1$ clusters.

$\bullet$ $\mathcal{P}^{(2)}$ is a partition of $\mathrm{Cross}_2$ satisfying that 
for every $P^{(2)}\in \mathcal{P}^{(2)}$, there exists $\{i,j\}\in [a_1]^2$ such that $P^{(2)} \subseteq K(V_i,V_j)$. Moreover, 
$|\{P^{(2)}\in \mathcal{P}^{(2)}: P^{(2)}\subseteq  K(V_i,V_j)\}|=a_2$ for every $\{i,j\}\in [a_1]^2$.

So for each $J \in \mathrm{Cross}_3$ we can view $\bigcup^{2}_{i=1}\hat{P}^{(i)}(J)$ as a $(3, 2)$-complex.
\vspace{3mm}

\begin{defi}[$(\eta, \delta, t)$-equitable]\label{eq-partition}
Suppose $V$ is a set of $n$ vertices, $t$ is a positive integer and $\eta, \delta>0$. We say a family of partitions $\mathcal{P}$ is \emph{$(\eta, \delta, t)$-equitable} if it satisfies the following:
\begin{enumerate}
  \item $\mathcal{P}^{(1)}$ is a partition of $V$ into $a_1$ clusters of equal size, where $1/\eta \leq a_1 \leq t$ and $a_1$ divides $n$;
  \item $\mathcal{P}^{(2)}$ is a partition of $\mathrm{Cross}_2$ into at most $t$ bipartite subgraphs;
  \item there exists $d_2$ such that $d_2\geq 1/t$ and $1/d_2 \in \mathbb{N}$;
  \item for every $3$-set $K \in \mathrm{Cross}_3$, $\hat{P}^{(2)}(K)$ is $(d_2, \delta)$-regular w.r.t. $\bigcup_{I\in [K]^{2}}\hat{P}^{(1)}(I)$.
\end{enumerate}
\end{defi}
Note that the final condition implies that the classes of $\mathcal{P}^{(2)}$ have almost equal size.

\subsection{Statement of the regularity lemma.}
Let $\delta_3 > 0$ and $r \in \mathbb{N}$. Suppose that $H$ is a $3$-graph on $V$ and $\mathcal{P}$ is a family of partitions on $V$. Given a polyad $\hat{P}^{(2)} \in \hat{\mathcal{P}}^{(2)}$, we say that $H$ is \emph{$(\delta_3, r)$-regular} w.r.t. $\hat{P}^{(2)}$ if $H$ is $(d_3, \delta_3, r)$-regular w.r.t. $\hat{P}^{(2)}$ for some $d_3$. Finally, we define that $H$ is \emph{$(\delta_3, r)$-regular} w.r.t. $\mathcal{P}$.

\begin{defi}[\emph{$(\delta_3, r)$-regular} w.r.t. $\mathcal{P}$]
We say that a $3$-graph $H$ is \emph{$(\delta_3, r)$-regular} w.r.t. $ \mathcal{P}$ if
\[
\big|\bigcup\big\{\mathcal{K}_3(\hat{P}^{(2)}) : \hat{P}^{(2)}\in \mathcal{\hat{P}}^{(2)}\\
\text{and\ } H \text{\ is\ not\ } (\delta_3, r)\text{-regular\ w.r.t.\ } \hat{P}^{(2)} \big\}\big| \le \delta_3 |V|^3.
\]
\end{defi}
This means that no more than a $\delta_3$-fraction of the $3$-subsets of $V$ form a $K_3^{(2)}$ that lies within a polyad w.r.t. which $H$ is not regular.

Now we are ready to state the regularity lemma.
\begin{thm}[Regularity lemma \rm{\cite[Theorem 17]{regular-lemmas2007}}] \label{Reg-lem}
For all positive constants $\eta$ and $\delta_3$ and all functions $r:\mathbb{N} \rightarrow \mathbb{N}$ and $\delta:\mathbb{N} \rightarrow (0,1]$, there are integers $t$ and $n_0$ such that the following holds.
For every $3$-graph $H$ of order $n\ge n_0$ and $t!$ dividing $n$, there exists a family of partitions $ \mathcal{P}$ of $V(H)$ such that\vspace{2mm}

$(1)$ $\mathcal{P}$ is $(\eta, \delta(t), t)$-equitable and

$(2)$ $H$ is $(\delta_3, r(t))$-regular w.r.t. $\mathcal{P}$.
\end{thm}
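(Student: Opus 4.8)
The statement is the ($3$-uniform) regularity lemma of R\"odl and Schacht, so strictly speaking one only needs to invoke \cite{regular-lemmas2007}; for completeness let me sketch the strategy, which is an iterated refinement (energy-increment) argument run simultaneously on the cluster level $\mathcal{P}^{(1)}$ and the pair level $\mathcal{P}^{(2)}$. First I would fix a bounded index functional $q(\mathcal{P})\in[0,1]$ measuring how ``structured'' a family of partitions is: essentially a weighted mean of the squares of the $3$-level densities $d(H^{(3)}\,|\,\hat P^{(2)}(J))$ over cross triples $J$, together with the analogous mean-square contribution of the $2$-level densities $d(P^{(2)}\,|\,\hat P^{(1)})$ of the bipartite cells. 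The initial family is an arbitrary balanced partition $\mathcal{P}^{(1)}$ of $V$ into $\lceil 1/\eta\rceil$ clusters together with the trivial partition of $\mathrm{Cross}_2$; a preliminary regularisation step then upgrades it so that property~(4) of $(\eta,\delta,t)$-equitability in Definition~\ref{eq-partition} holds.

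The increment step is the core. Suppose the current family $\mathcal{P}$ is $(\eta,\delta(t),t)$-equitable but $H$ is \emph{not} $(\delta_3,r(t))$-regular with respect to $\mathcal{P}$. Then at least $\delta_3|V|^3$ cross triples lie in polyads $\hat P^{(2)}$ for which $H$ is not $(\delta_3,r)$-regular, and each such polyad carries an $r$-tuple $\mathbf{Q}$ of subgraphs with $|\mathcal{K}_3(\mathbf{Q})|\ge\delta_3|\mathcal{K}_3(\hat P^{(2)})|$ and $|d(H^{(3)}\,|\,\mathbf{Q})-d(H^{(3)}\,|\,\hat P^{(2)})|>\delta_3$. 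Refining $\mathcal{P}^{(2)}$ inside each such polyad according to the cells cut out by these witnessing $r$-tuples, and running a defect Cauchy--Schwarz estimate, shows $q$ increases by a constant $c=c(\delta_3)>0$. After refining $\mathcal{P}^{(2)}$ one must re-establish property~(4) — every polyad should again be $(d_2,\delta)$-regular with a common density $d_2$ — by a ``slicing'' subroutine applying a graph-regularity/random-slicing argument to the new bipartite cells and passing to a common refinement; this only multiplies the number of cells by a bounded factor and costs at most $o(1)$ in $q$, so the net gain per round remains bounded below.

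Since $q\in[0,1]$ and rises by at least $c(\delta_3)/2$ per round, the process terminates after $O(1/c(\delta_3))$ rounds; the number of clusters and of bipartite cells stays bounded throughout by a tower-type function of $\eta$, $\delta_3$ and $r$, yielding the promised $t$ (and $n_0$). A final cleanup fixes the remaining bookkeeping demanded by Definition~\ref{eq-partition}: $a_1\mid n$ and equal cluster sizes, $1/\eta\le a_1\le t$, and the existence of $d_2\ge 1/t$ with $1/d_2\in\mathbb{N}$, all achieved by discarding a bounded number of vertices into a leftover cluster and slightly coarsening the $d_2$, operations that perturb every relevant quantity by only $o(1)$.

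The genuinely hard part is not the energy increment (which mirrors Szemer\'edi's proof for graphs) but the maintenance of property~(4) of equitability under refinement: a naive refinement of the pair partition destroys the ``regularity between layers'' that makes the $3$-level densities meaningful in the first place. Re-regularising after every step, and choosing the parameters $\delta(t)$ and $r(t)$ as functions of $t$ precisely so that this re-regularisation is affordable, is exactly what forces the lemma into its functional form and constitutes the technical heart of \cite{regular-lemmas2007}.
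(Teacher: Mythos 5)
The paper does not prove this theorem; it is imported verbatim as Theorem~17 of R\"odl--Schacht \cite{regular-lemmas2007}, so there is no internal argument to compare against. Your sketch of the iterated-refinement/energy-increment strategy — a bounded mean-square index functional over both the pair-level and triple-level densities, an increment step driven by witnessing $r$-tuples and a defect Cauchy--Schwarz, followed by re-regularisation of the bipartite cells to restore property~(4) of equitability — is a faithful high-level summary of the known proof, and you correctly identify why the statement must take its functional form (the parameters $\delta(t)$, $r(t)$ must be chosen after $t$ precisely because the re-regularisation cost depends on the number of cells). Since the lemma is being used as a black box in this paper, simply citing it is all that is required; your outline is accurate but, of course, far from the full proof, whose quantitative bookkeeping (the interplay between $\delta$, $\delta_3$, $r$, and the tower-type bound on $t$) is exactly where the difficulty lives.
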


Note that the constants in Theorem~\ref{Reg-lem} can be chosen to satisfy the following hierarchy:
\[
\frac{1}{n_0}\ll \frac{1}{r}=\frac{1}{r(t)}, \delta=\delta(t)\ll \min\{\delta_3,1/t\}\ll \eta.
\]
Given $d\in (0,1)$, we say that an edge $e$ of $H$ is \emph{$d$-useful} if it lies in $\mathcal{K}_3 (\hat{P}^{(2)})$ for some $\hat{P}^{(2)} \in \hat{\mathcal{P}}^{(2)}$ such that $H$ is $(d_3, \delta_3, r)$-regular w.r.t. $\hat{P}^{(2)}$ for some $d_3\geq d$. If we choose $d\gg \eta$, then the following lemma will be helpful in later proofs.

\begin{lemma}\rm{(\cite[Lemma 4.4]{K2010Hamilton}).}\label{useful-edge}
At most $2dn^3$ edges of $3$-graph $H$ are not $d$-useful.
\end{lemma}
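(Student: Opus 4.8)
\textbf{Proof plan for Lemma~\ref{useful-edge}.} The statement asserts that at most $2dn^3$ edges of $H$ fail to be $d$-useful, so the plan is to bound the number of ``bad'' edges by splitting them into two types according to why they fail the $d$-usefulness condition. An edge $e\in E(H)$ fails to be $d$-useful either because the unique polyad $\hat P^{(2)}\in\hat{\mathcal P}^{(2)}$ with $e\in\mathcal K_3(\hat P^{(2)})$ is one w.r.t.\ which $H$ is \emph{not} $(\delta_3,r)$-regular, or because $H$ \emph{is} $(d_3,\delta_3,r)$-regular w.r.t.\ that polyad but with density $d_3<d$. First I would recall that, since $\mathcal P^{(1)}$ partitions $V(H)$ into $a_1$ clusters and $\mathcal P^{(2)}$ refines $\mathrm{Cross}_2$, the family $\{\mathcal K_3(\hat P^{(2)}):\hat P^{(2)}\in\hat{\mathcal P}^{(2)}\}$ is a partition of $\mathrm{Cross}_3$ (this was noted in Section~\ref{section-equ-partition}); so every edge of $H$ lying in $\mathrm{Cross}_3$ sits in exactly one $\mathcal K_3(\hat P^{(2)})$, and the few edges of $H$ not in $\mathrm{Cross}_3$ (those with two vertices in a common cluster) number at most $a_1\binom{n/a_1}{2}n\le n^3/a_1\le \eta n^3$, which is negligible for $d\gg\eta$ — or can simply be absorbed into one of the two terms below.

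For the first type, the edges lying in irregular polyads: by part~(2) of the conclusion of Theorem~\ref{Reg-lem}, $H$ is $(\delta_3,r(t))$-regular w.r.t.\ $\mathcal P$, which by definition means exactly that
\[
\Big|\bigcup\big\{\mathcal K_3(\hat P^{(2)}):\hat P^{(2)}\in\hat{\mathcal P}^{(2)},\ H\text{ is not }(\delta_3,r)\text{-regular w.r.t.\ }\hat P^{(2)}\big\}\Big|\le\delta_3 n^3.
\]
Every bad edge of the first type lies in one of these $\mathcal K_3(\hat P^{(2)})$, hence there are at most $\delta_3 n^3$ of them. For the second type, the edges in regular polyads of density $<d$: fix such a polyad $\hat P^{(2)}$, with $d_3=d(H^{(3)}\mid\hat P^{(2)})<d$. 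The number of edges of $H$ inside $\mathcal K_3(\hat P^{(2)})$ is then $d_3|\mathcal K_3(\hat P^{(2)})|<d\,|\mathcal K_3(\hat P^{(2)})|$. Summing over all polyads of this second type and using again that the sets $\mathcal K_3(\hat P^{(2)})$ partition $\mathrm{Cross}_3$, the total is at most $d\sum_{\hat P^{(2)}}|\mathcal K_3(\hat P^{(2)})|\le d\,|\mathrm{Cross}_3|\le d\binom{n}{3}\le dn^3$.

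Adding the two bounds, the number of edges of $H$ that are not $d$-useful is at most $\delta_3 n^3+dn^3+\eta n^3$ (the last term accounting for non-cross edges). Since in the regularity hierarchy one has $\delta_3,\eta\ll d$, in fact $\delta_3 n^3+\eta n^3\le d n^3$ for $n$ large, giving the claimed bound $2dn^3$. The only mild subtlety — and the step I would be most careful about — is keeping the bookkeeping of which polyad an edge belongs to consistent (using that $\hat{\mathcal P}^{(2)}$ yields a genuine partition of $\mathrm{Cross}_3$ into the sets $\mathcal K_3(\hat P^{(2)})$, so no edge is double-counted), together with the routine check that the few edges not in $\mathrm{Cross}_3$ are swamped by the $dn^3$ slack; there is no real obstacle here, as this is essentially a repackaging of the definition of $(\delta_3,r)$-regularity w.r.t.\ $\mathcal P$ plus a density count.
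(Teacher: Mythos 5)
Your argument is correct, and it is the standard one for this lemma. Note that the paper does not actually supply a proof here — it cites the statement as Lemma~4.4 of~\cite{K2010Hamilton} — so there is no in-paper argument to compare against; your three-way split (edges outside $\mathrm{Cross}_3$, edges in irregular polyads, edges in regular polyads of density below $d$), together with the hierarchy $\delta_3\ll\eta\ll d$, is exactly how one expects the cited proof to go. One small point worth tightening: a polyad $\hat P^{(2)}$ that is $(d_3,\delta_3,r)$-regular only for values $d_3<d$ can have actual edge density $d(H\mid\hat P^{(2)})$ as large as $d_3+\delta_3<d+\delta_3$ (taking $\mathbf Q=(\hat P^{(2)},\dots,\hat P^{(2)})$ in the definition), so the count for that case is bounded by $(d+\delta_3)\lvert\mathcal K_3(\hat P^{(2)})\rvert$ rather than $d\lvert\mathcal K_3(\hat P^{(2)})\rvert$; this extra $\delta_3$ is absorbed by the same $\delta_3\ll d$ slack you already invoke for the other two terms, so the claimed $2dn^3$ bound still follows.
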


To ease notation, we denote by $P^{ij}_{a}=(V_i\dot\cup V_j,E^{ij}_{a})$ a specific bipartite subgraph in $K(V_i,V_j)$ and let $P^{ijk}_{abc}=P^{ij}_{a}\cup P^{ik}_{b}\cup P^{jk}_{c}$ denote a specific polyad over $2$-graphs, i.e. $ P^{ijk}_{abc}=(V_i\dot\cup V_j\dot\cup V_k,E^{ij}_{a}\cup E^{ik}_{b}\cup E^{jk}_{c})$.

\subsection{Statement of a counting lemma.}
In our proofs we shall also use a counting lemma. Before stating this lemma, we need more definitions.

Suppose that $\mathcal{H}$ is an $(s, 3)$-complex with vertex classes $V_1,\dots,V_s$, which all have size $m$. Suppose also that $\mathcal{G}$ is an $(s, 3)$-complex with vertex classes $X_1,\dots, X_s$ of size at most $m$. For $i\in\{2,3\}$, we write $E_i(\mathcal{G})$ for the set of all $i$-edges of $\mathcal{G}$ and $e_i(\mathcal{G}):= |E_i(\mathcal{G})|$. We say that $\mathcal{H}$ \emph{respects the partition} of $\mathcal{G}$ if whenever $\mathcal{G}$ contains an $i$-edge with vertices in $X_{j_1}, \dots , X_{j_i}$, then there is an $i$-edge of $\mathcal{H}$ with vertices in $V_{j_1}, \dots , V_{j_i}$. On the other hand, we say that a labelled copy of $\mathcal{G}$ in $\mathcal{H}$ is \emph{partition-respecting} if for each $i \in[s]$ the vertices corresponding to those in $X_i$ lie within $V_i$. We denote by $|\mathcal{G}|_{\mathcal{H}}$ the number of labelled, partition-respecting copies of $\mathcal{G}$ in $\mathcal{H}$.

\begin{lemma}\label{Counting-lem}{\rm(Counting lemma {\cite[Theorem 4]{CFKO}}).} Let $s,r,t,n_0$ be positive integers and
let $d_2, d_3$, $\delta,\delta_3$ be positive constants such that $1/{d_2},1/{d_3}\in \mathbb{N}$ and
\[
1/{n_0}\ll 1/r, \delta\ll \min\{\delta_3, d_2 \}\ll \delta_3 \ll d_3, 1/s, 1/t.
\]
Then the following holds for all integers $n\ge n_0$. Suppose that $\mathcal{G}$ is an $(s, 3)$-complex on $t$ vertices with vertex classes $X_1,\dots, X_s$. Suppose also that $\mathcal{H}$ is a $(d_2,d_3,\delta_3,\delta, r)$-regular $(s, 3)$-complex with vertex classes $V_1,\dots,V_s$ all of size $n$, which respects the partition of $\mathcal{G}$. Then
\[
|\mathcal{G}|_{\mathcal{H}}\ge \frac{1}{2}n^t d^{e_2(\mathcal{G})}_2d^{e_3(\mathcal{G})}_3.
\]
\end{lemma}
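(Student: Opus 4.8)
The plan is to prove this by induction on $t$, embedding the vertices of $\mathcal{G}$ into $\mathcal{H}$ one at a time and maintaining, at each stage, a large family of well-behaved partition-respecting partial copies. Since only the one-sided bound $\tfrac12 n^t d_2^{e_2(\mathcal{G})}d_3^{e_3(\mathcal{G})}$ is required, it is enough to show that every well-behaved partial embedding of the first $j$ vertices extends in at least $(1-\varepsilon)d_2^{t_2}d_3^{t_3}n$ ways, where $t_2$ and $t_3$ count the $2$-edges and $3$-edges of $\mathcal{G}$ newly completed by the $(j+1)$-st vertex, and that most of these extensions are again well-behaved. Telescoping over $j=0,\dots,t-1$ and using $\sum_j t_2=e_2(\mathcal{G})$ and $\sum_j t_3=e_3(\mathcal{G})$ then produces a count of at least $(1-2\varepsilon)^{t}n^t d_2^{e_2(\mathcal{G})}d_3^{e_3(\mathcal{G})}$, which beats $\tfrac12 n^t d_2^{e_2(\mathcal{G})}d_3^{e_3(\mathcal{G})}$ once $\varepsilon$ is small relative to the fixed constant $t$.

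First I would isolate three standard consequences of regularity, to be used as black boxes. (i) A \emph{slicing lemma}: the restriction of a $(d_2,\delta)$-regular bipartite graph (respectively, of a $3$-graph that is $(d_3,\delta_3,r)$-regular with respect to a triad) to linear-sized subsets of its vertex classes is still regular, with only a controlled weakening of the $\delta$-parameters. (ii) A \emph{degree/density fact}: a $(d_2,\delta)$-regular bipartite pair with parts of linear size has $(1\pm O(\delta))$ times the expected number of edges, and all but an $O(\delta)$-fraction of the vertices on one side have the expected degree into any given linear-sized subset of the other side. (iii) A \emph{triangle-count fact}: a $(d_2,\delta)$-regular triad of bipartite graphs on parts of size $m$ carries $(1\pm O(\delta))d_2^3m^3$ triangles $K_3^{(2)}$, so the denominator $|\mathcal{K}_3(\hat P^{(2)})|$ in the definition of $d(H^{(3)}\,|\,\hat P^{(2)})$ is never degenerate. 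These all follow from iterated Cauchy--Schwarz / defect-form arguments, using $\delta,1/r\ll d_2$ and $\delta_3\ll d_3$.

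With these in hand, the core step runs as follows. Fix an ordering $v_1,\dots,v_t$ of $V(\mathcal{G})$ and suppose $v_1,\dots,v_j$ have been embedded as $w_1,\dots,w_j$ with all $2$- and $3$-edges among them present in $\mathcal{H}$; call this embedding \emph{good} if, moreover, every relevant bipartite neighbourhood and every relevant triad link count is of the expected order. The set $A$ of admissible images of $v_{j+1}$ in its class is the intersection over the $2$-edges $\{v_\ell,v_{j+1}\}\in E_2(\mathcal{G})$ of the bipartite neighbourhoods $N_{H^{(2)}}(w_\ell)$ inside the appropriate partition class, intersected further, for each $3$-edge $\{v_\ell,v_{\ell'},v_{j+1}\}\in E_3(\mathcal{G})$, with the set of vertices completing a $3$-edge of $\mathcal{H}^{(3)}$ with $w_\ell,w_{\ell'}$. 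By repeated use of (ii), the $2$-edge constraints leave $(1-O(\delta))d_2^{t_2}n$ candidates and never shrink a class below linear size; then $(d_3,\delta_3,r)$-regularity of $\mathcal{H}^{(3)}$ relative to each relevant triad --- whose triangle count remains under control by (iii) applied after (i) --- trims this by a factor $(1\pm\delta_3)d_3$ per $3$-edge, leaving at least $(1-\varepsilon)d_2^{t_2}d_3^{t_3}n$ images. A further application of (ii)--(iii) and a union bound shows that all but a $o(1)$-fraction of these images extend a good $j$-embedding to a good $(j+1)$-embedding, so writing $g_j$ for the number of good embeddings of the first $j$ vertices we get $g_{j+1}\ge(1-2\varepsilon)d_2^{t_2}d_3^{t_3}n\cdot g_j$; since $g_0=1$ and $|\mathcal{G}|_{\mathcal{H}}\ge g_t$, the induction closes.

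The hard part will be (iii) entangled with the $3$-edge step: because $\mathcal{H}^{(3)}$ is regular only \emph{relative to triads} $\hat P^{(2)}$, invoking its $(d_3,\delta_3,r)$-regularity while counting extensions forces me to certify that, after the up to $t$ successive restrictions of the candidate classes, the relevant triad still carries at least a $\delta_3$-fraction of the triangles it started with --- otherwise the regularity hypothesis is vacuous. Propagating this triangle lower bound through the restrictions while keeping all additive errors below $\delta_3$, uniformly over the boundedly many triads in play, is precisely the delicate bookkeeping that the Nagle--R\"odl--Schacht--Skokan / R\"odl--Skokan counting-lemma machinery is designed for, and it dictates the hierarchy $1/n_0\ll 1/r,\delta\ll\min\{\delta_3,d_2\}\ll\delta_3\ll d_3,1/s,1/t$: $\delta$ and $1/r$ must beat $d_2$ so that $2$-level restrictions stay regular, $\delta_3$ must beat $d_3$ so that $3$-level restrictions stay regular, and all constants are chosen only after $s$ and $t$ are fixed so that the cumulative multiplicative error over the $t$ rounds stays below $\tfrac12$. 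In the paper one simply cites \cite{CFKO} for the statement; the above is the shape of that proof.
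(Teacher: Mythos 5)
The paper does not prove this lemma; it simply imports it from \cite{CFKO}, so there is no proof in the paper to compare your sketch against. Evaluated on its own terms, your vertex-by-vertex outline captures the correct high-level shape of a counting-lemma proof (order the vertices of $\mathcal{G}$, control the candidate set after each embedding, telescope a multiplicative $(1\pm\varepsilon)$ error over at most $t$ rounds), and your identification of the constant hierarchy's role is accurate. However, there is a genuine gap in the core step.

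The gap is where you assert that the $3$-edge constraint ``trims this by a factor $(1\pm\delta_3)d_3$ per $3$-edge'' by appealing to $(d_3,\delta_3,r)$-regularity of $\mathcal{H}^{(3)}$ relative to the triad. That regularity controls the density $d(H^{(3)}_i\mid\mathbf{Q})$ only for $r$-tuples $\mathbf{Q}$ of subgraphs of the triad whose combined triangle set carries at least a $\delta_3$-fraction of $|\mathcal{K}_3(\hat P^{(2)})|$. But the constraint coming from a single $3$-edge $\{v_\ell,v_{\ell'},v_{j+1}\}$ is a question about the link of the \emph{single fixed pair} $(w_\ell,w_{\ell'})$: the set of triangles of $\hat P^{(2)}$ through this one pair carries only about a $1/m^2$-fraction of the total, far below $\delta_3$, so the regularity hypothesis is vacuous there and cannot be invoked directly. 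One cannot simply intersect with ``the set of vertices completing a $3$-edge with $w_\ell,w_{\ell'}$'' and expect a $(1\pm\delta_3)d_3$ factor for free. What is actually needed is an averaging / defect-Cauchy--Schwarz argument showing that for all but a $o(1)$-fraction of the pairs surviving the $2$-level constraints, the pair's link into the current candidate class has size $(1\pm o(1))\,d_3$ times the expected; this is proved by aggregating the ``bad'' pairs into an $r$-tuple $\mathbf{Q}$ whose triangle mass does exceed the $\delta_3$-threshold and deriving a contradiction with regularity. This concentration step (together with verifying that the aggregated $\mathbf{Q}$ one builds stays inside the triad and that its triangle count is computable from the $2$-level regularity) is the real content of the counting lemma, and it is precisely what your sketch treats as routine bookkeeping about triangle counts surviving restriction. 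Your item (iii) and the final paragraph gesture at this but conflate ``the restricted triad still has many triangles'' (a prerequisite you correctly flag) with ``individual pair links have typical size'' (the actual statement you need, which requires the separate averaging argument). Without that step, the inductive bound $g_{j+1}\ge(1-2\varepsilon)d_2^{t_2}d_3^{t_3}n\cdot g_j$ does not follow.
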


\subsection{Reduced Hypergraphs.}

In order to use the counting lemma, we need to know the distribution of dense and regular polyads. For this purpose we need to introduce the so-called \textit{reduced hypergraphs} for 3-graphs. The terminology below follows \cite[Section~3]{reiher2018some}.

Given a $3$-graph $H$ on $n$ vertices with $n$ sufficiently large, applying Theorem~\ref{Reg-lem} to $H$, we obtain a family of partitions $ \mathcal{P}=\{\mathcal{P}^{(1)},\mathcal{P}^{(2)}\}$ of $V(H)$ such that

$(1)$ $\mathcal{P}$ is $(\eta, \delta(t), t)$-equitable and

$(2)$ $H$ is $(\delta_3, r(t))$-regular w.r.t. $\mathcal{P}$.

By the family of partitions $ \mathcal{P}$, we define an  auxiliary $3$-graph $\mathcal{A}$ of $H$ as follows.
For each bipartite subgraphs 
$P^{(2)}$ in $\mathcal{P}^{(2)}$, we view $P^{(2)}$ as a vertex of $\mathcal{A}$, namely, the sets $\mathcal{P}^{ij}$ for $\{i,j\}\in [a_1]^2$ as the \textit{vertex classes} of $\mathcal{A}$, and the 3-partite 3-graph $\mathcal{A}^{ijk}$ as the \textit{constituents} of $\mathcal{A}$
for any three distinct indices $i, j, k \in [a_1]$. We define $\mathcal{A}^{ijk}$ with vertex classes $\mathcal{P}^{ij}$, $\mathcal{P}^{jk}$ and $\mathcal{P}^{ik}$, and a triple $\{P^{ij}_a,P^{ik}_b,P^{jk}_c\}$ is defined to be an edge of $\mathcal{A}^{ijk}$ if and only if $H$ is $(d,\delta_3,r)$-regular w.r.t. $P^{ijk}_{abc}$ for some $d\geq d_3$.
 Under such circumstances we call the $\binom{|a_1|}{2}$-partite $3$-graph $\mathcal{A}$ defined by
\[
V(\mathcal{A})=\bigcup_{\{i,j\}\in {I}^{2}}\mathcal{P}^{ij} \ \ \text{and}\ \
E(\mathcal{A})=\bigcup_{\{i,j,k\}\in {I}^{3}}E(\mathcal{A}^{ijk})
\]
a \textit{reduced hypergraph} of $H$. For any three vertex subsets $P_1,P_2,P_3\subseteq V(\mathcal{A})$, let $E_{\mathcal{A}}(P_1,P_2,P_3)$ be the set of $(p_1,p_2,p_3)\in P_1\times P_2\times P_3$ such that $\{p_1,p_2,p_3\}\in E(\mathcal{A})$. For $\eta>0$ such a reduced hypergraph $\mathcal{A}$ is said to be \emph{$\eta$-dense} if
\[
|E(\mathcal{A}^{ijk})|\geq \eta |\mathcal{P}^{ij}||\mathcal{P}^{ik}||\mathcal{P}^{jk}|
\]
holds for every triple $\{i,j,k\}\in I^{3}$.

\section{Proof of Theorem~\ref{1/8-dot-codegree}}\label{1/8}

In order to prove Theorem~\ref{1/8-dot-codegree}, it suffices to verify the conditions in Corollary~\ref{covertrans}. The subsequent two results serve this purpose.

\begin{prop}\label{cover1}
	Given  $0<\alpha<1$ and a $k$-partite $k$-graph $F$ with $f:=v(F)$,  there exists $\gamma>0$ and an $n_0\in \mathbb{N}$ such that the following holds for $n\geq n_0$.
If $H$ is an $n$-vertex $k$-graph with $\delta _{k-1}(H)\geq \alpha n$, then for any vertex $v$ in $V(H)$, $v$ is contained in more than $\gamma n^{f-1}$ copies of $F$, namely,  $F\in \cover_{k,>}^{\points ,k-1}$.
\end{prop}

\begin{proof}
Given a $k$-partite $k$-graph $F$ with vertex classes $X_1,\dots,X_k$, let $1/n \ll \mu\ll \gamma \ll \alpha<1$ and $m:=\max\{|X_i|: i\in[k]\}$.
Suppose $H$ is an $n$-vertex $k$-graph with $\delta_{k-1}(H)\ge \alpha n$.
First note that $\delta_{k-1}(H)\geq \alpha n$ implies that $\delta_1(H)\geq \alpha\binom{n-1}{k-1}$. By Proposition~\ref{x-good}, for any $0<\eta<\frac{\alpha^2}{4k!}$ and each $v\in V(H)$, there exists $u\in V(H)\setminus \{v\}$ such that $\{u,v\}$ is $\eta$-good in $H$.
By Lemma~\ref{good-reach.}, there is a constant $\gamma>0$ such that
$v$ is contained in more than $\gamma n^{f-1}$ copies of $F$, since $F$ is a subhypergraph of $K^k_k(m)$.
\end{proof}

\begin{lemma}\label{transferral}
Suppose that $1/n\ll \mu \ll \lambda \ll \varepsilon, \zeta <1$ and $f, n \in \mathbb{N}$ with $f\ge 3$.
Let $F$ be a $3$-partite $3$-graph with $f:=v(F)$. If  $H$ is an $(n,\frac{1}{8}+\varepsilon,\mu,\points)$ $3$-graph and $\mathcal{P}=\{X,Y\}$ is a partition of $V(H)$ with $|X|,|Y|\ge \zeta n$, then $(1,-1)\in L_{\mathcal{P},F}^{\lambda }(H)$.
\end{lemma}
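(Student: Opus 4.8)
I want to show that for a $3$-partite $3$-graph $F$ on $f$ vertices and any bipartition $\mathcal{P}=\{X,Y\}$ of the vertex set of an $(n,\tfrac18+\varepsilon,\mu,\points)$ $3$-graph $H$ with both sides of size at least $\zeta n$, the difference vector $(1,-1)$ lies in $L_{\mathcal{P},F}^{\lambda}(H)$. It suffices to exhibit two $f$-vectors $\mathbf{v},\mathbf{v}'$ that are both $\lambda$-robust $F$-vectors and satisfy $\mathbf{v}-\mathbf{v}'=(1,-1)$; equivalently, I want two ``types'' of copies of $F$ in $H$, each occurring at least $\lambda n^f$ times, whose intersection sizes with $X$ differ by exactly one. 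Since $F$ is $3$-partite with classes $X_1,X_2,X_3$, a natural family of index vectors is obtained by, for a fixed $3$-partite realisation, deciding for each vertex class whether its vertices are placed into $X$ or into $Y$; this already gives candidate $f$-vectors of the form $(a,f-a)$ for various $a$. The goal is to realise two consecutive such values robustly.

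**Main steps.** First I would pass to the regularity setup of Section 5: apply the Regularity Lemma (Theorem~\ref{Reg-lem}) to $H$ with appropriately chosen parameters, obtaining an $(\eta,\delta(t),t)$-equitable family of partitions $\mathcal{P}_{\mathrm{reg}}$ and the associated reduced hypergraph $\mathcal{A}$, which will be $\eta'$-dense for some $\eta'$ depending on $\tfrac18+\varepsilon$. The density $\tfrac18+\varepsilon$ is the crucial numerical input: I expect it forces, in $\mathcal{A}$ (or in a large induced sub-structure reflecting the $X$/$Y$ split of $V(H)$), the existence of a dense regular ``octahedron-like'' or $K_{2,2,2}$-type configuration whose vertex-triple can be split across the two parts $X$ and $Y$ in two different ways — one with an odd number of vertices on the $X$-side and one with an even number — precisely because $\tfrac18 = \tfrac12^3$ is the threshold at which parity obstructions (as in the Lenz--Mubayi construction) disappear. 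Concretely, I would look inside the reduced hypergraph at clusters lying mostly in $X$ and clusters lying mostly in $Y$; since each cluster is large ($\ge \zeta n / t$ after intersecting with $X$ or $Y$, up to lower-order terms), and $\mathcal{A}$ is dense, one finds dense regular polyads supported on any desired pattern of $X$-clusters and $Y$-clusters. Then, using the Counting Lemma (Lemma~\ref{Counting-lem}) together with the fact that $F\subseteq K_3^3(m)$ for $m=\max_i|X_i|$, I would embed $F$ into a regular complex sitting over such a polyad, with the three vertex classes of $F$ mapped into prescribed $X$- or $Y$-clusters; counting labelled partition-respecting copies gives at least $\tfrac12 n^f d_2^{e_2}d_3^{e_3} \ge \lambda n^f$ copies of each chosen type, for suitable $\lambda$. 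Performing this twice, with the class assignment changed on a single vertex class (or, if necessary, by moving the configuration from a pattern with $a$ vertices in $X$ to one with $a+1$), yields two $\lambda$-robust $F$-vectors differing by $(1,-1)$.

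**The obstacle.** The heart of the matter — and the step I would spend the most care on — is guaranteeing that the $\tfrac18+\varepsilon$ density lets me embed $F$ robustly with its vertex classes distributed across $X$ and $Y$ in \emph{two} ways whose $X$-counts have opposite parity (differ by one). For a single embedding this is routine, but to get $(1,-1)$ rather than, say, $(2,-2)$ or $(3,-3)$ in the lattice, I must find a copy of $F$ using an odd number of vertices on one side; the parity is exactly what fails at density $\tfrac18$ by the Lenz--Mubayi example, so the argument must genuinely use $\varepsilon>0$. I would follow the Reiher--R\"odl--Schacht-style embedding technique referenced in the proof ideas (``embedding techniques pioneered by Reiher--R\"odl--Schacht in quasi-random $3$-graphs''): the point is that in an $\eta$-dense reduced hypergraph, once the per-part densities exceed $\tfrac12$ in the relevant ternary-product sense, one can greedily build a dense regular configuration realising \emph{any} prescribed small link pattern, and in particular one can route a single vertex of $F$ to the opposite side while keeping the rest fixed, changing the index vector by exactly $(1,-1)$. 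Making this routing precise — identifying the right dense regular sub-configuration in $\mathcal{A}$ and checking the counting lemma hypotheses for the associated $(s,3)$-complex — is where the real work lies; the bookkeeping with the hierarchy $1/n\ll\mu\ll\lambda\ll\varepsilon,\zeta$ and the parameters $\eta,\delta,\delta_3,r,t$ of the regularity lemma is routine but must be set up so that $\lambda$ can be chosen after $\varepsilon,\zeta$ but before the regularity constants.
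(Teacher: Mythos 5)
Your setup is right — regularity lemma with initial partition $\{X,Y\}$, the reduced hypergraph $\mathcal{A}$, the Counting Lemma for producing $\lambda n^f$ copies — and you correctly identify the central difficulty: without further work, embedding $F$ class-by-class into clusters only produces index vectors that are sums of class sizes $|X_i|$, which for something like $K_{2,2,2}$ never generate $(1,-1)$. But your proposed resolution of this difficulty is asserted rather than established, and it is the one place where $p>1/8$ must genuinely enter. You write that ``once the per-part densities exceed $1/2$ in the relevant ternary-product sense, one can greedily build a dense regular configuration realising any prescribed small link pattern, and in particular one can route a single vertex of $F$ to the opposite side while keeping the rest fixed.'' There is no such greedy statement available at density $>1/8$: the RRS embedding lemma (Lemma~\ref{Rembed} here) works for \emph{any} $\eta$-dense reduced hypergraph, so it cannot be the source of the $1/8$ threshold, and it only embeds $F$ with each of $X_1,X_2,X_3$ inside a single cluster — it does not split a vertex class across two clusters.

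The paper's proof is indirect and uses the $1/8$ threshold in a structurally different way. Assume $(1,-1)\notin L_{\mathcal{P},F}^{\lambda}(H)$; then vectors like $(f-1,1)$ and $(f_1+f_2-1,f_3+1)$ are excluded. Color each vertex $P^{i_1i_2}_a$ of $\mathcal{A}$ red/blue according to whether it lies in an edge of $\mathcal{A}$ whose third cluster index is in the $Y$-part or the $X$-part, respectively (swapping roles when $i_1,i_2$ straddle the cut). The key claim is that the excluded vectors force this coloring to be \emph{proper}: if some $P^{i_1i_2}_a$ were both red and blue, one would have a $4$-cluster configuration with two dense regular polyads sharing the single bipartite graph $P^{i_1i_2}_a$, and the Counting Lemma over that $4$-partite complex produces $\ge\lambda n^f$ copies of $F$ in which one vertex $v^*$ of a class is routed to the cluster on the opposite side — realising an excluded vector. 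So each $P^{i_1i_2}_a$ gets at most one color, giving structural constraints on every constituent $\mathcal{A}^{i_1i_2i_3}$; an averaging argument over the color-class sizes then forces some constituent to have density at most $(1/2)^3=1/8$, contradicting $p^*$-denseness with $p^*>1/8$. The ``routing'' is thus available only inside the contradiction argument, not constructively; you would need to supply something like this coloring-plus-averaging step to make your sketch rigorous.
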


\begin{proof}
Suppose that $F$ is a 3-partite 3-graph with vertex classes $X_1,X_2,X_3$ and $|X_i|=f_i$ for $i\in [3]$. We choose constants satisfying the following hierarchy:
\[
1/n\ll \mu \ll \lambda \ll 1/r, \delta \ll 1/t \le d_2 \ll \eta, \delta_3 \ll  d_3\ll \varepsilon ,\zeta, 1/f,
\]
where $1/d_2\in \mathbb{N}$. Recall that the hypergraph regularity lemma is proved by iterated refinements starting with an arbitrary initial partition. Hence, applying Theorem~\ref{Reg-lem} to $H$ with an initial partition $\{X,Y\}$, we obtain a vertex partition $\mathcal{P}^{(1)}= \{V_1, V_2, \dots, V_{t_1+t_2}\} $ of $V(H)$ where $1/\eta \le t_1+t_2\le t$ and a $2$-edge partition $\mathcal{P}^{(2)}=\cup_{1\le i<j\le t_1+t_2} \mathcal{P}^{ij}$. In particular, $H$ is $(\delta_3,r)$-regular w.r.t. $\mathcal{P}(2)=\{\mathcal{P}^{(1)}, \mathcal{P}^{(2)}\}$. Without loss of generality, we may assume that $X=\bigcup _{i=1}^{t_1}V_i$, $Y=\bigcup _{i=t_1+1}^{t_2}V_i$ and $t!\mid n$ (by discarding up to $t!$ vertices if necessary). 

Let $\mathcal{A}$ be the reduced hypergraph of $H$. Then $V(\mathcal{A})=\cup_{1\le i<j\le t_1+t_2} \mathcal{P}^{ij}$.
We below show that $\mathcal{A}$ is $p^*$-dense for some $p^*>\frac{1}{8}$, i.e. given a triple $\{i,j,k\}\in [t_1+t_2]^3$, 
\[
|E(\mathcal{A}^{ijk})|\geq p^*|\mathcal{P}^{ij}||\mathcal{P}^{ik}||\mathcal{P}^{jk}|
\]
holds for some $p^*>\frac{1}{8}$. As $H$ is an $(n,\frac{1}{8}+\varepsilon,\mu,\points)$ 3-graph, the number of $d_3$-useful edges in $E(V_i,V_j,V_k)$ is at least
\[
(\frac{1}{8}+\varepsilon )|V_i||V_j||V_k|-\mu n^3-2d_3n^3\geq (\frac{1}{8}+\frac{\varepsilon}{2} )|V_i||V_j||V_k|
\]
by our choices of constants and Lemma~\ref{useful-edge}.
On the other hand, by the triangle counting lemma, each triad $P_{abc}^{ijk}$ satisfies
\[
\mathcal{K}_3(P_{abc}^{ijk})\leq (d_2^3+3\delta)|V_i||V_j||V_k|,
\]
so we have
\[
(\frac{1}{8}+\frac{\varepsilon}{2} )|V_i||V_j||V_k|\leq |E(\mathcal{A}^{ijk})|\cdot (d_2^3+3\delta)|V_i||V_j||V_k|.
\]
Hence, by our choices of constants,
$|E(\mathcal{A}^{ijk})|\geq p^*|\mathcal{P}^{ij}||\mathcal{P}^{ik}||\mathcal{P}^{jk}|$ for some $p^*>\frac{1}{8}$ holds.

We begin to show that $(1,-1)\in L_{\mathcal{P},F}^{\lambda }(H)$ by contradiction. Choose arbitrary clusters $V_i,V_j,V_k\subseteq X$. As $H$ is an $(n,\frac{1}{8}+\varepsilon,\mu ,\points)$ 3-graph, we have $e_H(V_i,V_j,V_k)\geq (\frac{1}{8}+\varepsilon)|V_i||V_j||V_k|-\mu n^3$. Therefore, there are at least $(\frac{1}{8}+\varepsilon)|V_i||V_j||V_k|-\mu n^3-2d_3n^3>0$ $d_3$-useful edges in $E_H(V_i,V_j,V_k)$, which implies there is a tripartite graph $P^{ijk}_{abc}$ such that $H$ is $(d,\delta_3,r)$-regular w.r.t. $P^{ijk}_{abc}$ for some $d\geq d_3$. This gives us a $(3,3)$-complex by adding $E(H)\cap \mathcal{K}(P^{ijk}_{abc})$ as the ``3rd level". Here we turn $F$ into a $(3,3)$-complex by making each edge into a complete $i$-graph $K_3^{(i)}$ for $i=1,2$. By Lemma~\ref{Counting-lem}, there are at least $\frac{1}{2}(\frac{n}{t_1+t_2})^fd_3^{e(F)}d_2^{|\partial F|}\geq \lambda n^f$ copies of $F$ in $H[V_i\cup V_j\cup V_k]$. Thus, we have $(f,0)\in L_{\mathcal{P},F}^{\lambda }(H)$. Similarly, we have $(0,f),(f_1+f_2,f_3)\in L_{\mathcal{P},F}^{\lambda }(H)$. We assume that $(1,-1)\notin L_{\mathcal{P},F}^{\lambda }(H)$, so
\begin{gather}
(f-1,1),(1,f-1),(f_1+f_2-1,f_3+1)\notin L_{\mathcal{P},F}^{\lambda }(H). \tag{$\star$} 
\end{gather}

For any vertex class $\mathcal{P}^{i_1i_2}\subseteq V(\mathcal{A})$ and any vertex $P^{i_1i_2}_a\in \mathcal{P}^{i_1i_2}$, we color it as follows.

~\\
\noindent \textbf{Case 1.} If $i_1,i_2\in [t_1]$ or $i_1,i_2\in [t_1+t_2]\setminus[t_1]$, then $P^{i_1i_2}_a$ is colored by {\color{red}red} if there is an edge $\{P^{i_1i_2}_a,P^{i_1i_3}_b,P^{i_2i_3}_c\}\in E(\mathcal{A})$ for some $i_3\in [t_1+t_2]\setminus[t_1]$ while $P^{i_1i_2}_a$ is colored by {\color{blue}blue} if there is an edge $\{P^{i_1i_2}_a,P^{i_1i_3}_b,P^{i_2i_3}_c\}\in E(\mathcal{A})$ for some $i_3\in [t_1]$.

\noindent \textbf{Case 2.} If $i_1\in [t_1]$ and $i_2\in [t_1+t_2]\setminus[t_1]$, then $P^{i_1i_2}_a$ is colored by {\color{red}red} if there is an edge $\{P^{i_1i_2}_a,P^{i_1i_3}_b,P^{i_2i_3}_c\}\in E(\mathcal{A})$ for some $i_3\in [t_1]$ while $P^{i_1i_2}_a$ is colored by {\color{blue}blue} if there is an edge $\{P^{i_1i_2}_a,P^{i_1i_3}_b,P^{i_2i_3}_c\}\in E(\mathcal{A})$ for some $i_3\in [t_1+t_2]\setminus[t_1]$.
~\\

We claim that any vertex $P^{i_1i_2}_a\in V(\mathcal{A})$ can not receive two colors for $1\leq i_1<i_2\leq t_1+t_2$. For $i_1,i_2\in [t_1]$, if not, there exist two indices $i_3\in [t_1]$ and $i'_3\in [t_1+t_2]\setminus[t_1]$ such that both $\{P^{i_1i_2}_a,P^{i_1i_3}_b,P^{i_2i_3}_c\}$ and $\{P^{i_1i_2}_a,P^{i_1i'_3}_b,P^{i_2i'_3}_c\}$ form edges. By Lemma~\ref{Counting-lem}, for any fixed $v^*\in X_3$, there are at least $\frac{1}{2}(\frac{n}{t_1+t_2})^fd_3^{e(F)}d_2^{|\partial F|}\geq \lambda n^f$ copies of $F$ satisfying that $X_1$ is embedded into $V_{i_1}$, $X_2$ is embedded into $V_{i_2}$, $X_3\setminus \{v^*\}$ into $V_{i_3}$ and $\{v^*\}$ into $V_{i'_3}$. Therefore $(f-1,1)\in L_{\mathcal{P},F}^{\lambda }(H)$, which contradicts our assumption in ($\star$).  Similarly, for $i_1,i_2\in [t_1+t_2]\setminus[t_1]$, any vertex $P^{i_1i_2}_a\in V(\mathcal{A})$ can not receive two colors either. For Case 2, if not, there exist two indices $i_3\in [t_1]$ and $i'_3\in [t_1+t_2]\setminus[t_1]$ satisfying both $\{P^{i_1i_2}_a,P^{i_1i_3}_b,P^{i_2i_3}_c\}$ and $\{P^{i_1i_2}_a,P^{i_1i'_3}_b,P^{i_2i'_3}_c\}$ form edges. By Lemma~\ref{Counting-lem}, for any fixed $v^*\in X_2$, there are at least $\frac{1}{2}(\frac{n}{t_1+t_2})^f d_3^{e(F)}d_2^{|\partial F|}\geq \lambda n^f$ copies of $F$ satisfying that $X_1$ is embedded into $V_{i_1}$, $X_3$ is embedded into $V_{i_2}$, $X_2\setminus \{v^*\}$ into $V_{i_3}$ and $\{v^*\}$ into $V_{i'_3}$. Therefore $(f_1+f_2-1,f_3+1)\in L_{\mathcal{P},F}^{\lambda }(H)$, a contradiction.

Given any vertex class $\mathcal{P}^{i_1i_2}\subseteq V(\mathcal{A})$, let $\mathcal{P}^{i_1i_2}_{{\rm {\color{red}red}}}\subseteq \mathcal{P}^{i_1i_2}$ be the set of vertices colored by {\color{red}red} in $\mathcal{P}^{i_1i_2}$ and $\mathcal{P}^{i_1i_2}_{{\rm {\color{blue}blue}}}\subseteq \mathcal{P}^{i_1i_2}$ be the set of vertices colored by {\color{blue}blue} in $\mathcal{P}^{i_1i_2}$. Note that the following holds.

\begin{itemize}

 \item [{\rm (i)}] For any $i_1,i_2\in [t_1]$ and $i_3\in [t_1+t_2]\setminus[t_1]$, $E(\mathcal{A}^{i_1i_2i_3})=E_{\mathcal{A}}(\mathcal{P}^{i_1i_2}_{{\rm {\color{red}red}}},\mathcal{P}^{i_1i_3}_{{\rm {\color{red}red}}},\mathcal{P}^{i_2i_3}_{{\rm {\color{red}red}}})$.

 \item [{\rm (ii)}] For any $i_1\in [t_1]$ and $i_2,i_3\in [t_1+t_2]\setminus[t_1]$, $E(\mathcal{A}^{i_1i_2i_3})=E_{\mathcal{A}}(\mathcal{P}^{i_1i_2}_{{\rm {\color{blue}blue}}},\mathcal{P}^{i_1i_3}_{{\rm {\color{blue}blue}}},\mathcal{P}^{i_2i_3}_{{\rm {\color{blue}blue}}})$.

 \item [{\rm (iii)}] For any $i_1,i_2,i_3\in [t_1]$, $E(\mathcal{A}^{i_1i_2i_3})=E_{\mathcal{A}}(\mathcal{P}^{i_1i_2}_{{\rm {\color{blue}blue}}},\mathcal{P}^{i_1i_3}_{{\rm {\color{blue}blue}}},\mathcal{P}^{i_2i_3}_{{\rm {\color{blue}blue}}})$.

 \item [{\rm (iv)}] For any $i_1,i_2,i_3\in [t_1+t_2]\setminus[t_1]$, $E(\mathcal{A}^{i_1i_2i_3})=E_{\mathcal{A}}(\mathcal{P}^{i_1i_2}_{{\rm {\color{red}red}}},\mathcal{P}^{i_1i_3}_{{\rm {\color{red}red}}},\mathcal{P}^{i_2i_3}_{{\rm {\color{red}red}}})$.

\end{itemize}

First, by {\rm(iii)}, since $\mathcal{A}$ is $p^*$-dense with $p^* >1/8$, there must exist $i'_1,i'_2\in [t_1]$ such that $|\mathcal{P}^{i'_1i'_2}_{{\rm {\color{red}red}}}|<\frac{1}{2}|\mathcal{P}^{i'_1i'_2}|$. By {\rm (i)}, we have that for any $i_3\in [t_1+t_2]\setminus[t_1]$, $|\mathcal{P}^{i'_1i_3}_{{\rm {\color{red}red}}}|>\frac{1}{2}|\mathcal{P}^{i'_1i_3}|$ or $|\mathcal{P}^{i'_2i_3}_{{\rm {\color{red}red}}}|>\frac{1}{2}|\mathcal{P}^{i'_2i_3}|$ as $\mathcal{A}$ is $p^*$-dense. Let $I_1\subseteq [t_1+t_2]\setminus[t_1]$ be the set of indices satisfying the former inequality and $I_2\subseteq [t_1+t_2]\setminus[t_1]$ be the set of indices satisfying the latter. Without loss of generality, we assume $|I_1|\geq \frac{1}{2}t_2$. For any two indices $i_3,i'_3\in I_1$, we have $|\mathcal{P}^{i'_1i_3}_{{\rm {\color{blue}blue}}}|<\frac{1}{2}|\mathcal{P}^{i'_1i_3}|$ and $|\mathcal{P}^{i'_1i'_3}_{{\rm {\color{blue}blue}}}|<\frac{1}{2}|\mathcal{P}^{i'_1i'_3}|$ by the definition of $I_1$. 
Therefore, $|\mathcal{P}^{i_3i'_3}_{{\rm {\color{blue}blue}}}|>\frac{1}{2}|\mathcal{P}^{i_3i'_3}|$ holds by {\rm (ii)} and $p^*$-denseness of $\mathcal{A}$. This implies that for any two indices $i_3,i'_3\in I_1$, $|\mathcal{P}^{i_3i'_3}_{{\rm {\color{red}red}}}|<\frac{1}{2}|\mathcal{P}^{i_3i'_3}|$, which contradicts that $\mathcal{A}$ is $p^*$-dense  by {\rm (iv)}. 
\end{proof}

\begin{proof}[Proof of Theorem~\ref{1/8-dot-codegree}]
Suppose that $1/n\ll \mu\ll \lambda' \ll \zeta,  \varepsilon,\alpha, 1/f <1$ and $f, n \in \mathbb{N}$ with $n\in f\mathbb{N}$.
Let $F$ be a $3$-partite $3$-graph $F$ with $f:=v(F)$. By a result of Erd\H{o}s~\cite{E-1964}, $\pi_{\points}(F)=0$.
Suppose that $H$ is an $(n,\frac{1}{8}+\varepsilon,\mu,\points)$ $3$-graph with $\delta_2(H)\ge \alpha n$.
For each induced subhypergraph $H'$ on $V'\subseteq V(H)$ with $|V'|\ge 2\zeta n$, $H'$ is a $(v(H'), \frac{1}{8}+\varepsilon, \mu', \points)$ $3$-graph with $\mu'=\mu/(2\zeta)^3$. For $H'$ and a partition $\mathcal{P'}=\{X, Y\}$ of $V(H')$ with $|X|, |Y|\ge \zeta n$, we apply Lemma~\ref{transferral} to get $\lambda'$, then the property ($\heartsuit$) in Corollary~\ref{covertrans} holds for $H$.
By Proposition~\ref{cover1}, we can apply Corollary~\ref{covertrans} to find an $F$-factor of $H$. 
\end{proof}

\section{Proof of Theorem~\ref{factor3-2}}\label{3-partite3-pf}
In this section, we shall prove Theorem~\ref{factor3-2} by Corollary~\ref{covertrans} and the regularity method. 
For a $3$-graph $F\in \cover_{3,>}^{\points,2}\cap \trans_{3}^2$, our aim is to prove $F\in \factor_3^{\points,2}$. Similar to the proof of Theorem~\ref{1/8-dot-codegree},
 we only need to prove
 a result (see Lemma~\ref{transferral3-2} ) similar to Lemma~\ref{transferral} and check $\pi_{\points}(F)=0$ for $F\in \cover_{3,>}^{\points,2}$.
 In 2018, Reiher--R\"odl--Schacht~\cite{Reiher2018Hypergraphs} gave a characterization of all 3-graphs $F$ with  $\pi_{\points}(F)=0$. In particular, they constructed an $(n, \frac{1}{27}, \mu, \points)$ 3-graphs $H$ with $\delta_2(H)\ge (\frac{1}{27}-\mu)n$ such that  $H$ does not contain any $F$ with $\pi_{\points}(F)>0$ from random colorings
 of $[n]^2$. By this construction, we have the following proposition. 
 
 \begin{prop}\label{cover-density}
If 3-graph $F\in \cover_{3,>}^{\points,2}$, then $\pi_{\points}(F)=0$.
 \end{prop}

\begin{proof}
Let $F\in \cover_{3,>}^{\points,2}$.  Suppose that $\pi_{\points}(F)>0$.
By the construction of Reiher--R\"odl--Schacht~\cite{Reiher2018Hypergraphs}, there exists an $F$-free  $(n, \frac{1}{27}, \mu, \points)$ 3-graph $H$ with $\delta_2(H)\ge (\frac{1}{27}-\mu)n$, which contradicts $F\in \cover_{3,>}^{\points,2}$.
\end{proof}





\begin{lemma}\label{transferral3-2}
Suppose that $1/n\ll \mu \ll \lambda \ll p,\zeta<1$ and $n\in \mathbb{N}$. Let $F\in \trans_3^2$ and $\pi_{\points}(F)=0$.
If $H$ is an $(n,p,\mu,\points)$
$3$-graph with vertex partition $\mathcal{P}=\{X,Y\}$ and $|X|,|Y|\geq \zeta n$, then $(-1,1)\in L_{\mathcal{P},F}^{\lambda }(H)$.
\end{lemma}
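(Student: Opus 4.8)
The goal is to show that for $F \in \trans_3^2$ with $\pi_{\points}(F)=0$, any $(n,p,\mu,\points)$ $3$-graph $H$ with balanced-in-$\Omega(n)$ bipartition $\{X,Y\}$ satisfies $(-1,1) \in L_{\mathcal P,F}^\lambda(H)$. Since $(-1,1) \in L_{\mathcal P,F}^\lambda(H)$ iff $(1,-1) \in L_{\mathcal P,F}^\lambda(H)$, it is enough to realise a generating set of $L^2_F$ as robust $F$-vectors, and then transfer $(1,-1) \in L^2_F$ through this identification. Recall $L^2_F$ is generated by the vectors $\mathbf{i}_{\mathcal Q,F}$ over all $2$-shadow disjoint bipartitions $\mathcal Q = \{W_1,W_2\}$ of $V(F)$; so I will show that for each such $\mathcal Q$, the vector $(|W_1|,|W_2|)$ (equivalently $\mathbf{i}_{\mathcal Q,F}$ read relative to $(X,Y)$, and its reflection $(|W_2|,|W_1|)$) lies in $I^\lambda_{\mathcal P,F}(H)$. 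Combined, these robust vectors generate a lattice containing $L^2_F \ni (1,-1)$, hence $(1,-1)$, hence $(-1,1)$, is in $L_{\mathcal P,F}^\lambda(H)$.

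\textbf{Key steps.} First, I would invoke the hypergraph regularity lemma (Theorem~\ref{Reg-lem}) on $H$, starting from the initial partition $\{X,Y\}$, to obtain an $(\eta,\delta(t),t)$-equitable family of partitions $\mathcal P^{(1)} = \{V_1,\dots,V_{t_1+t_2}\}$ refining $\{X,Y\}$, together with $\mathcal P^{(2)}$, so that $H$ is $(\delta_3,r(t))$-regular; write $X = \bigcup_{i\le t_1} V_i$, $Y = \bigcup_{i>t_1} V_i$, and pass to the reduced hypergraph $\mathcal A$. Since $H$ is $p$-dense and by Lemma~\ref{useful-edge} at most $2d_3 n^3$ edges are not $d_3$-useful, a triad-counting argument (as in the proof of Lemma~\ref{transferral}) shows $\mathcal A$ is $\eta'$-dense for some $\eta' > 0$ depending only on $p,d_2,d_3$; in particular \emph{every} constituent $\mathcal A^{ijk}$ has an edge. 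Second, fix a $2$-shadow disjoint bipartition $\mathcal Q = \{W_1,W_2\}$ of $V(F)$ with $|W_1| = q_1$, $|W_2| = q_2$, and choose clusters $V_{i_1},\dots,V_{i_{q_1}}$ inside $X$ and $V_{j_1},\dots,V_{j_{q_2}}$ inside $Y$ (these exist since $|X|,|Y| \ge \zeta n$ so $t_1, t_2 \ge \zeta/\eta$ are large enough to accommodate $q_1 \le f$, $q_2 \le f$ clusters — a point to double-check, potentially repeating clusters if they are large, or more carefully distributing the $q_i$ vertex-classes of $F$ among sufficiently many clusters). Using $\eta'$-denseness of $\mathcal A$, pick dense regular polyads supported on these clusters; this yields a $(3,3)$-complex to which the counting lemma (Lemma~\ref{Counting-lem}) applies, turning $F$ into a $3$-complex in the usual way. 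The counting lemma then produces at least $\tfrac12 (n/(t_1+t_2))^f d_3^{e(F)} d_2^{|\partial F|} \ge \lambda n^f$ labelled partition-respecting copies of $F$ with the class $W_1$ mapped into $X$ and $W_2$ into $Y$, so the index vector $(q_1,q_2)$ — and, by swapping the roles, $(q_2,q_1)$ — is a $\lambda$-robust $F$-vector. Third, since the $2$-shadow-disjoint bipartitions $\mathcal Q$ generate $L^2_F$ and $F \in \trans_3^2$ means $(1,-1) \in L^2_F$, the lattice generated by all these robust vectors contains $(1,-1)$ and hence $(-1,1)$, finishing the proof.

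\textbf{Main obstacle.} The real content is the second step: to actually \emph{use} the hypothesis $F \in \trans_3^2$, I must embed copies of $F$ that realise a generating set of $L^2_F$ relative to $(X,Y)$, and the technical heart is verifying that a $2$-shadow disjoint bipartition $\mathcal Q$ of $V(F)$ can be "spread" across the clusters of $\mathcal P^{(1)}$ so that (a) the relevant polyads can all be chosen dense-and-regular simultaneously — here the $2$-shadow disjointness is exactly what prevents incompatible regularity demands on shared pairs of clusters, mirroring the coloring dichotomy in the proof of Lemma~\ref{transferral} — and (b) the counting lemma hypotheses ($s$ large, all needed constituents dense) are met. Unlike Lemma~\ref{transferral}, where $F$ is genuinely $3$-partite so only three clusters are needed at a time, here $F$ is an arbitrary $3$-graph and $\mathcal Q$ an arbitrary $2$-shadow disjoint bipartition, so some care is needed in distributing the (possibly many) vertex classes of $F$ within each of $X$ and $Y$ across enough clusters and selecting a consistent system of dense regular polyads; managing this bookkeeping, rather than any single deep estimate, is where the work lies. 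A secondary subtlety is ensuring $\pi_{\points}(F)=0$ is used (it guarantees, via supersaturation / Lemma~\ref{Counting-lem}, that copies of $F$ appear inside a single dense regular triad at all, and indirectly that the relevant constituents of $\mathcal A$ are nonempty), though in this lemma it is more of a structural hypothesis than an active tool.
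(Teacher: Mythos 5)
Your overall plan matches the paper's: regularize with initial partition $\{X,Y\}$, pass to the reduced hypergraph $\mathcal{A}$, and show that for each $2$-shadow disjoint bipartition $\mathcal{Q}=\{W_1,W_2\}$ of $V(F)$ the vector $(|W_1|,|W_2|)$ is a $\lambda$-robust $F$-vector, after which $(1,-1)\in L^2_F$ pushes through. You also correctly flag that the one hard step is producing a \emph{consistent} system of dense regular polyads across all the clusters used by the embedding. But that step is not, as you suggest, "bookkeeping rather than any single deep estimate," and you have not actually supplied it. For a general $3$-graph $F$ and a general $2$-shadow disjoint $\mathcal{Q}$, the set of pairs $\partial F$ can be complicated, and a naive "pick a dense polyad for each needed triple of clusters" faces a genuine consistency problem: the same pair of clusters may appear in several triads, and the counting lemma requires a \emph{fixed} choice of bipartite cell for each pair of clusters, compatible with every triad in which it occurs. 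Observing that $\mathcal{Q}$ is $2$-shadow disjoint does not by itself resolve this; it only controls how edges of different $\mathcal{Q}$-types share pairs, not how edges of the \emph{same} type interact.

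The paper resolves this with two ingredients your proposal omits. First, Lemma~\ref{alter} (from the authors' earlier work, building on Reiher--R\"odl--Schacht's characterization of $\pi_{\points}(F)=0$) produces an ordering of $V(F)$ compatible with $\mathcal{Q}$ and a $3$-coloring $\varphi:\partial F\to\{{\rm red},{\rm blue},{\rm green}\}$ so that every $e=\{v_i,v_j,v_k\}$ with $i<j<k$ has $\varphi(v_iv_j)={\rm red}$, $\varphi(v_iv_k)={\rm blue}$, $\varphi(v_jv_k)={\rm green}$. This is exactly the combinatorial structure that makes consistent polyad selection possible, and it is where $\pi_{\points}(F)=0$ is used in an essential way — contrary to your remark that the Tur\'an hypothesis is "more of a structural hypothesis than an active tool." Second, Lemmas~\ref{Rembed} and \ref{biembed} (the embedding lemmas for $\eta$-dense reduced hypergraphs, including the bipartite refinement tailored to the $X/Y$ split) produce indices $\lambda(1)<\dots<\lambda(a_i)$ in $[t_1]$ and the remaining ones in $[t_1+t_2]\setminus[t_1]$ together with per-color choices $P^{\lambda(r)\lambda(s)}_{\rm red},P^{\lambda(r)\lambda(s)}_{\rm blue},P^{\lambda(r)\lambda(s)}_{\rm green}$ so that every triple in the right color pattern is an edge of $\mathcal{A}$. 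Feeding the $\varphi$-coloring from Lemma~\ref{alter} into this index/polyad system, and using that $\partial F^{\mathcal{Q}}_0,\dots,\partial F^{\mathcal{Q}}_3$ are pairwise disjoint (this is precisely what $2$-shadow disjointness buys), yields a single $(f,3)$-complex to which Lemma~\ref{Counting-lem} applies, giving the $\lambda n^f$ robust copies. Without Lemma~\ref{alter} and the Ramsey-type embedding Lemmas~\ref{Rembed}/\ref{biembed}, the "distribution and selection" step you deferred does not go through, so as written the proposal has a real gap rather than an easy-to-fill detail.
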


We shall use the following lemmas to prove Lemma~\ref{transferral3-2}.
Recall that for a $k$-graph $F$, $\partial F:=\{S\in [V(F)]^{k-1}: \exists\ e\in E(F), S\subseteq e\}$.


In our previous work~\cite{DHSWZ}, based on the work of Reiher-R\"odl-Schacht~\cite{Reiher2018Hypergraphs}, we show that a $3$-graph $F$ with $\pi_{\points}(F)=0$ admits a structural $3$-coloring of $\partial F$ w.r.t. a 2-shadow disjoint partition $\mathcal{P}$.

\begin{lemma}{\rm (\cite[Lemma 4.2]{DHSWZ}).}
\label{alter} For a $3$-graph $F$ with $\pi_{\points}(F)=0$ and a 2-shadow disjoint partition $\mathcal{P}=\{A,B\}$ of $V(F)$, there is an enumeration $\{v_1,v_2,\dots,v_f\}$ of $V(F)$ with $\{v_1,\cdots,v_{|A|}\}=A$ and $\{v_{|A|+1},\cdots,v_f\}=B$ and there is a $3$-coloring $\varphi:\partial F \longmapsto \{{\color {red}red},~{\color{blue}blue},~{\color{green}green}\}$ such that every hyperedge $\{v_i,v_j,v_k\}\in E(F)$ with $i<j<k$ satisfies: 
\[
\varphi(v_i,v_j)={\color {red}red},\  \ \varphi(v_i,v_k)={\color{blue}blue},\ \ \varphi(v_j,v_k)={\color{green}green}.
\]

\end{lemma}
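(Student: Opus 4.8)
The plan is to derive the lemma from the structural characterisation of $3$-graphs with vanishing $\points$-Tur\'an density established by Reiher, R\"odl and Schacht~\cite{Reiher2018Hypergraphs}: a $3$-graph $F$ has $\pi_{\points}(F)=0$ if and only if $V(F)$ admits \emph{some} enumeration together with a red/blue/green colouring of $\partial F$ obeying the per-edge pattern in the statement (leftmost pair red, outer pair blue, rightmost pair green). Thus the only thing to add is that, when $\mathcal{P}=\{A,B\}$ is $2$-shadow disjoint, the enumeration may be taken to list all of $A$ before all of $B$. I will also use the monotonicity $\pi_{\points}(F')\le \pi_{\points}(F)$ for $F'\subseteq F$, so that the induced subgraphs $F[A]$, $F[B]$ and $F[A\cup\{b\}]$ again have vanishing $\points$-density and hence their own valid enumerations and colourings.

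First I would record the consequences of $2$-shadow disjointness. Split $E(F)$ into the four types $(3,0)$, $(2,1)$, $(1,2)$, $(0,3)$ according to how an edge meets $A$ and $B$; then any two edges of distinct types share at most one vertex. Hence every pair in $\partial F$ lies in edges of a single type: a pair inside $A$ cannot lie simultaneously in a $(3,0)$-edge and a $(2,1)$-edge, and a crossing pair $\{a,b\}$ lies either only in $(2,1)$-edges or only in $(1,2)$-edges. Moreover the configuration of three $(2,1)$-edges $\{a,a',b\},\{a,a'',b\},\{a',a'',b\}$ is impossible, since the $3$-graph on the four vertices $a,a',a'',b$ with these three edges is not orderable and so has positive $\points$-density; consequently, for each $b\in B$ the link $L^A_b:=\{\,\{a,a'\}:\{a,a',b\}\in E(F)\,\}$ is triangle-free, and applying the characterisation to $F[A\cup\{b\}]$ further restricts how $L^A_b$ can interact with $F[A]$ (and dually for $B$).

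Next I would assemble the enumeration as (an enumeration of $A$) followed by (an enumeration of $B$), with the internal orders to be fixed below, and define $\varphi$ type by type. On a pair inside $A$ lying in a $(3,0)$-edge, take the colour from the chosen colouring of $F[A]$; on a pair inside $A$ lying in a $(2,1)$-edge — which by the previous paragraph lies in no $(3,0)$-edge, so no clash with the preceding rule can occur — colour it red, this being forced since its two vertices precede the unique $B$-vertex of every $(2,1)$-edge through it, and all such edges agree. Symmetrically on $B$. Finally, on a crossing pair $\{a,b\}$ lying in $(2,1)$-edges $\{a,a_i,b\}$, set $\varphi(ab)=\text{blue}$ if $a$ precedes every $a_i$ in the $A$-order and $\varphi(ab)=\text{green}$ if $a$ follows every $a_i$, and dually (red or blue, according to the $B$-order) for crossing pairs in $(1,2)$-edges. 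A direct check, edge by edge, using the index pattern $i<j<k$, then confirms that this $\varphi$ witnesses the conclusion.

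The crux — and the only place where $\pi_{\points}(F)=0$ enters beyond producing the subgraph colourings — is making $\varphi$ well-defined on crossing pairs: the rule above forces, for every $b\in B$, each $a\in A$ to be a local extreme of its $L^A_b$-neighbourhood in the $A$-order (and dually), so I must exhibit one enumeration of $A$ realising this simultaneously for all $b\in B$ and compatible with the colouring of $F[A]$, and likewise for $B$. I expect to obtain it by instead starting from a valid enumeration $w_1,\dots,w_f$ of $V(F)$ with colouring $\psi$ supplied by the characterisation applied to $F$ itself, and then \emph{separating}: recolour the mixed edges one at a time so that in each the $A$-vertices precede its $B$-vertex (resp.\ its $A$-vertex precedes its $B$-vertices) and re-linearise accordingly. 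Because $2$-shadow disjointness confines every pair of a mixed edge to edges of that same mixed type, these recolourings stay local and cannot produce a repeated colour within any edge; showing that this process terminates with a valid, $A$-before-$B$ enumeration and colouring is the main work, and I would carry it out by induction on the number of pairs $\{a,b\}$ with $a\in A$, $b\in B$ and $b$ preceding $a$.
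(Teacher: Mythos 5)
This lemma is imported verbatim from~\cite{DHSWZ} and the present paper contains no proof of it, so your proposal can only be judged on its own terms. Your framework is the right one --- invoke the Reiher--R\"odl--Schacht characterisation of $\pi_{\points}(F)=0$, use $2$-shadow disjointness to see that every pair of $\partial F$ lies in edges of a single index type, and then colour type by type --- but the write-up has a genuine gap, and it sits exactly where you yourself flag ``the main work'': you never establish that a single linear order on $A$ (and on $B$) exists making the crossing-pair colours well defined, nor do you carry out the inductive step of the edge-by-edge separation you propose. The assertions that ``these recolourings stay local and cannot produce a repeated colour within any edge'' and that the process terminates are precisely the content of the lemma; announcing an induction on the number of inverted pairs without giving the step that reduces their number while preserving validity leaves the proof incomplete.

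The missing ingredient is a concrete consistency property of the valid enumeration $w_1,\dots,w_f$ of $F$ (with colouring $\psi$) that you start from: for a crossing pair $\{a,b\}$ contained in $(2,1)$-edges $\{a,a_i,b\}$, a short case analysis on $\psi(a,b)\in\{\text{red},\text{blue},\text{green}\}$ shows that all third vertices $a_i$ lie on the same side of $a$ in that enumeration (if $\psi(a,b)$ is red they are all later than both $a$ and $b$; if green, all earlier; if blue, all strictly between $a$ and $b$, hence on one fixed side of $a$), and dually for $(1,2)$-edges. With this in hand no induction is needed: a single \emph{stable} sort --- keep the internal orders of $A$ and of $B$ inherited from $w_1,\dots,w_f$ and place all of $A$ before all of $B$ --- combined with your type-by-type recolouring already works, since $(3,0)$- and $(0,3)$-edges see an unchanged relative order (so retain $\psi$ there), the within-class pairs of mixed edges are forced to a single colour by shadow disjointness, and the same-side property makes the crossing-pair colours well defined and correct in every edge. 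As a side remark, the triangle-freeness of the links $L^A_b$ via $K_4^{(3)-}$-freeness is true but plays no role in the argument and can be dropped.
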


\begin{lemma}\label{Rembed}{\rm (\cite[Lemma 3.1]{Reiher2018Hypergraphs}).}
Given $\eta>0$ and $h\in \mathbb{N}$, there exists an integer $q$ such that the following holds. If $\mathcal{A}$ is an $\eta$-dense reduced hypergraph with index set $[q]$, vertex class $\mathcal{P}^{ij}$ and constituents $\mathcal{A}^{ijk}$, then
\begin{itemize}
  \item [{\rm (i)}] there are indices $\lambda(1)<\cdots<\lambda(h)$ in $[q]$ and
  \item [{\rm (ii)}] for each pair $1\leq r<s\leq h$ there are three vertices $P^{\lambda(r)\lambda(s)}_{{\rm{\color{red} red}}}$, $P^{\lambda(r)\lambda(s)}_{{\rm {\color{blue}blue}}}$ and $P^{\lambda(r)\lambda(s)}_{{\rm {\color{green}green}}}$ in $\mathcal{P}^{\lambda(r)\lambda(s)}$ such that for every triple indices $1\leq r<s<t\leq h$ the three vertices $P^{\lambda(r)\lambda(s)}_{{\rm{\color{red} red}}}$, $P^{\lambda(r)\lambda(t)}_{{\rm {\color{blue}blue}}}$ and $P^{\lambda(s)\lambda(t)}_{{\rm {\color{green}green}}}$ form a hyperedge in $\mathcal{A}^{\lambda(r)\lambda(s)\lambda(t)}$.
\end{itemize}
\end{lemma}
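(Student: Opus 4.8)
The plan is to prove this Ramsey-type statement for reduced hypergraphs by constructing the increasing sequence $\lambda(1)<\cdots<\lambda(h)$ greedily, one index at a time, choosing the red/blue/green vertices as we go and always keeping a huge ``reservoir'' of as-yet-unused indices. The skeleton is this: having committed to $\lambda(1)<\cdots<\lambda(m)$ and to red/blue/green vertices on every pair among them so that all triples inside $\{\lambda(1),\dots,\lambda(m)\}$ already form hyperedges of the relevant constituents, we keep a reservoir $R\subseteq(\lambda(m),q]$ together with the promise that, for each $j\in R$, there are candidate vertices on the pairs $\{\lambda(a),j\}$ ($a\le m$) turning $j$ into a legal value for $\lambda(m+1)$. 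The base cases $h\le 2$, and $h=3$ via a single use of $\eta$-density (which guarantees at least one hyperedge in the unique constituent), are trivial, so everything reduces to executing one extension step while losing only a bounded factor in the density parameters and a bounded — but large — factor in the size of $R$; since $h$ is fixed, the total loss over $h$ steps is still a positive constant, and $q$ only needs to be large in terms of $h$ and $1/\eta$.

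For the extension step I would lean on the $\eta$-density of the constituents through a first-moment (averaging) argument. When we add $\lambda(m+1)$, the only new triples are $\{\lambda(a),\lambda(k),\lambda(m+1)\}$ with $a<k\le m$, and each of their defining conditions couples a red vertex $\rho_{ak}$ of an old pair (already fixed), a blue vertex of the new pair $\{\lambda(a),\lambda(m+1)\}$, and a green vertex of the new pair $\{\lambda(k),\lambda(m+1)\}$. We take the blue/green candidates furnished by the invariant, choose the remaining red vertices on the new pairs at random, and use the density of the constituents $\mathcal{A}^{\lambda(a)\lambda(m+1)j}$ to discard from $R$ those $j$ for which no common completion vertex survives; the first-moment estimate shows a positive fraction of $R$ remains, \emph{provided} the candidate vertices kept in the invariant are not merely legal but robust, meaning that their slices in the constituents stay dense with respect to most reservoir indices. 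This forces the invariant to carry several nested density (``robustness'') conditions, which must then be re-established after each step by a further cleaning of the reservoir.

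The hard part — and the source of the tower-type dependence of $q$ on $h$ — is exactly this bookkeeping, because each pair plays three different roles (the two smallest indices of a triple in the red role, the smallest and largest in the blue role, the two largest in the green role), and the goodness of a candidate vertex on a pair $\{\lambda(a),j\}$ depends on an index $\lambda(m+1)$ that has not yet been chosen, so the invariant must quantify robustness uniformly over all eligible future indices; proving that this robustness is maintainable, rather than degrading to zero, across all $h$ steps is the crux. The delicate point is that ``dense slices'' need not overlap, so a naive first moment may fail to locate the required transversal of mutually compatible blue/green vertices. The clean way around this — and, I expect, close to the route of Reiher--R\"odl--Schacht — is to fold a mild regularization into the construction: inside the relevant constituents apply a weak (hypergraph) regularity lemma and Ramsey-clean the indices so that the surviving bipartite slices become quasirandom, after which a bipartite counting lemma produces the compatible vertices directly and the greedy extension goes through, with the whole argument packaged as one uniform one-step lemma.
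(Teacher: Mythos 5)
This lemma is not proved in the paper at all: it is imported verbatim as Lemma~3.1 of Reiher--R\"odl--Schacht \cite{Reiher2018Hypergraphs}, so there is no internal argument to compare yours against. Judged on its own terms, your proposal is a plan rather than a proof, and the gap sits exactly where you locate it: the extension step. When you append a new largest index $\lambda(m+1)$, you must choose, for every $a\le m$, a blue and a green vertex in $\mathcal{P}^{\lambda(a)\lambda(m+1)}$ so that \emph{all} $\binom{m}{2}$ constraints $\{P^{\lambda(r)\lambda(s)}_{\mathrm{red}},\,\mathrm{blue}_r,\,\mathrm{green}_s\}\in\mathcal{A}^{\lambda(r)\lambda(s)\lambda(m+1)}$ hold simultaneously. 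Even if every fixed red vertex has a link of density $\eta'$ in the relevant constituent, this only tells you that for each single constraint a $\eta'$-fraction of pairs $(\mathrm{blue}_r,\mathrm{green}_s)$ works; the sets of ``good'' blue vertices for $\mathrm{blue}_r$ coming from different constraints (different $s$) each have relative size about $\eta'$ and may be pairwise disjoint when $\eta'<1/m$. So a first-moment argument over independent random choices gives an expected number of violated constraints of order $(1-\eta')\binom{m}{2}$, not zero, and no single compatible transversal is guaranteed by density alone. You acknowledge this, but the proposed repair --- regularizing the individual constituents and invoking a bipartite counting lemma --- does not address it: the obstruction is not the internal structure of any one constituent $\mathcal{A}^{\lambda(r)\lambda(s)\lambda(m+1)}$ but the consistency of a single vertex choice across the many constituents that share the vertex class $\mathcal{P}^{\lambda(a)\lambda(m+1)}$, and regularizing each constituent separately does nothing to align those slices.

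A second, related gap is the maintainability of your invariant. The red vertex placed on a new pair $\{\lambda(a),\lambda(m+1)\}$ is only used in triples whose third index is a \emph{future} $\lambda(m')$, so ``robustness'' must be quantified over most of the reservoir; but which reservoir elements remain admissible depends in turn on the blue/green systems chosen for them, which depend on the reds --- the conditions are circular, and you never exhibit an invariant strong enough to break the circle (e.g.\ large candidate \emph{sets} per pair together with a positive-density guarantee on compatible tuples, rather than mere existence of single legal vertices). Note also that the vertex classes $\mathcal{P}^{ij}$ have unbounded size while $q$ may depend only on $\eta$ and $h$, so any ``Ramsey-cleaning'' must color with boundedly many colors (say, density thresholds attained), not with the chosen vertices themselves; your sketch does not engage with this constraint. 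The known proof avoids the greedy scheme altogether in favor of a global selection of the colored vertices via averaging followed by Ramsey-type cleaning of the index set, which is precisely what sidesteps the transversal problem above; as written, your route would need a substantially stronger one-step lemma than the one you assume.
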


\begin{lemma}{\rm (\cite[Lemma 4.8]{DHSWZ}).}
\label{biembed}
Given $\eta>0$ and $h_1,h_2\in \mathbb{N}$, there exist integers $q_1$ and $q_2$ such that the following holds. If $\mathcal{A}$ is an $\eta$-dense reduced hypergraph with index set $[q_1+q_2]$, vertex class $\mathcal{P}^{ij}$ and constituents $\mathcal{A}^{ijk}$, then there are indices $\lambda(1)<\cdots<\lambda(h_1)$ in $[q_1]$ and $\sigma(1)<\cdots<\sigma(h_2)$ in $[q_1+q_2]\setminus [q_1]$ satisfying that
\begin{itemize}
  \item [{\rm (i)}] for each pair $1\leq r<s\leq h_1$ there is a vertex $P^{\lambda(r)\lambda(s)}_{{\rm {\color{red}red}}}$;
  \item [{\rm (ii)}] for any $1\leq r\leq h_1$ and any $1\leq t\leq h_2$ there are two vertices $P^{\lambda(r)\sigma(t)}_{{\rm {\color{blue}blue}}}$ and $P^{\lambda(r)\sigma(t)}_{{\rm {\color{green}green}}}$;
  \item [{\rm (iii)}] for every triple indices $1\leq r<s\leq h_1$ and $1\leq t\leq h_2$ the three vertices $P^{\lambda(r)\lambda(s)}_{{\rm {\color{red}red}}}$, $P^{\lambda(r)\sigma(t)}_{{\rm {\color{blue}blue}}}$ and $P^{\lambda(s)\sigma(t)}_{{\rm {\color{green}green}}}$ form a hyperedge in $\mathcal{A}^{\lambda(r)\lambda(s)\sigma(t)}$.
\end{itemize}
\end{lemma}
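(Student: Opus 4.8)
The natural first attempt is to decouple the two blocks. Condition (i) only asks for the \emph{existence} of the vertices $P^{\lambda(r)\lambda(s)}_{\mathrm{red}}$ and imposes no configuration \emph{inside} the $\lambda$-block, and the $\sigma$-block carries no internal requirement either, so one would pick $\lambda(1)<\cdots<\lambda(h_1)$ essentially freely, then choose a red vertex $R_{rs}\in\mathcal P^{\lambda(r)\lambda(s)}$ for each $r<s$, then pick the $\sigma(t)$'s and the blue/green cross-vertices. The only genuine constraint is (iii): $\{R_{rs},P^{\lambda(r)\sigma(t)}_{\mathrm{blue}},P^{\lambda(s)\sigma(t)}_{\mathrm{green}}\}\in E(\mathcal A^{\lambda(r)\lambda(s)\sigma(t)})$ for all $r<s$ and all $t$. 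Since one may take $P^{\lambda(r)\sigma(t)}_{\mathrm{blue}}=P^{\lambda(r)\sigma(t)}_{\mathrm{green}}$ (splitting them, if the statement is read as requiring distinctness, costs nothing), for each fixed $\sigma(t)$ this amounts to choosing one vertex $w^t_r\in\mathcal P^{\lambda(r)\sigma(t)}$ per $r$ so that $(w^t_r,w^t_s)$ lies in the link graph $\{(x,y):\{R_{rs},x,y\}\in E(\mathcal A^{\lambda(r)\lambda(s)\sigma(t)})\}$ for every $r<s$.

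To make the decoupled plan run one fixes suitable large constants $q_1,q_2$ (depending on $h_1,h_2,\eta$), takes $\lambda(r):=r$, and processes the $\binom{h_1}{2}$ pairs $(r,s)$ one at a time while maintaining a live set $K$ of candidate second-block indices, initially all of $[q_1+q_2]\setminus[q_1]$. Since $\mathcal A^{\lambda(r)\lambda(s)k}$ is $\eta$-dense, a one-line double count shows that for each $k\in K$ at least an $\eta/2$-fraction of the vertices of $\mathcal P^{\lambda(r)\lambda(s)}$ have link density $\ge\eta/2$ in $\mathcal A^{\lambda(r)\lambda(s)k}$; averaging over $(v,k)\in\mathcal P^{\lambda(r)\lambda(s)}\times K$ produces one vertex $R_{rs}$ with this property for an $\eta/2$-fraction of $k\in K$, and one replaces $K$ by that sub-family. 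After all pairs are handled $|K|\ge(\eta/2)^{\binom{h_1}{2}}q_2\ge h_2$, so one picks $\sigma(1)<\cdots<\sigma(h_2)$ in $K$, and then every link graph $\{(x,y):\{R_{rs},x,y\}\in E(\mathcal A^{\lambda(r)\lambda(s)\sigma(t)})\}$ has density $\ge\eta/2$.

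The main obstacle appears at the final step: for a fixed $\sigma(t)$ one still has to find $w^t_1,\dots,w^t_{h_1}$ forming an ordered transversal with respect to these link graphs, and density alone does not guarantee one. Already for $h_1=4$ the ``staircase'' pattern of constraints ($w^t_r$ adjacent to $w^t_s$ whenever $r<s$) contains a $4$-cycle, and a $4$-cycle need not have a transversal copy in a blow-up by merely dense (rather than quasirandom) bipartite graphs; one can exhibit a small counterexample. So decoupling fails, and the actual proof must build the $\lambda$-block and the cross-vertices together. The plan is then to follow the iterated-pigeonhole (``dependent choice on the index set'') scheme of Reiher--R\"odl--Schacht's proof of Lemma~\ref{Rembed} — one cannot reduce to Lemma~\ref{Rembed} as a black box, since in an $\eta$-dense reduced hypergraph every constituent is dense and so that lemma gives no control over which indices it returns and cannot place the first $h_1$ of them inside $[q_1]$. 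Instead: fix $\sigma(1)<\cdots<\sigma(h_2)$ first (no internal structure needed), and then add $\lambda(1)<\cdots<\lambda(h_1)$ from a large reservoir inside $[q_1]$ one index at a time; at the step introducing $\lambda(r)$, use the $\eta$-density of the relevant constituents $\mathcal A^{\lambda(r')\,k\,\sigma(t)}$ and pigeonhole over the not-yet-used reservoir to commit the new red vertices $R_{r'r}$ ($r'<r$) together with the blue/green cross-vertices on the pairs touching $\lambda(r)$, in such a way that a positive fraction of reservoir indices survives as candidates for $\lambda(r+1),\dots,\lambda(h_1)$. Choosing $q_1$ (hence the reservoir) as a sufficiently large constant — a tower in $1/\eta$ of height $O(h_1)$ — guarantees the process never stalls and outputs exactly (i)--(iii). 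A minor secondary point, automatic in the intended application where $\mathcal A$ arises from the hypergraph regularity lemma, is that the vertex classes of $\mathcal A$ must be assumed large enough for these pigeonhole steps.
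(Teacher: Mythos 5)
First, a structural point: this paper does not prove Lemma~\ref{biembed} at all — it is imported verbatim from \cite[Lemma 4.8]{DHSWZ} — so there is no in-paper proof to measure your attempt against; any comparison would have to be with the argument in that reference. Judged on its own terms, your write-up is a diagnosis plus a plan rather than a proof. The diagnosis is sound and worth having: you correctly observe that choosing the $\lambda$'s, the red vertices, the $\sigma$'s and the cross-vertices independently reduces condition (iii), for each fixed $t$, to finding a transversal in the dense link graphs of the $R_{rs}$, and that density alone does not supply one (in fact the obstruction already appears at $h_1=3$, where one needs a transversal triangle in three dense bipartite graphs, not only at $h_1=4$). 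You are also right that Lemma~\ref{Rembed} cannot be invoked as a black box here, since it gives no control over which of its output indices land in $[q_1]$.

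The gap is in your final paragraph. ``Introduce $\lambda(r)$, pigeonhole over the reservoir, and keep a positive fraction of surviving indices'' is precisely the step that carries the entire content of the lemma, and you never say what it means for a reservoir index to survive nor why a positive fraction does. The difficulty is that when you commit $P^{\lambda(r)\sigma(t)}_{\mathrm{blue}}$ you are constraining every future triple $\{P^{\lambda(r)\lambda(s)}_{\mathrm{red}},P^{\lambda(r)\sigma(t)}_{\mathrm{blue}},P^{\lambda(s)\sigma(t)}_{\mathrm{green}}\}$ with $s>r$, whose red and green vertices are not yet chosen; and the single green vertex on $(\lambda(s),\sigma(t))$, once chosen, must work simultaneously with the already-committed pairs $(R_{r's},P^{\lambda(r')\sigma(t)}_{\mathrm{blue}})$ for \emph{every} $r'<s$ — which is exactly the simultaneous-transversal problem you identified as the reason decoupling fails, now reappearing inside the iteration. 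A correct argument has to resolve this, e.g.\ by carrying large candidate \emph{sets} of vertices on each pair (rather than single committed vertices) and shrinking them as each new index is added, in the style of the Reiher--R\"odl--Schacht proof of Lemma~\ref{Rembed}; you gesture at that scheme but do not carry out the bookkeeping or verify that the shrinking factors can be controlled by $\eta$ and $h_1,h_2$ alone. Until that step is made precise, the proposal does not establish the lemma.
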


\noindent{\textbf{Remark.}{\rm (\cite{DHSWZ}).}} In the following proof we also need another statement of Lemma~ \ref{biembed} as follows.

\emph{
Given $\eta>0$ and $h_1,h_2\in \mathbb{N}$, there exist integers $q_1$ and $q_2$ such that the following holds. If $\mathcal{A}$ is an $\eta$-dense reduced hypergraph with index set $[q_1+q_2]$, vertex class $\mathcal{P}^{ij}$ and constituents $\mathcal{A}^{ijk}$, then there are indices $\lambda(1)<\cdots<\lambda(h_1)$ in $[q_1]$ and $\sigma(1)<\cdots<\sigma(h_2)$ in $[q_1+q_2]\setminus [q_1]$ satisfying that
\begin{itemize}
  \item [{\rm (i)}] for each pair $1\leq s<t\leq h_2$ there is a vertex $P^{\sigma (s)\sigma (t)}_{{\rm {\color{green}green}}}$;
  \item [{\rm (ii)}] for any $1\leq r\leq h_1$ and any $1\leq t\leq h_2$ there are two vertices $P^{\lambda(r)\sigma(t)}_{{\rm {\color{red}red}}}$ and $P^{\lambda(r)\sigma(t)}_{{\rm {\color{blue}blue}}}$;
  \item [{\rm (iii)}] for every triple indices $1\leq r\leq h_1$ and $1\leq s<t\leq h_2$ the three vertices $P^{\lambda(r)\sigma (s)}_{{\rm {\color{red}red}}}$, $P^{\lambda(r)\sigma(t)}_{{\rm {\color{blue}blue}}}$ and $P^{\sigma (s)\sigma(t)}_{{\rm {\color{green}green}}}$ form a hyperedge in $\mathcal{A}^{\lambda(r)\sigma (s)\sigma(t)}$.
\end{itemize}
}

\begin{proof}[Proof of Lemma~\ref{transferral3-2}]
Given  $F\in \trans_3^2$ with $\pi_{\points}(F)=0$ and $f:=v(F)$, let $\mathcal{F}_1=\{A_1,B_1\},\mathcal{F}_2=\{A_2,B_2\},\cdots,\mathcal{F}_m=\{A_{m},B_m\}$ be all 2-shadow disjoint bipartions of $V(F)$ with $|A_i|=a_i$ and $|B_i|=b_i$ for $i\in[m]$.  Choose constants satisfying the following hierarchy:
\[
1/n\ll \lambda,\mu \ll 1/r,\delta\ll d_2, \delta_3,1/t \ll 1/M_1 \ll 1/M_2 \ll 1/m_2\ll 1/m_1, d_3\ll p,\zeta,1/f
\]
where $d_2\geq 1/t$ and $1/d_2\in \mathbb{N}$. 
As in the proof of Lemma~\ref{transferral}, 
we apply Theorem~\ref{Reg-lem} to $H$ with the initial partition $\{X,Y\}$ and obtain a vertex partition $\mathcal{P}^{(1)}= \{V_1, V_2, \dots, V_{t_1+t_2}\} $ of $V(H)$ with $1/\eta \le t_1+t_2\le t$ and a $2$-edge partition $\mathcal{P}^{(2)}=\cup_{1\le i<j\le t_1+t_2} \mathcal{P}^{ij}$. In addition, $H$ is $(\delta_3,r)$-regular w.r.t. $\mathcal{P}(2)=\{\mathcal{P}^{(1)}, \mathcal{P}^{(2)}\}$. Without loss of generality, we may assume that $X=\bigcup _{i=1}^{t_1}V_i$, $Y=\bigcup _{i=t_1+1}^{t_2}V_i$ and $t!\mid n$ (by discarding up to $t!$ vertices if necessary).
Let $\mathcal{A}$ be the reduced hypergraph of $H$. Then $V(\mathcal{A})=\cup_{1\le i<j\le t_1+t_2} \mathcal{P}^{ij}$.
Similar to the proof of Lemma~\ref{transferral},  $\mathcal{A}$ is $p^*$-dense for some $p^*>0$.

 For any $\mathcal{F}_i=\{A_i,B_i\}$ where $i\in [m]$, we first prove that $(a_i,b_i)\in I_{\mathcal{P},F}^{\lambda}(H)$. For any integer $0\leq j\leq 3$, let $F^{\mathcal{F}_i}_j\subseteq F$ be the 3-graph consisting of all 3-edges $e$ with $|e\cap A_i|=j$. An easy observation is that $\partial F^{\mathcal{F}_i}_0$, $\partial F^{\mathcal{F}_i}_1$, $\partial F^{\mathcal{F}_i}_2$, $\partial F^{\mathcal{F}_i}_3$ are pairwise disjoint as $\mathcal{F}_i$ is a 2-shadow disjoint bipartition.
In order to use Lemma~\ref{Counting-lem}, we need to find a regular $f$-partite graph. 
We first apply Lemma~\ref{Rembed} with index set $[t_1]$ to find an index subset $\mathcal{I}_1$ of $[t_1]$ with $|\mathcal{I}_1|=M_1$ such that Property {\rm{(ii)}} from Lemma~\ref{Rembed} is satisfied. Similarly, we apply Lemma~\ref{Rembed} with index set $[t_1+t_2]\setminus [t_1]$ to find an index subset $\mathcal{I}_2$ of cardinality $M_2$ in $[t_1+t_2]\setminus [t_1]$. Next we apply Lemma \ref{biembed} with $h_1:=m_1$, $h_2:=m_2$ and the set $\mathcal{I}_1\cup\mathcal{I}_2$ to find a subset $\mathcal{J}_1$ of $\mathcal{I}_1$ with cardinality $m_1$ and a subset $\mathcal{J}_2$ of $\mathcal{I}_2$ with cardinality $m_2$ such that Properties {\rm(i)}--{\rm(iii)} from Lemma \ref{biembed} are satisfied.
Finally, we apply the Remark of Lemma \ref{biembed} with $h_1:=a_i$, $h_2:=b_i$ and the index set $\mathcal{J}_1\cup\mathcal{J}_2$ to find a subset $\mathcal{X}$ of $\mathcal{J}_1$ of cardinality $a_i$ and a subset $\mathcal{Y}$ of $\mathcal{J}_2$ of cardinality $b_i$ satisfying Properties {\rm(i)}--{\rm(iii)} from Lemma \ref{biembed}. Without loss of generality, we may assume that $\mathcal{X}=\{\lambda(1),\dots,\lambda(a_i)\}$ and $\mathcal{Y}=\{\lambda(a_i+1),\dots,\lambda(f)\}$ with $\lambda(1)<\dots<\lambda(f)$.

By Lemma~\ref{alter}, there is an enumeration of $V(F)$ and a $3$-coloring w.r.t. $\mathcal{F}_i$ satisfying the property in Lemma~\ref{alter}. For any $\{v_i,v_j,v_k\}\in \bigcup _{j=0}^3F^{\mathcal{F}_i}_j$, by our choices of $\mathcal{X}$ and $\mathcal{Y}$, we have $d(H|P^{\lambda(i)\lambda(j)\lambda(k)}_{{\rm {\color{red}red},{\color{blue}blue},{\color{green}green}}})\geq d_3$. Since $\partial F^{\mathcal{F}_i}_0$, $\partial F^{\mathcal{F}_i}_1$, $\partial F^{\mathcal{F}_i}_2$, $\partial F^{\mathcal{F}_i}_3$ are pairwise disjoint, by Lemma~\ref{Counting-lem}, there are at least $\frac{1}{2}(\frac{n}{t_1+t_2})^kd_3^{e(F)}d_2^{|\partial F|}\geq \lambda n^k$ copies of $F$ satisfying that $v_i$ is embedded into $V_{\lambda (i)}$ for $1\leq i\leq f$. Therefore $(a_i,b_i)\in I_{\mathcal{P},F}^{\lambda }(H)$.

As $F\in \trans_3^2$, i.e. $(1,-1)\in L_F^2$, we have $(1,-1)\in L_{\mathcal{P},F}^{\lambda }(H)$.
\end{proof}

\begin{proof}[Proof of Theorem~\ref{factor3-2}]
Consider $F\in \cover_{3, >}^{\points,2}\cap \trans^2_3$ with $f:=v(F)$. We choose constants satisfying the following hierarchy:
\[
1/n \ll\mu \ll \lambda' \ll \zeta\ll  p,\alpha,1/f<1
\]
with $n\in f\mathbb{N}$. 
Let $H$ be an $(n,p,\mu,\points)$ $3$-graph with $\delta_2(H)\ge \alpha n$. 
For each induced subhypergraph $H'$ on $V'\subseteq V(H)$ with $|V'|\ge 2\zeta n$, $H'$ is a $(v(H'), p, \mu', \points)$ $3$-graph with $\mu'=\mu/(2\zeta)^3$. Note that  we have $\pi_{\points}(F)=0$ since $F\in \cover_{3, >}^{\points,2}$. For $H'$ and a partition $\mathcal{P'}=\{X, Y\}$ of $V(H')$ with $|X|, |Y|\ge \zeta n$, we apply Lemma~\ref{transferral3-2} to get $\lambda'$, then the property ($\heartsuit$) in Corollary~\ref{covertrans} holds for $H$.
By Corollary~\ref{covertrans}, we can find an $F$-factor of $H$.
\end{proof}

\begin{proof}[Proof of Theorem~\ref{3-partite3-graph}]

Let $F$ be a $3$-partite $3$-graph with  $f$ vertices. By Theorem~\ref{1/8-dot-codegree}, we have $p_F\le 1/8$. Suppose that $F\in \trans_3^2$.
We apply Proposition~\ref{cover1} with $k=3$ and obtain that  $F\in \cover_{3,>}^{\points ,2}$. By Theorem~\ref{factor3-2}, we have $F\in\factor_3^{\points,2}$ which implies that $p_F=0$.

Next, let $F\notin \trans_3^2$. Then it suffices to show that $p_F\ge 1/8$.
Assume by contradiction that $p_F< 1/8$ which implies that 
for every $0<\alpha <1$, there exists an $n_0$ and $\mu>0$ such that  every $(n, 1/8,\mu ,\points)$ 3-graph $H$ with $\delta_2(H)\geq \alpha n$, $n\ge n_0$ and $n\in f\mathbb{N}$,  satisfies that $H$ has an $F$-factor. 
However, we will prove $F \in \trans^{2}_3$ by the following construction, which contradicts  $F\notin \trans_3^2$.
For $n\in f\mathbb{N}$, define a probability distribution $H(n)$ on 3-graphs of order $n$ as follows.
Let $K$ be the complete graph on $n$ vertices, and $\mathcal{P}=\{X, Y\}$ be a partition of $V(K)$ satisfying $\textbf{i} _{\mathcal{P}}(V(K))=(n_1,n_2)$ with $n_1,n_2\ge n/3$.
Define $\phi:E(K) \longmapsto \{red, blue\}$ as a random $2$-coloring with each color associated to an edge with probability $\frac{1}{2}$ independently.
Let now the vertex set of $H(n)$ be $V(K)$ and include a set $e\in [V(K)]^{3}$ in $E(H(n))$ as follows. If  $|e\cap X|$ is even, then make $e\in E(H(n))$ if $K[e]$
  is a  red triangle in $K$.
 If  $|e\cap X|$ is odd, then make $e\in E(H(n))$ if $K[e]$
 is a  blue triangle in $K$.
By concentration inequalities (e.g. the Janson inequality) and the union bound, for sufficiently large $n$,
there exists  $H\in H(n)$ such that $H$ is an $(n, 1/8, \mu ,\points)$  3-graph with $\delta _2(H)=\Omega (n)$. Then $H$ has an $F$-factor.
Moreover, $\mathcal{P}$ is a $2$-shadow disjoint bipartition of $V(H)$ since for any two edges $e, e'\in E(H)$ with 
$\textbf{i} _{\mathcal{P},H}(e)\neq \textbf{i} _{\mathcal{P}, H}(e')$,
either $K[e]$ and $K[e']$ have different colors, or $|e\cap e'|<2$, 
each of which implies that $|e\cap e'|<2$.
For every copy $F'$ of $F$ in $H$, as $F'$ is a subgraph of $H$, the bipartition $\mathcal{P}'$ of $V(F')$ inherited from the bipartition $\mathcal{P}$ is also $2$-shadow disjoint, implying that $\textbf{i}_{\mathcal P}(V(F'))=\textbf{i}_{\mathcal P', F'}\in L_F^2$.
Summing up the $\textbf{i}_{\mathcal P}(V(F'))$ over all $F'\in \mathcal F$ gives that  $(n_1,n_2)\in L^2_F$.
Analogously we can define a new 3-graph $H'$ satisfying $(n,1/8,\mu ,\points)$-denseness and $\delta _2(H')=\Omega (n)$,
which has a $2$-shadow disjoint bipartition $\mathcal{P}^*$ with $\textbf{i} _{\mathcal{P^*}, H'}=(n_1-1,n_2+1)$.
The same argument shows that $(n_1-1,n_2+1)\in L^2_F$.
Therefore we obtain $(1,-1)\in L^2_F$, that is, $F\in \trans^2_3$.
\end{proof}

\section{The proof of Theorem~\ref{gcd}}\label{k-partitek-pf}

In this section, we shall prove Theorem~\ref{gcd}. Similar to the proof of Theorem~\ref{1/8-dot-codegree}, we only need to prove a lemma similar to Lemma~\ref{transferral}.
First, we need the following proposition from~\cite{Han2017Minimum}.

\begin{prop}{\rm{(\cite[\rm Proposition 3.1]{Han2017Minimum})}.}
\label{supersaturation}
Given integers $k,r_0,a_1,\dots ,a_k\in \mathbb{N}$, suppose that $1/n\ll \lambda \ll \eta ,1/k,1/r_0,1/a_1,\dots ,1/a_k$. Let $H$ be a $k$-graph on $n$ vertices with a vertex partition $V_1\cup \cdots \cup V_r$ where $r\leq r_0$. Suppose $i_1,\dots ,i_k\in [r]$ and $H$ contains at least $\eta n^k$ edges $e=\{v_1,\dots ,v_k\}$ such that $v_1\in V_{i_1},\dots ,v_k\in V_{i_k}$. Then $H$ contains at least $\lambda n^{a_1+\cdots +a_k}$ copies of $K_{a_1,\dots ,a_k}$ whose jth part is contained in $V_{i_j}$ for all $j\in[k]$.
\end{prop}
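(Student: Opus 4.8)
The plan is to prove Proposition~\ref{supersaturation} by the classical supersaturation (``blow-up counting'') method, which is insensitive to the partition. Write $N_j:=|V_{i_j}|$ for $j\in[k]$, where the indices $i_1,\dots,i_k$ may repeat. First I would observe that the number of ordered tuples $(v_1,\dots,v_k)$ with $v_j\in V_{i_j}$ for all $j$ and $\{v_1,\dots,v_k\}\in E(H)$ is at least $\eta n^k$ (each of the $\geq\eta n^k$ edges promised by the hypothesis is counted at least once) and at most $\prod_{j=1}^k N_j\le n^k$; since each $N_j\le n$, this forces $N_j\ge\eta n$ for every $j$, and the ``transversal density'' $p_0:=\eta n^k/\prod_{j}N_j$ of $H$ inside the prescribed pattern satisfies $p_0\ge\eta$.

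Next I would count the tuples $(x^j_\ell)_{j\in[k],\,\ell\in[a_j]}$ with $x^j_\ell\in V_{i_j}$ for which every transversal $k$-set $\{x^1_{\ell_1},\dots,x^k_{\ell_k}\}$, taken over all $(\ell_1,\dots,\ell_k)\in[a_1]\times\cdots\times[a_k]$, is an edge of $H$; this is the number of labelled copies of the ``blow-up'' $K_{a_1,\dots,a_k}$ sitting inside the pattern, with coincidences among the $x^j_\ell$ still allowed. Peeling off one class $V_{i_j}$ at a time and, at each step, applying Jensen's inequality to the convex map $x\mapsto x^{a_j}$ on the relevant codegrees --- this is exactly the standard Erd\H{o}s supersaturation computation --- I expect this count to be at least $c\,n^{a_1+\cdots+a_k}$ for a constant $c=c(\eta,a_1,\dots,a_k)>0$ depending only on $\eta$ and the $a_j$ (one explicit admissible value being $c=\eta^{a_1a_2\cdots a_k}$; equivalently, this is the fact that the blow-up of a single edge is a Sidorenko hypergraph, which also follows from iterated H\"older). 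The point is that $c$ is independent of $n$, of the partition, and of $r$.

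It then remains to pass from tuples to genuine copies. The \emph{degenerate} tuples counted above --- those with $x^j_\ell=x^{j'}_{\ell'}$ for some $(j,\ell)\neq(j',\ell')$, which can only happen when $i_j=i_{j'}$ --- number at most $\binom{a_1+\cdots+a_k}{2}\,n^{a_1+\cdots+a_k-1}$, so since $1/n\ll\lambda$ they form fewer than half of the total, leaving at least $\tfrac12 c\,n^{a_1+\cdots+a_k}$ non-degenerate tuples. Each non-degenerate tuple determines an honest copy of $K_{a_1,\dots,a_k}$ in $H$ whose $j$-th part is $\{x^j_1,\dots,x^j_{a_j}\}\subseteq V_{i_j}$ (distinctness within a part gives the correct part sizes, and distinctness across any two parts lying in the same class gives disjointness), and each such copy arises from exactly $\prod_{j}a_j!$ tuples. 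Hence $H$ contains at least $\tfrac{1}{2\prod_j a_j!}\,c\,n^{a_1+\cdots+a_k}\ge\lambda\,n^{a_1+\cdots+a_k}$ copies once $\lambda$ is taken at most $c/(2\prod_j a_j!)$, which is permitted by the hierarchy $1/n\ll\lambda\ll\eta,1/k,1/r_0,1/a_1,\dots,1/a_k$.

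The one step with real content is the blow-up supersaturation bound in the second paragraph; everything else is bookkeeping. The two things to be careful with there are keeping the powers of $\eta$ and of $n$ balanced through the iteration so that the resulting constant depends on nothing beyond $\eta$ and $a_1,\dots,a_k$ (in particular not on $r_0$), and handling the possibility that some of $i_1,\dots,i_k$ coincide --- but, as indicated above, this only creates lower-order degenerate configurations and causes no genuine difficulty.
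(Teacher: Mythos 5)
The paper does not prove this proposition; it is imported verbatim as \cite[Proposition 3.1]{Han2017Minimum}, so there is no in-paper argument to compare against. Your blind proof is a correct, self-contained derivation along the standard lines: observe that each $|V_{i_j}|\ge\eta n$ and the ordered-edge density within the pattern is at least $\eta$, then run the iterated Jensen/H\"older (blow-up supersaturation) computation to get $\gtrsim_{\eta,\mathbf a} n^{a_1+\cdots+a_k}$ labelled homomorphic copies of $K_{a_1,\dots,a_k}$ landing in the prescribed pattern, and finally remove the $O(n^{a_1+\cdots+a_k-1})$ degenerate tuples and divide by $\prod_j a_j!$ to pass to genuine copies, with the hierarchy $1/n\ll\lambda$ absorbing both lower-order corrections. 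The one small slip is your suggested explicit constant $c=\eta^{a_1\cdots a_k}$: since the parts may be as small as $\eta n$, the supersaturation step gives $p^{a_1\cdots a_k}\prod_j|V_{i_j}|^{a_j}\ge \eta^{\,a_1\cdots a_k+a_1+\cdots+a_k}\,n^{a_1+\cdots+a_k}$, so the correct exponent of $\eta$ is larger; but, as you say, all that matters is that the constant depends only on $\eta$ and the $a_j$, so this does not affect the proof.
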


\begin{lemma}\label{k-transferral}
Suppose that $1/n \ll \mu \ll \lambda \ll p,\zeta<1$ and $n\in \mathbb{N}$. Let $F$ be a $k$-partite $k$-graph with $\gcd(\mathcal S(F))=1$ and $f:=v(F)$.
If $H$ is an $(n,p,\mu,\points)$ $k$-graph with vertex partition $\mathcal{P}=\{X,Y\}$ of $V(H)$ and $|X|,|Y|\ge \zeta n$, then $(1,-1)\in L_{\mathcal{P},F}^{\lambda }(H)$.
\end{lemma}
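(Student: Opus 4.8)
The plan is to follow the same strategy as in the proof of Lemma~\ref{transferral}, but now exploiting the hypothesis $\gcd(\mathcal{S}(F))=1$ in place of the density threshold $1/8$. First I would fix a $k$-partite realisation of $F$ with vertex classes $X_1,\dots,X_k$ of sizes $f_1,\dots,f_k$. Applying the hypergraph regularity lemma (Theorem~\ref{Reg-lem}) to $H$ with the initial partition $\{X,Y\}$, I obtain a vertex partition $\mathcal{P}^{(1)}=\{V_1,\dots,V_{t_1+t_2}\}$ refining $\{X,Y\}$, with $X=\bigcup_{i=1}^{t_1}V_i$ and $Y=\bigcup_{i=t_1+1}^{t_1+t_2}V_i$, together with a $2$-edge partition $\mathcal{P}^{(2)}$ so that $H$ is $(\delta_3,r)$-regular w.r.t.\ the resulting family of partitions. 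Since $H$ is $(p,\mu,\points)$-dense, the reduced hypergraph $\mathcal{A}$ is $p^*$-dense for some $p^*>0$: for any triple of clusters $V_i,V_j,V_k$ the number of $d_3$-useful edges in $E_H(V_i,V_j,V_k)$ is at least $(p-\mu-2d_3)|V_i||V_j||V_k|>0$ (using Lemma~\ref{useful-edge}), and each triad carries at most $(d_2^3+3\delta)|V_i||V_j||V_k|$ triangles, so a positive fraction of triads in each constituent of $\mathcal{A}$ is dense and regular.

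Next I would use the positivity of $p^*$ together with Proposition~\ref{supersaturation} to extract robust $F$-vectors. Concretely, for any choice of clusters $V_{i_1},\dots,V_{i_k}$ (from either side of the partition), the $p^*$-denseness of $\mathcal{A}$ and the counting lemma (Lemma~\ref{Counting-lem}), or more directly Proposition~\ref{supersaturation} applied with $a_j=f_j$, yield at least $\lambda n^f$ copies of $F$ with $X_j$ embedded into $V_{i_j}$. By choosing $\ell$ of the clusters $V_{i_j}$ inside $X$ and $k-\ell$ inside $Y$ (for each $0\le\ell\le k$), and then for each cluster independently deciding how to split a single vertex class across the $X$/$Y$ boundary using a $\points$-dense count on a bipartite-in-clusters configuration, I would show that for every vector $(s, f-s)$ with $s$ expressible as a nonnegative integer combination $\sum c_j f_j$ with $0\le c_j\le$ (number of $k$-partite realisations allowing size $f_j$)—in particular for $s$ equal to any element of $\mathcal{S}(F)$ and more generally sums thereof—the vector $(s,f-s)\in I^{\lambda}_{\mathcal{P},F}(H)$. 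Ranging over all $k$-partite realisations of $F$, every $d\in\mathcal{S}(F)$ arises as some $|X_j|$, so $(d, f-d)$ and $(f-d, d)$ both lie in $I^{\lambda}_{\mathcal{P},F}(H)$, hence $(d,-d)$ and $(-d,d)$ lie in the generated lattice $L^{\lambda}_{\mathcal{P},F}(H)$ for every $d\in\mathcal{S}(F)$.

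Finally, since $\gcd(\mathcal{S}(F))=1$, there are integers $\mu_d$ with $\sum_{d\in\mathcal{S}(F)}\mu_d\, d=1$; taking the corresponding integer combination of the vectors $(d,-d)\in L^{\lambda}_{\mathcal{P},F}(H)$ gives $(1,-1)\in L^{\lambda}_{\mathcal{P},F}(H)$, as required. The main obstacle I anticipate is the second step: verifying rigorously that shifting a single unit of mass across the $X$/$Y$ boundary—i.e.\ realising the vector $(f_j, f-f_j)$ on the ``correct'' side, and more importantly realising vectors where exactly one class straddles the cut so as to produce $(d, f-d)$ for each $d\in\mathcal{S}(F)$—can be done robustly. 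This requires a careful bookkeeping of which clusters lie in $X$ versus $Y$ and an embedding of $F$ (via the counting lemma on an $f$-partite regular complex, as in Lemma~\ref{transferral}) that respects a prescribed assignment of vertex classes to clusters; the $\points$-denseness and the supersaturation proposition handle the counting, but one must ensure a dense-regular triad is available for every relevant triple of the $f$ clusters chosen, which is where the $p^*$-denseness of $\mathcal{A}$ and an application of Lemma~\ref{Rembed}/Lemma~\ref{biembed}-type extraction (as in the proof of Lemma~\ref{transferral3-2}) may be needed if a naive choice of clusters does not suffice.
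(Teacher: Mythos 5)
Your plan is far heavier than what is required, and the ``second step'' you flag as the main obstacle is indeed a real gap because it is based on a misconception. The paper's proof does not use the regularity lemma, the reduced hypergraph, the counting lemma, or the Ramsey-type extraction lemmas (Lemma~\ref{Rembed}, Lemma~\ref{biembed}) at all. It relies only on Proposition~\ref{supersaturation}, applied directly to the sets $X$ and $Y$: since $H$ is $(p,\mu,\points)$-dense and $|X|,|Y|\ge\zeta n$, the quantity $e(X,\dots,X)$ (and, replacing any one argument, $e(Y,X,\dots,X)$) is $\Omega(n^k)$, so Proposition~\ref{supersaturation} gives at least $\lambda n^f$ copies of each $k$-partite realisation $K_i$ of $F$ with a \emph{prescribed assignment of each vertex class $V_{ij}$ entirely into $X$ or entirely into $Y$}. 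No splitting of a vertex class across the $X/Y$ boundary is ever performed, nor is it needed. Concretely, placing all classes in $X$ gives $(f,0)\in L^\lambda_{\mathcal{P},F}(H)$, and placing the single class $V_{il}$ in $Y$ and the rest in $X$ gives $(\sum_{j\neq l}a_{ij}, a_{il})\in L^\lambda_{\mathcal{P},F}(H)$; subtracting yields $(a_{il},-a_{il})\in L^\lambda_{\mathcal{P},F}(H)$ for every realisation $K_i$ and every part $l$. Then $\gcd(\mathcal{S}(F))=1$ gives integers $m_{il}$ with $\sum m_{il}a_{il}=1$, so $(1,-1)=\sum m_{il}(a_{il},-a_{il})$.

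Two further points. First, your claim that $(d,f-d)$ and $(f-d,d)$ together put $(d,-d)$ in the generated lattice is not correct as stated: their difference is $(2d-f,f-2d)$, not $(d,-d)$; the paper obtains $(d,-d)$ by subtracting from $(f,0)$. Second, even if you insisted on the regularity route, you would need to justify why a dense regular triad exists simultaneously for every relevant triple among the $f$ clusters you pick; the paper sidesteps this entirely by never working at the level of regular triads for this lemma. In short, the clean mechanism is ``whole vertex classes on either side + supersaturation + lattice differences,'' and the apparatus of Lemma~\ref{transferral} and Lemma~\ref{transferral3-2} is not needed here.
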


\begin{proof}
Suppose that $F$ is a $k$-partite $k$-graph of order $f$ satisfying $\gcd(\mathcal S(F))=1$.
Let $K_1,\dots, K_t$ be all $k$-partite realisations of $F$ with $t\ge 1$. For any $i\in [t]$, let $V_{i1}, \dots, V_{ik}$ denote the vertex classes of  $K_i$ with $|V_{ij}|= a_{ij}$
for $j\in  [k]$. Choose constants satisfying the following hierarchy:
\[
1/n\ll \mu \ll \lambda  \ll \eta \ll p,\zeta,1/k,1/f.
\]
As $H$ is $(p,\mu,\points)$-dense, we have $e(X,\dots ,X)\geq p|X|^k-\mu n^k\geq p(\zeta n)^k-\mu n^k$. Then there are at least $\frac{1}{k!}e(X,\dots ,X)\geq \eta n^k$ edges in $H[X]$. By Proposition~\ref{supersaturation}, there are at least $\lambda n^f$ copies of $K_i$ in $H[X]$ for each $i\in [t]$, which implies that $(f,0)\in L_{\mathcal{P},K_i}^{\lambda }(H)$. Since $e(Y,X,\dots ,X)\geq p|Y||X|^{k-1}-\mu n^k\geq p(\zeta n)^k-\mu n^k$, by Proposition~\ref{supersaturation} there are at least $\lambda n^f$ copies of $K_i$ where $V_{i1}$ is contained in $Y$ and $V_{ij}$ is contained in $X$ for $j\in [k]\setminus \{1\}$. This implies that $(\sum_{j\in [k]\setminus \{1\}}a_{ij}, a_{i1})\in L_{\mathcal{P},K_i}^{\lambda }(H)$. Similarly, $(\sum_{j\in [k]\setminus \{2\}}a_{ij}, a_{i2}),\dots ,(\sum_{j\in [k]\setminus \{k\}}a_{ij}, a_{ik})\in L_{\mathcal{P},K_i}^{\lambda }(H)$. Since $K_i$ is a $k$-partite realisation of $F$, $L_{\mathcal{P},K_i}^{\lambda }(H)=L_{\mathcal{P},F}^{\lambda }(H)$ for all $i\in [t]$. Therefore, $(a_{ij},-a_{ij})\in L_{\mathcal{P},F}^{\lambda }(H)$ for all $i\in [t]$ and $j\in [k]$. As $\gcd(\mathcal S(F))=1$, there exist integers $m_{ij}$ for $i\in [t]$ and $j\in [k]$ such that $\sum_{i,j}m_{ij}a_{ij}=1$.
Hence $(1,-1)=\sum_{i,j}m_{ij}(a_{ij},-a_{ij})\in L_{\mathcal{P},F}^{\lambda }(H)$.
\end{proof}

\begin{proof}[Proof of Theorem~\ref{gcd}]
Suppose that $1/n\ll \mu\ll \lambda' \ll \zeta, p,\alpha, 1/f <1$ and $f, n \in \mathbb{N}$ with $n\in f\mathbb{N}$.
Let $F$ be a $k$-partite $k$-graph $F$ satisfying $\gcd(\mathcal S(F))=1$ and $f:=v(F)$. By the result of Erd\H{o}s~\cite{E-1964}, $\pi_{\points}(F)=0$.
Suppose that $H$ is an $(n,p,\mu,\points)$ $k$-graph with $\delta_{k-1}(H)\ge \alpha n$.
For each induced subhypergraph $H'$ on $V'\subseteq V(H)$ with $|V'|\ge 2\zeta n$, $H'$ is a $(v(H'), p, \mu', \points)$ $k$-graph with $\mu'=\mu/(2\zeta)^k$. For $H'$ and a partition $\mathcal{P'}=\{X, Y\}$ of $V(H')$ with $|X|, |Y|\ge \zeta n$, we apply Lemma~\ref{k-transferral} to get $\lambda'$, then the property ($\heartsuit$) in Corollary~\ref{covertrans} holds for $H$.
By Proposition~\ref{cover1}, we can apply Corollary~\ref{covertrans} to find an $F$-factor of $H$.

In particular, if $F$ is a complete $k$-partite $k$-graph satisfying $\gcd(\mathcal S(F))>1$, then we claim that  $F\notin \trans^{k-1}_k$.
This together with Observation~\ref{construction-trans} would imply $F\notin \factor_k^{\points, k-1}$, which in turn implies the moreover part of the theorem.
Indeed, let $U_1, \dots, U_k$ be the vertex classes of $F$ and $\mathcal{P}=\{V_1, V_2\}$ be a $(k-1)$-shadow disjoint bipartition of $V(F)$. For any $i\in [k]$ and any $x,y \in U_i$ we may choose vertices $u_j\in U_j$ for each $j\neq i$, and since $F$ is complete both $\{x\}\cup \{u_j : j\neq i\}$ and $\{y\}\cup \{u_j : j\neq i\}$ are edges of $F$.
Both these sets therefore have the same index vector with respect to  $\mathcal{P}$, so we must have $x, y\in V_1$ or $x, y\in V_2$. It follows that $U_i \subseteq V_1$ or $U_i \subseteq V_2$ for each $i\in [k]$, and therefore that $\gcd(\mathcal S(F))$ divides $|V_t|$ for $t\in [2]$. Since $L^{k-1}_{F}$ is the lattice generated by all $\textbf{i} _{\mathcal{P},F}$ such that $\mathcal{P}$ is $(k-1)$-shadow disjoint,  $\gcd(\mathcal S(F))$ divides $a$ and $b$ for each  $(a,b)\in L^{k-1}_{F}$. Due to $\gcd(\mathcal S(F))>1$, we conclude that  $(1,-1)\notin L^{k-1}_{F}$, and so $F\notin \trans^{k-1}_k$.
\end{proof}

\section{Concluding Remarks}\label{remark}

In this paper, we studied the $F$-factor problems in quasi-random $k$-graphs.
We summarize our contributions as follows.
\begin{enumerate}[label=(\arabic*)]
\item For a 3-partite 3-graph $F$, we show that $p_F=0$ if $F\in \trans_3^{2}$, and $p_F=1/8$ otherwise (Theorem~\ref{3-partite3-graph}), which also implies that $1/8$ is the density threshold for ensuring all 3-partite 3-graphs $F$-factors  in $\points$-quasi-random 3-graphs given a  minimum codegree condition $\Omega(n)$.
We also show that $p$-dense $\points$-quasi-random 3-graphs plus a minimum vertex degree is not enough to ensure it contains  $F$-factors for all 3-partite 3-graphs $F$, even when $p$ is arbitrarily close to 1 and $F=K_{2,2,2}$ (Theorem~\ref{construction}).


\item For $k\geq 3$, using $\factor^{\edge,1}_k=\cover^{\edge,1}_k$ (Theorem~\ref{factor-cover2}), we show that all $k$-partite $k$-graphs are in $\factor^{\edge,1}_k$ (Theorem~\ref{dot-edge-partite}).

\item For $k=3$,
 we show that $\cover_{3,>}^{\points ,2}\cap \trans^2_3\subseteq \factor_3^{\points ,2} $ (Theorem~\ref{factor3-2}).
By Observation~\ref{construction-trans} and Theorem~\ref{3-partite3-graph}, we obtain that for each 3-partite 3-graph $F$,  $F\in \factor_3^{\points ,2}$ if and only if  $F\in \trans^2_3$. 

\item For $k\ge 3$, we find that $k$-partite $k$-graphs $F$ with $\gcd(\mathcal{S}(F))=1$ belong to $\factor_k^{\points , k-1}$ (Theorem~\ref{gcd}).
Moreover, the condition $\gcd(\mathcal{S}(F))=1$ is necessary for complete $k$-partite $k$-graphs $F$.

\end{enumerate}

Below we include some further problems and results for the following stronger quasi-randomness for $k$-graphs.

\begin{defi}
Given integers $n\geq k\ge 2$, let real numbers $p\in [0,1]$, $\mu >0$ be given and let $H=(V,E)$ be a $k$-graph with $n$ vertices. We say that $H$ is ($p,\mu,\cherry$)\emph{-dense} if for any two families of ordered $(k-1)$-tuples $P,Q\subseteq V^{k-1}$,
\begin{equation}
e_{\cherry}(P,Q)\geq p|\mathcal{K}_{\cherry}(P,Q)|-\mu n^k,
\end{equation}
where $\mathcal{K}_{\cherry}(P,Q)=\{(x_1,x_2,\dots,x_{k-1},x_k)\in V^k :(x_1,\dots,x_{k-1})\in P \text{\ and } (x_2,\dots,,x_k)\in Q \}$, and
$e_{\cherry}(P,Q)$ is the size of the set $\{(x_1,x_2,\dots,x_{k-1},x_k)\in \mathcal{K}_{\cherry}(P,Q): \{x_1,x_2,\dots,x_{k-1},x_k\}\in E \} $.
\end{defi}

Inspired by Problem~\ref{prob}, we raise a more general problem.

\begin{prob}\label{prob2}
For $k\ge 3$, $1\le s\le k-1$ and $\mathscr{S}\in \{\points, \edge, \cherry\}$, which $k$-graphs $F$ belong to $\factor_k^{\mathscr{S} ,s}$?
\end{prob}

Clearly, $\mathcal{C}(n,p,\mu, \cherry)\subseteq \mathcal{C}(n,p,\mu, \edge)\subseteq \mathcal{C}(n,p,\mu, \points)$ holds for all $p\in [0,1]$ and $\mu >0$. Similar to Theorem~\ref{factor-cover1} and Theorem~\ref{factor-cover2}, we have the following result.

\begin{thm}\label{cherry-equal}\label{general-thm}
For any $k \geq 2$ and $\mathscr{S}\in \{\points, \edge, \cherry\}$, $\factor^{\mathscr{S},1}_k=\cover^{\mathscr{S},1}_{k}$.
\end{thm}

In fact, the proof of Theorem~\ref{general-thm} uses a key property: each $k$-graph $F$ with $F\in \cover^{\mathscr{S},1}_{k}$ 
 also belongs to $\cover^{\mathscr{S},1}_{k,>}$.  
However, for $2\le s\le k-1$, we do not know whether this property still holds.
In addition, the ranges of $\factor^{\mathscr{S},s}_k$ and $\cover^{\mathscr{S},s}_{k}$ are larger than that of $\factor^{\mathscr{S},1}_k$ and $\cover^{\mathscr{S},1}_{k}$ respectively,
 which also makes it more difficult to characterize $\factor^{\mathscr{S},s}_k$ completely, even for $\factor^{\points,2}_3$. 
By Theorem~\ref{factor3-2}, if each 3-graph $F$ with $F\in \cover^{\points,2}_{k}$ 
satisfies $F\in \cover^{\points,2}_{k,>}$, then we have $\factor_3^{\points ,2} =\cover_{3}^{\points ,2}\cap \trans^2_3$. Therefore,  we raise the following problem.

\begin{prob}\label{prob3}
For $1<s<k$, is it true that $ \cover^{\points,s}_{k}= \cover^{\points,s}_{k,>}$?
\end{prob}

At last we include a result on $\cherry$-quasi-randomness, which is an application of Theorem~\ref{cherry-equal}.
Let $K_4^{(3)-}$ be the (unique) $3$-graph consisting of three hyperedges on four vertices. 
It is known that
\[
\pi_{\points}(K_4^{(3)-})=\pi_{\edge}(K_4^{(3)-})=1/4 \text{~~\cite{Glebov_2015, Reiher_2018}  \ and \ } \pi_{\cherry}(K_4^{(3)-})=0.
\]
Therefore, $K_4^{(3)-}\notin \factor_3^{\edge,1}$. 
By using Theorem~\ref{cherry-equal},  we show the following result.

\begin{thm}\label{K-4-3}
$K_4^{(3)-}\in \factor_3^{\cherry,1}$.
\end{thm}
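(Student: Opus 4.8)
The plan is to reduce to the cover problem via Theorem~\ref{cherry-equal} and then exploit $\cherry$-quasi-randomness inside the link of a single vertex. By Theorem~\ref{cherry-equal} it suffices to prove that $K_4^{(3)-}\in\cover_3^{\cherry,1}$; that is, given $0<p,\alpha<1$ we must produce $n_0\in\mathbb{N}$ and $\mu>0$ such that whenever $H$ is an $(n,p,\mu,\cherry)$ $3$-graph with $\delta_1(H)\ge\alpha n^2$ and $n\ge n_0$, every vertex of $H$ lies in a copy of $K_4^{(3)-}$. Fix such an $H$, a vertex $v\in V(H)$, and write $n:=|V(H)|$.

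First I would record the trivial observation that any four \emph{distinct} vertices of a $3$-graph spanning at least three edges contain a copy of $K_4^{(3)-}$. Hence it is enough to find distinct $x_1,x_2,x_3\in V(H)\setminus\{v\}$ with $\{v,x_1,x_2\},\{v,x_2,x_3\},\{x_1,x_2,x_3\}\in E(H)$. Writing $L_v$ for the link graph of $v$ on $V(H)\setminus\{v\}$ (so $\{x,y\}\in E(L_v)$ iff $\{v,x,y\}\in E(H)$, whence $e(L_v)=\deg_H(v)\ge\alpha n^2$), this asks for a path $x_1x_2x_3$ of length two in $L_v$ whose endpoints together with its midpoint $x_2$ form an edge of $H$.

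Now set $P=Q:=\{(x,y)\in V(H)^2:\{v,x,y\}\in E(H)\}$. Since $\{v,x,y\}$ has size three, every pair $(x,y)\in P$ has $x\ne y$ and $x,y\ne v$, and
\[
\mathcal{K}_{\cherry}(P,Q)=\{(x_1,x_2,x_3)\in V(H)^3: x_1,x_3\in N_{L_v}(x_2)\},
\]
so $|\mathcal{K}_{\cherry}(P,Q)|=\sum_{w\in V(H)\setminus\{v\}}\deg_{L_v}(w)^2\ge\tfrac1n\big(2e(L_v)\big)^2\ge 4\alpha^2n^3$ by Cauchy--Schwarz. Choosing $\mu\le 2\alpha^2p$ and $n_0$ large, $\cherry$-density yields
\[
e_{\cherry}(P,Q)\ \ge\ p\,|\mathcal{K}_{\cherry}(P,Q)|-\mu n^3\ \ge\ 2\alpha^2p\,n^3 .
\]
By definition $e_{\cherry}(P,Q)$ counts the triples $(x_1,x_2,x_3)$ with $x_1,x_3\in N_{L_v}(x_2)$ and $\{x_1,x_2,x_3\}\in E(H)$. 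At most $n^2$ of the triples in $\mathcal{K}_{\cherry}(P,Q)$ have $x_1=x_3$, so for $n\ge n_0$ there is one counted by $e_{\cherry}(P,Q)$ with $x_1\ne x_3$; for it we automatically have $x_1,x_3\ne x_2$ (as $L_v$ has no loops) and $x_1,x_2,x_3\ne v$. Thus $v,x_1,x_2,x_3$ are four distinct vertices spanning the edges $\{v,x_1,x_2\},\{v,x_2,x_3\},\{x_1,x_2,x_3\}$, hence a copy of $K_4^{(3)-}$ through $v$. As $v$ was arbitrary, $K_4^{(3)-}\in\cover_3^{\cherry,1}$, and Theorem~\ref{cherry-equal} finishes the proof.

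The heart of the matter is the choice $P=Q$: it is tailored so that $\cherry$-density closes a two-edge path in the link $L_v$ into an edge of $H$, supplying the third edge of a $K_4^{(3)-}$ through $v$. This is exactly where we use that $\cherry$-quasi-randomness is strictly stronger than $\points$- and $\edge$-quasi-randomness — under the weaker notions a low-density quasi-random $3$-graph may have essentially bipartite links and hence no such closed path, consistent with $K_4^{(3)-}\notin\factor_3^{\edge,1}$ (recall $\pi_{\points}(K_4^{(3)-})=\pi_{\edge}(K_4^{(3)-})=1/4$ while $\pi_{\cherry}(K_4^{(3)-})=0$). Beyond identifying this, the argument is a short counting computation and presents no real obstacle.
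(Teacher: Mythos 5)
Your proof is correct and follows essentially the same route as the paper: reduce to the cover problem via Theorem~\ref{cherry-equal}, fix a vertex, set $P=Q$ to be (the ordered version of) its link, lower-bound $|\mathcal{K}_{\cherry}(P,Q)|$ by a convexity argument, and invoke $\cherry$-density to close a path of length two in the link into a hyperedge. Your version is in fact a bit more careful than the paper's at two points: you work explicitly with ordered pairs (as the definition of $(p,\mu,\cherry)$-density requires $P,Q\subseteq V^{k-1}$, whereas the paper writes $P=Q=N(w)$, a set of unordered pairs), and you explicitly discard the degenerate triples with $x_1=x_3$ before concluding.
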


\begin{proof}
Suppose that $n_0$ is large enough and $\mu$ is sufficiently small. By Theorem~\ref{cherry-equal}, it suffices to show that $K_4^{(3)-}\in \cover^{\cherry,1}_{3}$. To see this, let $H$ be an $n$-vertex $(p,\mu, \cherry)$-dense $3$-graph with $\delta_1(H)\ge \alpha n^2$, $n\in 4\mathbb{N}$ and $n\geq n_0$.
Fixed $w\in V(H)$,
construct an auxiliary graph $G$ with $V(G)=V(H)\setminus \{w\}$ and $E(G)=\big\{\{v_1,v_2\}:\{v_1,v_2\}\in N(w)\big\}$. We shall count pairs $(v, T)$ where $T$ is a pair of vertices such that both are adjacent to $v$ in $G$. By Jensen's inequality, the number of such pairs is
\[
\sum_{v\in V(G)}\binom{d_G(v)}{2}\ge (n-1)\binom {\frac{1}{n-1}\sum_{v\in V(G)}d_G(v)}{2}\ge (n-1)\binom {2\alpha n}{2}\ge \alpha n^3.
\]
Let $P=Q=N(w)$. Since $H$ is ($p,\mu,\cherry$)\emph{-dense}, we have
\[
e_{\cherry}(P,Q)\geq p|\mathcal{K}_{\cherry}(P,Q)|-\mu n^3\ge (2\alpha p-\mu) n^3>0,
\]
which implies there are two pairs $\{x, y\}, \{x, z\}\in N(w)$ such that $\{x,y,z\}\in E(H)$. Therefore, $\{w,x,y\}$, $\{w,x,z\}$ and $\{x,y,z\}$ form a copy of $K_4^{(3)-}$ in $H$.
\end{proof}

\bibliographystyle{abbrv}
\bibliography{ref,ref1}

\begin{thebibliography}{10}

\bibitem{quasihyper}
E.~Aigner-Horev, D.~Conlon, H.~H\`an, Y.~Person, and M.~Schacht.
\newblock Quasirandomness in hypergraphs.
\newblock {\em Electron. J. Combin.}, 25(3):Paper No. 3.34, 22, 2018.

\bibitem{Chung10}
F.~Chung.
\newblock Quasi-random hypergraphs revisited.
\newblock {\em Random Structures Algorithms}, 40(1):39--48, 2012.

\bibitem{Ch90}
F.~R.~K. Chung.
\newblock Quasi-random classes of hypergraphs.
\newblock {\em Random Structures Algorithms}, 1(4):363--382, 1990.

\bibitem{Ch91}
F.~R.~K. Chung.
\newblock Regularity lemmas for hypergraphs and quasi-randomness.
\newblock {\em Random Structures Algorithms}, 2(2):241--252, 1991.

\bibitem{chung1990quasi}
F.~R.~K. Chung and R.~L. Graham.
\newblock Quasi-random hypergraphs.
\newblock {\em Random Structures Algorithms}, 1(1):105--124, 1990.

\bibitem{Chung-quasi-random}
F.~R.~K. Chung, R.~L. Graham, and R.~M. Wilson.
\newblock Quasi-random graphs.
\newblock {\em Combinatorica}, 9(4):345--362, 1989.

\bibitem{wquasi}
D.~Conlon, H.~H\`an, Y.~Person, and M.~Schacht.
\newblock Weak quasi-randomness for uniform hypergraphs.
\newblock {\em Random Structures Algorithms}, 40(1):1--38, 2012.

\bibitem{CFKO}
O.~Cooley, N.~Fountoulakis, D.~K\"{u}hn, and D.~Osthus.
\newblock Embeddings and {R}amsey numbers of sparse {$k$}-uniform hypergraphs.
\newblock {\em Combinatorica}, 29(3):263--297, 2009.

\bibitem{Corr}
K.~Corradi and A.~Hajnal.
\newblock On the maximal number of independent circuits in a graph.
\newblock {\em Acta Math. Acad. Sci. Hungar.}, 14:423--439, 1963.

\bibitem{DHSWZ}
L.~Ding, J.~Han, S.~Sun, G.~Wang, and W.~Zhou.
\newblock F-factors in quasi-random hypergraphs.
\newblock {\em J. London Math. Soc.}, (2):1--34, 2022.

\bibitem{E-1964}
P.~Erd\H{o}s.
\newblock On extremal problems of graphs and generalized graphs.
\newblock {\em Israel J. Math.}, 2:183--190, 1964.

\bibitem{ErdosHajnal}
P.~Erd\H{o}s and A.~Hajnal.
\newblock On {R}amsey like theorems. {P}roblems and results.
\newblock In {\em Combinatorics ({P}roc. {C}onf. {C}ombinatorial {M}ath.,
  {M}ath. {I}nst., {O}xford, 1972)}, pages 123--140. 1972.

\bibitem{E-S1982}
P.~Erd\H{o}s and V.~T. S\'{o}s.
\newblock On {R}amsey-{T}ur\'{a}n type theorems for hypergraphs.
\newblock {\em Combinatorica}, 2(3):289--295, 1982.

\bibitem{Glebov_2015}
R.~Glebov, D.~Kr\'{a}l', and J.~Volec.
\newblock A problem of {E}rd{\H{o}}s and {S}\'{o}s on 3-graphs.
\newblock {\em Israel J. Math.}, 211(1):349--366, 2016.

\bibitem{glock-design-2016}
S.~Glock, D.~K{\"u}hn, A.~Lo, and D.~Osthus.
\newblock The existence of designs via iterative absorption: hypergraph $ f
  $-designs for arbitrary $ f$.
\newblock {\em arXiv:1611.06827}, 2016.

\bibitem{G06}
W.~T. Gowers.
\newblock Quasirandomness, counting and regularity for 3-uniform hypergraphs.
\newblock {\em Combin. Probab. Comput.}, 15(1-2):143--184, 2006.

\bibitem{G07}
W.~T. Gowers.
\newblock Hypergraph regularity and the multidimensional {S}zemer\'{e}di
  theorem.
\newblock {\em Ann. of Math. (2)}, 166(3):897--946, 2007.

\bibitem{zbMATH03344609}
A.~Hajnal and E.~Szemer\'{e}di.
\newblock Proof of a conjecture of {P}. {E}rd{\H{o}}s.
\newblock {\em Combinatorial theory and its applications, {II} ({P}roc.
  {C}olloq., {B}alatonf\"{u}red, 1969)}, pages 601--623, 1970.

\bibitem{Han2017Perfect}
J.~Han.
\newblock Decision problem for perfect matchings in dense {$k$}-uniform
  hypergraphs.
\newblock {\em Trans. Amer. Math. Soc.}, 369(7):5197--5218, 2017.

\bibitem{PMtilings}
J.~Han.
\newblock On perfect matchings and tilings in uniform hypergraphs.
\newblock {\em SIAM J. Discrete Math.}, 32(2):919--932, 2018.

\bibitem{Han_2020}
J.~Han and A.~Treglown.
\newblock The complexity of perfect matchings and packings in dense
  hypergraphs.
\newblock {\em J. Combin. Theory Ser. B}, 141:72--104, 2020.

\bibitem{Han2017Minimum}
J.~Han, C.~Zang, and Y.~Zhao.
\newblock Minimum vertex degree thresholds for tiling complete 3-partite
  3-graphs.
\newblock {\em J. Combin. Theory Ser. A}, 149:115--147, 2017.

\bibitem{Johansson2008Factors}
A.~Johansson, J.~Kahn, and V.~Vu.
\newblock Factors in random graphs.
\newblock {\em Random Structures Algorithms}, 33(1):1--28, 2008.

\bibitem{keevash-design-1}
P.~Keevash.
\newblock The existence of designs.
\newblock {\em arXiv:1401.3665}, 2014.

\bibitem{Ke2015}
P.~Keevash and R.~Mycroft.
\newblock A geometric theory for hypergraph matching.
\newblock {\em Mem. Amer. Math. Soc.}, 233(1098):vi+95, 2015.

\bibitem{KNRS}
Y.~Kohayakawa, B.~Nagle, V.~R\"{o}dl, and M.~Schacht.
\newblock Weak hypergraph regularity and linear hypergraphs.
\newblock {\em J. Combin. Theory Ser. B}, 100(2):151--160, 2010.

\bibitem{KRS02}
Y.~Kohayakawa, V.~R\"{o}dl, and J.~Skokan.
\newblock Hypergraphs, quasi-randomness, and conditions for regularity.
\newblock {\em J. Combin. Theory Ser. A}, 97(2):307--352, 2002.

\bibitem{Koml1997Blow}
J.~Koml\'{o}s, G.~N. S\'{a}rk\"{o}zy, and E.~Szemer\'{e}di.
\newblock Blow-up lemma.
\newblock {\em Combinatorica}, 17(1):109--123, 1997.

\bibitem{K2010Hamilton}
D.~K\"{u}hn, R.~Mycroft, and D.~Osthus.
\newblock Hamilton $l$-cycles in uniform hypergraphs.
\newblock {\em J. Combin. Theory Ser. A}, 117(7):910--927, 2010.

\bibitem{LenzMubayi_eig}
J.~Lenz and D.~Mubayi.
\newblock Eigenvalues and linear quasirandom hypergraphs.
\newblock {\em Forum Math. Sigma}, 3:e2, 26, 2015.

\bibitem{lenz2015poset}
J.~Lenz and D.~Mubayi.
\newblock The poset of hypergraph quasirandomness.
\newblock {\em Random Structures Algorithms}, 46(4):762--800, 2015.

\bibitem{Lenz2016Perfect}
J.~Lenz and D.~Mubayi.
\newblock Perfect packings in quasirandom hypergraphs {I}.
\newblock {\em J. Combin. Theory Ser. B}, 119:155--177, 2016.

\bibitem{lenz2016perfect-II}
J.~Lenz and D.~Mubayi.
\newblock Perfect packings in quasirandom hypergraphs {II}.
\newblock {\em Combinatorics, Probability and Computing}, 25(4):595--611, 2016.

\bibitem{Lo2015}
A.~Lo and K.~Markstr\"{o}m.
\newblock {$F$}-factors in hypergraphs via absorption.
\newblock {\em Graphs Combin.}, 31(3):679--712, 2015.

\bibitem{Mycroft-par}
R.~Mycroft.
\newblock Packing {$k$}-partite {$k$}-uniform hypergraphs.
\newblock {\em J. Combin. Theory Ser. A}, 138:60--132, 2016.

\bibitem{MR3548293}
C.~Reiher, V.~R\"{o}dl, and M.~Schacht.
\newblock Embedding tetrahedra into quasirandom hypergraphs.
\newblock {\em J. Combin. Theory Ser. B}, 121:229--247, 2016.

\bibitem{Reiher2018Hypergraphs}
C.~Reiher, V.~R\"{o}dl, and M.~Schacht.
\newblock Hypergraphs with vanishing {T}ur\'{a}n density in uniformly dense
  hypergraphs.
\newblock {\em J. Lond. Math. Soc. (2)}, 97(1):77--97, 2018.

\bibitem{Reiher_2017}
C.~Reiher, V.~R\"{o}dl, and M.~Schacht.
\newblock On a generalisation of {M}antel's theorem to uniformly dense
  hypergraphs.
\newblock {\em Int. Math. Res. Not. IMRN}, (16):4899--4941, 2018.

\bibitem{Reiher_2018}
C.~Reiher, V.~R\"{o}dl, and M.~Schacht.
\newblock On a {T}ur\'{a}n problem in weakly quasirandom 3-uniform hypergraphs.
\newblock {\em J. Eur. Math. Soc. (JEMS)}, 20(5):1139--1159, 2018.

\bibitem{reiher2018some}
C.~Reiher, V.~R\"{o}dl, and M.~Schacht.
\newblock Some remarks on $\pi_{\cherry}$.
\newblock {\em Connections in discrete mathematics}, pages 214--239, 2018.

\bibitem{locally-clique}
C.~Reiher and M.~Schacht.
\newblock Clique factors in locally dense graphs.
\newblock {\em Random Structures Algorithms}, 49(4):669--693. Appendix to
  Triangle factors of graphs without large independent sets and of weighted
  graphs by J. Balogh, T. Molla and M.Sharifzadeh, 2016.

\bibitem{R2015A}
V.~R\"{o}dl, A.~Ruci\'{n}ski, and E.~Szemer\'{e}di.
\newblock A {D}irac-type theorem for 3-uniform hypergraphs.
\newblock {\em Combin. Probab. Comput.}, 15(1-2):229--251, 2006.

\bibitem{MR2351689}
V.~R\"{o}dl and M.~Schacht.
\newblock Regular partitions of hypergraphs: counting lemmas.
\newblock {\em Combin. Probab. Comput.}, 16(6):887--901, 2007.

\bibitem{regular-lemmas2007}
V.~R\"{o}dl and M.~Schacht.
\newblock Regular partitions of hypergraphs: regularity lemmas.
\newblock {\em Combin. Probab. Comput.}, 16(6):833--885, 2007.

\bibitem{RSk04}
V.~R\"{o}dl and J.~Skokan.
\newblock Regularity lemma for {$k$}-uniform hypergraphs.
\newblock {\em Random Structures Algorithms}, 25(1):1--42, 2004.

\bibitem{bandwidth}
K.~Staden and A.~Treglown.
\newblock The bandwidth theorem for locally dense graphs.
\newblock {\em Forum Math. Sigma}, 8:Paper No. e42, 36, 2020.

\bibitem{Towsner}
H.~Towsner.
\newblock {$\sigma$}-algebras for quasirandom hypergraphs.
\newblock {\em Random Structures Algorithms}, 50(1):114--139, 2017.

\bibitem{Zhao-survey}
Y.~Zhao.
\newblock Recent advances on {D}irac-type problems for hypergraphs.
\newblock In {\em Recent trends in combinatorics}, volume 159 of {\em IMA Vol.
  Math. Appl.}, pages 145--165.

\end{thebibliography}
\end{document}